\newcounter{mparcnt}
\newtheorem{theorem}{Theorem}[section]
\newtheorem{lemma}[theorem]{Lemma}
\newtheorem{proposition}[theorem]{Proposition}
\newtheorem{definition}[theorem]{Definition}
\newtheorem{corollary}[theorem]{Corollary}
\newtheorem{remark}[theorem]{Remark}
\newcommand{\abs}[1]{\lvert#1\rvert}
\newcommand{\norm}[1]{\lVert#1\rVert}
\newcommand{\rd}{{\rm d}}
\newcommand{\rdV}{{\rm dV}}
\newcommand{\rVol}{{\rm Vol}}
\newcommand{\rid}{{\rm id}}
\newcommand{\D}{{\slashed{D}}}
\newcommand{\rRe}{{\rm Re}}
\newcommand{\rIm}{{\rm Im}}
\newcommand{\curl}{{\rm curl}\,}
\newcommand{\ip}{\lrcorner\,}
\newcommand{\w}{\wedge}
\def\<{\langle}
\def\>{\rangle}
\def\S{\mathbb{S}}
\def\R{\mathbb{R}}
\newcommand{\ra}{\rightarrow}
\def\SS{{\mathbb S}}
\newcommand{\eq}[1]{\begin{equation}\allowdisplaybreaks\begin{alignedat}{2} #1 \end{alignedat}\end{equation}}
\numberwithin{equation} {section}
\begin{document}

	
\title[Conformal properties of the zero mode equation]{
Conformal invariants for the zero mode equation
}
\date{\today}


\author{Guofang Wang}
\address{Albert-Ludwigs-Universit\"at Freiburg,
Mathematisches Institut,
Ernst-Zermelo-Str. 1,
D-79104 Freiburg, Germany}
\email{guofang.wang@math.uni-freiburg.de}

\author{Mingwei Zhang}
\address{ Wuhan University, School of Mathematics and Statistics, 430072 Wuhan, China and 
Albert-Ludwigs-Universit\"at Freiburg,
Mathematisches Institut,
Ernst-Zermelo-Str. 1,
D-79104 Freiburg, Germany}
\email{zhangmwmath@whu.edu.cn}

\begin{abstract}

For non-trivial solutions to the zero mode equation on a closed spin manifold
\[\D \varphi=iA\cdot \varphi,\] we first provide a simple proof for the sharp inequality 
\eq{
\norm{A}_{L^n}^2 \ge \frac {n}{4(n-1)} Y(M,[g]),
} where $Y(M,[g])$ is the Yamabe constant of
$(M,g)$, which was obtained by Frank-Loss \cite{Frank_Loss_2024} and Reu\ss {}  \cite{Reuss25}.
Then we classify completely the equality case by proving  that equality holds if and only if $\varphi$ is a Killing spinor, and if and only if $(M,g)$ is a Sasaki-Einstein manifold with $A$ (up to scaling) as its Reeb field and $\varphi$ a vacuum up
to a conformal transformation. More generalizations have been also studied.


\end{abstract}

\subjclass[2000]{53C27, 53C18, 35Q60}

\keywords{Zero mode, Killing spinor, Dirac operator, Yamabe constant, Sasaki-Einstein}
\maketitle

\section{Introduction}
The zero mode equation 
\eq{\label{eq01}
\sigma\cdot (-i \nabla -A)\varphi =0 \quad \hbox{ in }\R^n
}
 plays an important  role in physics and mathematics. The existence of its  non-trivial solution $(A, \varphi)$ with $\varphi\not \equiv0$, which is called a zero mode,  is crucial for the stability of Coulomb systems with magnetic fields \cite{Froelich-Lieb-Loss}. The first explicit examples were found by Loss-Yau in \cite{Loss_Yau_86} in the 3-dimensional case, which were generalized to the $n$-dimensional case by Dunne-Min \cite{Dunne_Min_08}. Since then there
have been amount of existence results, though in general the existence is rare, see \cites{FL1, Aharonov-Casher19, Balinsky-Evans01, Erdoes-Solovej01, Benguria15, Borrelli21, Elton02, Saito08, Ross-Schroers18, Grosse23}. 

Recently Frank-Loss \cite{Frank_Loss_2024} gave a sharp criterion on its existence of zero modes (with suitable regularity)  by establishing the following sharp inequality
\eq{\label{eq02}
\norm{A}_{L^n}^2 \ge \frac n{n-2} S_n  \quad\hbox{in}\ \mathbb{R}^n
}
with equality if and only if $n$ is odd and $(\varphi,A)$ are solutions given in \cites{Loss_Yau_86, Dunne_Min_08}. Here $S_n$ is the optimal Sobolev constant.
Their result was generalized very recently by J. Reu\ss{} in \cite{Reuss25} to general spin manifolds, on which the zero mode equation \eqref{eq01} is naturally generalized to 
\eq{\label{eq03}
\D \varphi = iA \cdot \varphi,
}
where $\varphi$ is a (complex valued) spinor field, $\cdot$ is the Clifford multiplication, $\D$ is the Dirac operator and $A$ is a vector field on a spin manifold $M$. See a brief introduction in the next section. He proved the following optimal inequality 
\eq{\label{eq04}
\norm{A}_{L^n}^2 \ge \frac {n}{4(n-1)} Y(M,[g]),
} where $Y(M,[g])$ is the Yamabe constant of
$(M,g)$. Equality of \eqref{eq04} is also addressed in \cite{Reuss25}.

The objective of this paper is twofold. We first give a simpler proof of \eqref{eq04},  and then complete the discussion of the equality case.
The main theorem of this paper states
\begin{theorem}
    \label{main_thm}
Let $(M^n,g)$ $(n\geq3)$ be a closed manifold with a spin structure $\sigma$ and let
$(\varphi,A)$ be a non-trivial solution to \eqref{eq03} with $\varphi\in L^p$  $(p>\frac{n}{n-1})$.
Then
\eq{\label{INQ}
\norm{A}_{L^n}^2 \ge \frac {n}{4(n-1)} Y(M,[g]),
}
and equality holds if and only if 
$\varphi$ is a non-parallel real Killing spinor up to a conformal transformation, and if and only if 
$(M,g)$ is a Sasaki-Einstein manifold with $A$ (up to scaling) as its Reeb field  and $\varphi$  a vacuum up to a conformal transformation.  Here  the definition of vacuum is given in Definition \ref{def_vacuum} below.
\end{theorem}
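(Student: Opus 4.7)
I would split the proof into establishing the inequality and then analyzing both equality characterizations, using Hijazi's conformal trick as the main technical tool.

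\emph{Establishing the inequality.} The starting point is the Lichnerowicz identity $\D^{2}\varphi=\nabla^{*}\nabla\varphi+\tfrac{R}{4}\varphi$ together with the algebraic fact $|\D\varphi|^{2}=|A\cdot\varphi|^{2}=|A|^{2}|\varphi|^{2}$, which comes from $A\cdot A\cdot=-|A|^{2}$. Pairing with $\varphi$ and integrating yields
\eq{
\int_{M}|A|^{2}|\varphi|^{2}\,dV=\int_{M}|\nabla\varphi|^{2}+\tfrac{R}{4}|\varphi|^{2}\,dV.
}
H\"older bounds the left-hand side by $\|A\|_{L^{n}}^{2}\|\varphi\|_{L^{2n/(n-2)}}^{2}$. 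For the right-hand side I would follow Hijazi's conformal strategy: rescale by $\tilde g=|\varphi|^{4/(n-1)}g$ on $\{\varphi\neq 0\}$ so that the rescaled spinor has unit pointwise length, rewrite the Weitzenb\"ock integrand in terms of the conformal Laplacian of $|\varphi|$ via the conformal covariance of $\D$, and then invoke the Yamabe inequality. This produces
\eq{
\int_{M}|\nabla\varphi|^{2}+\tfrac{R}{4}|\varphi|^{2}\,dV\ge \tfrac{n}{4(n-1)}\,Y(M,[g])\,\|\varphi\|^{2}_{L^{2n/(n-2)}},
}
and combining with the H\"older bound gives \eqref{INQ}. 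The zero set $\{\varphi=0\}$ is handled via cut-off together with unique continuation for $\D$.

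\emph{Equality characterizations.} Equality in \eqref{INQ} forces simultaneous saturation of H\"older, of Hijazi's inequality, and of the Yamabe inequality applied to $|\varphi|$. The Hijazi equality case is classical: the rescaled spinor is both a twistor spinor and a Dirac eigenspinor, which is equivalent to being a real Killing spinor. Yamabe saturation pins down the conformal representative, and H\"older saturation makes $|A|^{n}$ proportional to $|\varphi|^{2n/(n-2)}$. Altogether, after the conformal change, $\varphi$ solves $\nabla_{X}\varphi=\mu X\cdot\varphi$ for some $\mu\in\R\setminus\{0\}$, which is the first equivalence. For the second equivalence, a non-parallel real Killing spinor forces $(M,g)$ to be Einstein with $R>0$; the bilinear $\xi^{\flat}(X):=-i\langle X\cdot\varphi,\varphi\rangle/|\varphi|^{2}$ then defines a unit Killing vector field $\xi$ whose covariant derivative satisfies the Sasakian integrability conditions (Friedrich, B\"ar, Kim-Semmelmann), so $(M,g,\xi)$ is Sasaki-Einstein with Reeb field $\xi$. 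Substituting the Killing equation into $\D\varphi=iA\cdot\varphi$ gives $A\cdot\varphi=in\mu\varphi$, and the spinorial characterization $\xi\cdot\varphi=\pm i\varphi$ of the Reeb field forces $A$ to be a nonzero constant multiple of $\xi$. The converse directions are direct verifications using the explicit Killing spinors on Sasaki-Einstein manifolds together with the vacuum of Definition~\ref{def_vacuum}.

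\textbf{Main obstacle.} The technically most delicate part is the equality analysis: carrying the three simultaneous equality conditions through the conformal rescaling in the presence of the magnetic term $iA\cdot\varphi$, and arguing via unique continuation that the twistor-type equation on $\{\varphi\neq 0\}$ extends to a genuine real Killing spinor equation on all of $M$ with a globally constant $\mu$. The non-eigenvalue form of the zero mode equation makes this less standard than in Hijazi's original setting, and the identification $A\parallel\xi$ rests on an algebraic lemma about the Clifford action on the one-complex-dimensional space of Killing spinors associated to the Sasaki-Einstein structure.
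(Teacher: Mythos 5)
Your skeleton (Schr\"odinger--Lichnerowicz formula, H\"older against $\|A\|_{L^n}$, a Hijazi-type conformal bound, and in the equality case a Killing spinor producing a Sasaki--Einstein structure via the bilinear $\rIm\<X\cdot\varphi,\varphi\>$) matches the architecture of the paper, but two steps that you treat as routine are precisely where the real work lies, and as written they are gaps. First, the inequality. Your intermediate estimate $\int|\nabla\varphi|^2+\tfrac{R}{4}|\varphi|^2\ \ge\ \tfrac{n}{4(n-1)}Y(M,[g])\,\|\varphi\|_{L^{2n/(n-2)}}^2$ does not follow from Kato plus the Yamabe inequality (that route only yields the constant $\tfrac{n-2}{4(n-1)}$); the improvement to $\tfrac{n}{4(n-1)}$ requires the twistor/refined-Kato structure, i.e.\ the conformal Schr\"odinger--Lichnerowicz identity \eqref{Sch-L} in a metric rescaled by (a power of) $|\varphi|$. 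That identity needs more regularity than $\varphi\in L^p$ ($p>\tfrac n{n-1}$) and degenerates exactly on $\{\varphi=0\}$. Saying the zero set is "handled via cut-off together with unique continuation for $\D$" papers over the central analytic difficulty of the whole problem: unique continuation for $\D-i\lambda A\cdot$ with $A\in L^n$ is itself delicate, and even granting that the zero set is small, smallness does not control the cut-off errors against the degenerate conformal factor $|\varphi|^{4/(n-1)}$. This is why Frank--Loss needed the $\epsilon$-regularization $\sqrt{|\varphi|^2+\epsilon^2}$ "after much effort", and why the paper instead introduces the weighted eigenvalues $\mu_a(g,A)$, proves $\gamma_a([g,A])=Y([g,A])$ for every $a\in(0,1]$ (Proposition \ref{prop7.7}), and closes the argument with the ordinary formula \eqref{Sch-Lich}, plain Kato, and a small $a$, after first bootstrapping $\varphi\in W^{1,2}$ (Lemma \ref{L^r}). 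Your proposal needs a comparable mechanism, or a genuine quantitative treatment of the zero set; as stated it does not have one.

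Second, the equality case. You assert "the rescaled spinor is both a twistor spinor and a Dirac eigenspinor, which is equivalent to being a real Killing spinor", but \eqref{eq03} is not an eigenvalue equation: equality only gives that, after the conformal change, $\varphi$ is a twistor spinor of constant length, hence by Lichnerowicz $\varphi=\varphi_++\varphi_-$ with $\varphi_\pm$ Killing spinors of opposite Killing numbers. Nothing at this stage makes $\varphi$ an eigenspinor of $\D$, and showing that one of $\varphi_\pm$ vanishes is the hardest step of the theorem (nontrivial even on $\SS^n$, as the paper stresses). The paper does it by proving that $\varphi_+$ and $\varphi_-$ are separately zero modes with the same $A$ (Lemma \ref{lem7.11}), deducing that $n$ is odd and $(M,g)$ is Sasaki--Einstein with Reeb field proportional to $A$ (Theorem \ref{rest_of_proof}), and then invoking the uniqueness of the vacuum (Proposition \ref{uniqueness}, Lemma \ref{lem_vacuum}), a Clifford-algebra argument going back to Frank--Loss. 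Your subsequent substitution of the Killing equation into $\D\varphi=iA\cdot\varphi$ therefore starts from an unproved premise. Finally, the converse direction is not a "direct verification using the explicit Killing spinors on Sasaki--Einstein manifolds": non-simply-connected Sasaki--Einstein manifolds need not carry Killing spinors at all (e.g.\ most quotients $\SS^5/\mathbb{Z}_q$), so one must exploit the vacuum hypothesis -- lift to the universal cover, apply Friedrich--Kim to produce a Killing vacuum there, and use uniqueness of the vacuum to conclude $\varphi$ is itself Killing -- before computing equality via Obata's theorem, as in Theorem \ref{thm5.4}.
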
 

In contract to many optimal inequalities for scalar functions, for example the Sobolev inequality and the logarithmic Sobolev inequality, there are only few optimal inequalities for vector valued (or spinor valued) functions. Inequality \ref{eq02} (or \eqref{eq04}) is one of them, together with the Hijazi inequality \cite{H86} and its consequence, the spinorial Sobolev inequality, see \cites{A03, WZ25}. Most of powerful tools for establishing inequalities for scalar functions, for example the rearrangement method and the mass transport method, do not work for vector valued functions.
The main difficulty in the proof of \eqref{eq02} and \eqref{eq04} is the existence of zeros of $\varphi$. If $\varphi$ is nowhere vanishing, inequality \eqref{eq02} follows directly from the Schr\"odinger-Lichnerowicz formula, H\"older's inequality and the optimal Sobolev inequality (See \cite{FL22}). In order to handle this problem, Frank-Loss  used a regularization 
$\abs{\varphi}_\epsilon=\sqrt{\abs{\varphi}^2+\epsilon^2}$ for $\epsilon>0$. After much effort they succeeded to prove inequality \eqref{eq02} by letting $\epsilon\to 0$. Due to the limiting argument, equality  becomes more difficult to deal with. Nevertheless they managed to  prove that $(\varphi,A)$ achieves equality if and only if it is the solution found in \cites{Loss_Yau_86, Dunne_Min_08}. For a general spin manifold, J. Reu\ss{} followed the same strategy to show inequality \eqref{eq04} and proved that equality implies that up to a conformal transformation $(M,g)$ carries a real Killing spinor. And hence the universal covering of  $(M, g)$ must be one of manifolds in the classification of C. B\"ar \cite{Bar1993}, see also \cite{McWang1989}.

Another method to deal with the difficulty of the appearance  of zeros in $\varphi$ was used by  Borrelli-Malchiodi-Wu in \cite{Borrelli21}, where they managed to prove that the zero set is sufficiently small and can be ignored in the application of the Schr\"odinger-Lichnerowicz formula.

Our proof  is inspired by the proof of the Hijazi inequality  in \cite{H86} and the work of J. Julio-Batalla \cite{Jurgen_Julio-Batalla}, as well as Frank-Loss \cite{Frank_Loss_2024}.
The main idea is that we reduce the problem involving the spinor field $\varphi$ in \eqref{eq03}  to a problem involving scalar functions, which is related to a suitable eigenvalue problem; meanwhile we exploit the conformal invariance of the zero mode equation \eqref{eq03}.
Via this method, the zero set of $\varphi$ does not arise any problem.

We actually prove a stronger form of \eqref{eq04} by inserting a new conformal invariant $Y([g,A])$ into the inequality:
\eq{\label{eq05}
\norm{A}_{L^n}^2 \ge \frac {n}{4(n-1)} Y([g,A]) \ge \frac {n}{4(n-1)} Y(M,[g]).
}
Here $Y([g,A])$ is a conformal invariant defined by
\eq{\label{eq06}
Y([g,A]):=\|A\|_{L^n}^{2}
\inf \bigg\{ 
\frac {\int_M uL_gu \,\rd V_g}{
\int_M \abs{A}_g^2 u^2 \,\rd V_g}  \,\bigg |\, u \in W^{1,2} \hbox{ with } \int_M \abs{A}_g^2 u^2 \rd V_g > 0 \bigg \}}
and $Y(M,[g])$ is the classical Yamabe constant defined by 

 \eq{
 Y(M,[g]) =\inf _{0\not\equiv u\in W^{1,2} } \frac {\int_M uL_g u \,\rdV_g}{(\int _M u^{\frac {2n}{n-2}}\rdV_g)^{\frac{n-2}{n}}}.
 }
Moreover, $[g,A]$ is the conformal class of $(g,A)$ defined in Definition \ref{Def2} below and
\eq{\label{eq_Yamabe}
L_g u \coloneqq -\frac{4(n-1)}{n-2}\Delta_g u+{\rm R}_g u}
is the conformal Laplacian. The second inequality follows simply from H\"older's inequality, while
the first inequality in \eqref{eq05}, which is slightly stronger than \eqref{INQ}, 
follows from the following variant of the Schr\"odiner-Lichnerowicz formula
    \eq{\label{Sch-L}
    \int_M u^{-\frac 2 {n-2}}\left(\frac{4(n-1)}n|\D_g \varphi|^2 -u^{-1}L_gu|\varphi|^2\right)\rdV_g=\int_M u^{\frac {2(n-1)}{n-2}} |P_g(u^{-\frac n {n-2}}\varphi)|^2 \rdV_g
    }
    where $P_g$ is the twistor operator, at least if $\varphi$ and $u$ sufficiently regular. For the proof of \eqref{Sch-L}, see for instance \cite{Herzlich-M}. 
    In fact,  if we insert the zero mode $\varphi$ and the first eigenfunction $u$ of the above problem \eqref{eq06} into \eqref{Sch-L}, we will obtain 
    \eq{
    \int_M u^{-\frac 2{n-2}}|A|_g^2|\varphi|^2\left (\frac {4(n-1)}n  -\|A\|_{L^n}^{-2} Y([g, A]) \right) \rdV_g=\int_M u^{\frac {2(n-1)}{n-2}} |P_g(u^{-\frac n {n-2}}\varphi)|^2 \rdV_g\ge 0,
    } and hence
    $\|A\|_{L^n}^2 \ge \frac{n}{4(n-1)} Y([g,A])$.
    Using this approach one avoids the problem arising from the appearance of zeros in $\varphi$, provided that $\varphi$ and $u$ are regular enough. 
    However, in general  we only assume that $\varphi, u \in W^{1,2}$. This is not enough to apply  \eqref{Sch-L}.
   To deal with this problem, one may need to use an appropriate approximation method or a regularization method as in \cite{Frank_Loss_2024}. Here we use another method, which we believe, has its own interest. A similar idea was used in \cite{Jurgen_Julio-Batalla}.
    
To overcome this problem, we consider a deformation of $L_g$ by
\eq{
L^a_g u\coloneqq-a\frac{4(n-1)}{n-2} \Delta_g u+{\rm R}_gu \quad\hbox{for}\quad a\in (0,1],
}
and the corresponding eigenvalue problem
\eq{
\mu_a(g, A)\coloneqq \|A\|_{L^n} ^2
\inf_{0\not \equiv u \in W^{1,2}}\frac {\int (a \frac {4(n-1)}{n-2}\abs{\nabla u}^2 _g+{\rm R}_g u^2 )\rdV_g}{
\int \abs{A}_g^2 u^2\rdV_g}.
}
Define a conformal invariant
\eq{
\gamma_a([g,A]) \coloneqq \sup_{(\tilde g, A_{\tilde g})\in [g,A]} \mu_a (\tilde g, A_{\tilde g}).
}
It is crucial to observe that   $$\gamma_a ([g,A]) =Y([g,A]), \quad \hbox{  for any } a\in (0,1].$$ Then  using only the (ordinary) Schr\"odinger-Lichnerowicz formula \eq{\label{Sch-Lich}
0 = -\frac{n-1}{n}\int \abs{\D\varphi}^2 + \frac{1}{4}\int {\rm R}\abs{\varphi}^2 + \int \abs{P\varphi}^2
}
we  prove $\|A\|_{L^n} ^2 \ge \frac n {4(n-1)} \gamma_a ([g,A])$ for a small $a>0$,
and hence 
$\|A\|_{L^n} ^2 \ge \frac n {4(n-1)} Y([g,A]).$

Now if $(\varphi,A)$ achieves equality, we in fact have two equalities, i.e.
\eq{\label{eq09}
\norm{A}_{L^n}^2 = \frac {n}{4(n-1)} Y([g,A]) = \frac {n}{4(n-1)} Y(M,[g]).
}
Moreover, it is ready to show that $Y([g,A])$ is achieved by a conformal metric $\tilde g$. Then it follows easily that $\varphi$ is a twistor spinor with constant length, up to a conformal transformation. This was also obtained in
\cite{Reuss25} by using a similar method of \cite{Frank_Loss_2024} mentioned above. In principle the first equality  implies that $\varphi$ is a twistor, while the second implies that the length of $A$, and hence the length of $\varphi$, is constant, up to a conformal transformation.

It is a well-known result of Lichnerowicz \cite{Lichnerowicz87} that a twistor spinor $\varphi$ with constant length can be decomposed as a sum of two real Killing spinors $ \varphi=\varphi_++\varphi_-$, where $\varphi_\pm$ is a $\pm b$-Killing spinor for some $b\in \R$.
One main step to find an equivalent condition for equality  is now to  prove further that $\varphi$ itself is in fact a real Killing spinor, i.e., one of $\varphi_\pm$ vanishes.
If using the classification of manifolds with real Killing spinors proved by C. B\"ar \cite{Bar1993},  we know that the universal cover of $(M,g)$ is a Sasaki-Einstein manifold, together with two non-Sasakian exceptions:  $M$ is a 6-dimensional nearly K\"akler non-K\"ahler manifold of constant type one, or $M$ is a 7-dimensional nearly parallel $G_2$
manifold. Hence it remains to exclude these two exceptional cases by using extra information from $A$.
In this paper, we use a more direct method, without using the classification and without the assumption of  simply connectedness. Inspired by Friedrich-Kath \cite{Friedrich-Kath89_JDG}, we show that $n$ must be odd and $(M,g)$ must be a Sasaki-Einstein manifold. Moreover, we proved that both $\varphi_+$ and $\varphi_-$ are also solutions to the zero mode equation \eqref{eq03} with the same $A$, which implies that they are vacuums in the sense of Definition \ref{def_vacuum} below. The vacuum is unique, following an idea in \cite{FL1}. Therefore one of them must be zero and hence $\varphi$ itself is a real Killing spinor and a vacuum. The converse is not difficult to prove.

The rigidity part of Theorem \ref{main_thm} answers the question that when a zero mode $(\varphi,A)$ achieves equality. It is different to ask which manifolds admit a  zero mode achieving equality.
If $(M,g)$ is a simply connected Sasaki-Einstein manifold, by a result of Friedrich-Kath \cite{Friedrich-Kath} there exists a real Killing spinor solving the zero mode equation \eqref{eq03} with $A$ (up to scaling) as its Reeb field. Hence every simply connected Sasaki-Einstein (which is automatically spin) admits a zero mode achieving equality. For a general Sasaki-Einstein manifold,
since $\pi_1(M)$ is finite by Myers' theorem, we can always lift $(M, g)$  to its universal cover, to which the above result can be applied, namely there is a real Killing spinor solving \eqref{eq03}. However it may not descend  to $M$. Examples are given by quotients of $\SS^5$:
among quotients $\SS^5 \slash {\mathbb Z}_q$, where $\mathbb{Z}_q$ acts by sending $(z_1, z_2, z_3)  \to (\xi z_1, \xi z_ 2, \xi z_3)$ with $\xi$ a primitive $q$-th root of unity,  only $\SS^5$  and $\SS^5\slash \mathbb{Z}_3$ admit Killing spinors, see \cite{Sulanke80} and also \cite{Friedrich-Kath89_JDG}. Therefore on other quotients, though they are Sasaki-Einstein manifolds, there is no zero mode achieving equality.
To completely answer the second question one needs to well understand non-simply-connected Sasaki-Einstein manifolds.
We remark that $\SS^5\slash \mathbb{Z}_2$ is even not a spin manifold, while any simply connected Sasaki-Einstein manifold is spin. For Sasakian geometry and Sasaki-Einstein manifolds  we refer to \cite{Boyer_G_book}.

We will also show that the conclusion holds true for $2$-dimensional case as well. In this case the Yamabe constant is equal to $4\pi\chi(\Sigma)$ and equality never occurs.

\begin{theorem}
    \label{main_thm_2_dim}
Let $(\Sigma^2,g)$ be a closed Riemann surface with a spin structure $\sigma$ and let
$(\varphi,A)$ be a non-trivial solution to \eqref{eq03} with $\varphi\in L^2$ and
$A\in L^q$ $(q>2)$.
Then
\eq{
\norm{A}_{L^2}^2 > 2\pi\chi(\Sigma).
}
\end{theorem}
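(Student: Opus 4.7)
The plan is to adapt the strategy behind Theorem \ref{main_thm} to dimension two, where the conformal Yamabe constant of $(\Sigma,[g])$ is (by Gauss--Bonnet) the topological number $4\pi\chi(\Sigma) = \int_\Sigma R_g\,\rd A_g$, independent of the representative $g\in[g]$. The target inequality thus reads $\|A\|_{L^2}^2 \ge 2\pi\chi(\Sigma)$, and I will argue strictness separately.

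First I would record the dim-$2$ variant of the Schr\"odinger--Lichnerowicz identity. Combining the twistorial decomposition $|\nabla\varphi|^2 = |P\varphi|^2 + \tfrac12|\D\varphi|^2$ with the Lichnerowicz formula $\int|\D\varphi|^2 = \int|\nabla\varphi|^2 + \tfrac14\int R|\varphi|^2$, and then substituting $|\D\varphi|^2 = |A|^2|\varphi|^2$ from the zero mode equation, I obtain
\eq{\label{eq:2d-id}
\int_\Sigma |A|^2|\varphi|^2\,\rd A = 2\int_\Sigma |P\varphi|^2\,\rd A + \tfrac12\int_\Sigma R|\varphi|^2\,\rd A.
}
I would then apply \eqref{eq:2d-id} to the conformally related data $(\tilde g,\tilde\varphi,\tilde A)$ with $\tilde g = |\varphi|^2 g$, so that the rescaled spinor satisfies $|\tilde\varphi|_{\tilde g} \equiv 1$ on $\{\varphi \ne 0\}$. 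Since $\|A\|_{L^2}^2$ is conformally invariant in dimension two (viewing $A$ as a $1$-form) and the zero mode equation is itself conformally invariant, the left-hand side of \eqref{eq:2d-id} written in $\tilde g$ equals $\|A\|_{L^2(g)}^2$. For the curvature term one uses the dim-$2$ transformation $\tilde R\,\rd V_{\tilde g} = (R - 2\Delta_g\log|\varphi|^2)\,\rd A_g$, together with the distributional identity $\int_\Sigma \Delta_g\log|\varphi|^2\,\rd A = 0$ on a closed surface (the delta contributions at the isolated zeros of $\varphi$ cancel the integral of the smooth part of $\Delta\log|\varphi|^2$). Combining these,
\eq{
\|A\|_{L^2}^2 = 2\int_\Sigma |\tilde P\tilde\varphi|_{\tilde g}^2\,\rd V_{\tilde g} + 2\pi\chi(\Sigma) \ge 2\pi\chi(\Sigma).
}

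The main technical hurdle is that $|\varphi|^2$ may vanish, so the conformal factor in $\tilde g = |\varphi|^2 g$ is singular on the zero set of $\varphi$. I would resolve this either by approximation --- replace $|\varphi|^2$ by $|\varphi|^2 + \varepsilon$, carry out the argument in the smooth metric $g_\varepsilon = (|\varphi|^2+\varepsilon) g$, and let $\varepsilon \to 0$, using the continuity of $\varphi$ that follows from elliptic regularity for the Dirac equation with $A \in L^q$, $q > 2$ --- or by testing \eqref{eq:2d-id} against the function $u = |\varphi|^2$ in a suitable weak formulation, which is legitimate because the isolated zeros of $\varphi$ are of finite order.

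Finally, for the strict inequality, equality above forces $\tilde P\tilde\varphi \equiv 0$, so that $\tilde\varphi$ is a twistor spinor of constant length on $(\Sigma,\tilde g)$. On a closed Riemann surface such a spinor is either parallel --- in which case $\tilde g$ must be flat, $\Sigma$ is a torus, and the equation $0 = \tilde\D\tilde\varphi = i\tilde A\cdot\tilde\varphi$ with $\tilde\varphi \ne 0$ forces $\tilde A \equiv 0$; this degenerate case is excluded by the standing nontriviality of $A$ --- or a real Killing spinor on a round $\SS^2$, in which case the zero mode equation $\tilde\D\tilde\varphi = i\tilde A\cdot\tilde\varphi$ forces $|\tilde A|$ to be a positive constant on $\SS^2$, contradicting the Poincar\'e--Hopf theorem. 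The technical obstacle is the approximation at the zeros of $\varphi$ in the conformal change, while the conceptual obstacle is the rigidity argument ruling out equality in the dim-$2$ setting.
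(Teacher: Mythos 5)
Your integrated identity $\int|A|^2|\varphi|^2=2\int|P\varphi|^2+\tfrac12\int R|\varphi|^2$ is the same starting point as the paper's, and the final Poincar\'e--Hopf obstruction is also the paper's closing move; but the central step of your argument --- rewriting that identity in the degenerate metric $\tilde g=|\varphi|^2g$ (incidentally, to get $|\tilde\varphi|_{\tilde g}\equiv1$ with the paper's conventions the factor must be $|\varphi|^4$, not $|\varphi|^2$) --- is exactly the step you do not carry out, and it is the whole difficulty of the problem. On a topological sphere $\varphi$ will in general vanish; at a zero of order $k$ the metric $|\varphi|^4g$ has a conical singularity of angle $2\pi(1+2k)$, and the Gauss--Bonnet count for the smooth part of $K_{\tilde g}\,\rd V_{\tilde g}$ is $2\pi\chi+4\pi\sum_ik_i$, not $2\pi\chi$. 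Your appeal to the distributional identity $\int\Delta_g\log|\varphi|^2=0$ to keep the value $2\pi\chi$ presupposes that the integrated Lichnerowicz/twistor identity itself survives the singular conformal change with no concentrated contribution from $|\nabla\tilde\varphi|^2$ or $|\tilde P\tilde\varphi|^2$ at the zeros; that is precisely what has to be proved, and it is what the Frank--Loss $\epsilon$-regularization (which you invoke in one sentence, ``let $\varepsilon\to0$'') spends most of its effort controlling. Moreover, with only $A\in L^q$, $q>2$, the isolatedness and finite order of the zero set of $\varphi$ is itself a claim requiring proof, not a given. So the main analytic step of the proposal is a genuine gap.

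The paper's proof is designed to avoid this entirely by putting the conformal weight on $A$ rather than on $\varphi$: after normalizing $\|A\|_{L^2}=1$, Gauss--Bonnet guarantees a (weak) solution $u_0$ of the linear equation $-\Delta_gu_0+K_g=4\pi|A|_g^2$, and in the smooth metric $\tilde g=e^{2u_0}g$ one has $K_{\tilde g}=4\pi|A_{\tilde g}|_{\tilde g}^2$; then the integrated Schr\"odinger--Lichnerowicz formula together with the trivial estimate $\int|\nabla\varphi|^2\ge\tfrac12\int|\D\varphi|^2$, applied to $\varphi$ itself, gives $\lambda^2\int|A|^2|\varphi|^2\ge4\pi\int|A|^2|\varphi|^2$, with no singular metric and no information about the zero set needed. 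Your rigidity step also leans on an unproved dichotomy (a constant-length twistor spinor on a closed surface is parallel or a Killing spinor on a round sphere), applied on top of the singular metric; by contrast, in the paper's gauge equality forces $\varphi$ to be a twistor spinor of constant nonzero length, after which the explicit formula $A=-\lambda^{-1}|\varphi|^{-2}\rIm\<e_j\cdot\D\varphi,\varphi\>e_j$ shows $A$ is smooth and nowhere vanishing, which is impossible on a topological sphere. I would rework your argument along these lines: choose the conformal factor by solving the prescribed-curvature equation for $|A|^2$, not by normalizing $|\varphi|$.
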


This is a B\"ar type inequality (see \cite{Bar1992}), while Theorem \ref{main_thm} is a Hijazi type result (see \cite{H86}). See also the recent work of Julio-Batalla  \cite{Jurgen_Julio-Batalla}.

Our approach is more flexible and can be used to deal with more general cases, which will be studied in Section \ref{sec5} below. 

Concerning the regularity of $\psi$, the most interesting case is $\psi\in L^{\frac {2n}{n-1}}$. Together with $A\in L^{n}$, \eqref{eq03} implies $\D\psi \in L^{\frac {2n}{n+1}}$.  It is remarkable that these three norms are conformally invariant. Hence under the assumption of  boundedness of these norms, the corresponding problems on $\R^n$, $\mathbb{H}^n$ and $\SS^n$
are in fact equivalent, thanks to the conformal invariance of \eqref{eq03}. Analytically it is interesting to consider a more general case: $\varphi \in L^p$  for some $p> n\slash (n-1)$. It was proved in \cites{FL1, Frank_Loss_2024} that the assumption $\varphi \in L^p$ with $p> n\slash (n-1)$ actually implies $\varphi \in L^r$ for any  $r> n\slash (n-1)$ for $M=\R^n$. In the Appendix we mimic their proof for a general $M$. The case $p=n\slash (n-1)$ is a critical case. It is a very challenging problem that if  any solution $\varphi$  of the zero mode equation with  $\varphi \in L^{ n\slash (n-1)}$ and $A\in L^n$ satisfies $\varphi \in L^p$ for any $p\ge n\slash (n-1)$. Very recently, De Lio-Riviere-Wettstein  \cite{Riviere24} gave an affirmative answer for lower dimensions, with a slightly different condition.

The study of  zero modes attracts more and more attention of mathematicians. 
Equation \eqref{eq03} itself  relates  a spinor field $\varphi$ and a vector field $A$ (or its dual, a one form). 
It is interesting to see that inequality 
\eqref{eq04} provides an optimal inequality relating a conformal norm ($L^n$-norm) for $A$ and a conformal invariant, the Yamabe invariant, which is actually the best Sobolev constant for a given conformal class $(M,[g])$,
and opens many interesting questions in \cites{FL1, FL22, Frank_Loss_2024} concerning optimal inequalities of Sobolev type for vectorial, spinorial and form valued  functions. See also our recent work \cites{WZ25, WZ25c}, motivated by \cites{FL1, FL22, Frank_Loss_2024} .

\

\noindent{\it Organization of the rest of the paper}. In Section 2 we review briefly spin geometry, the Dirac operator, and the zero mode equation together with its conformal invariance. A family of conformal invariants is introduced in Section 3, with which we provide the proof of inequality \eqref{eq04} and a part of the equality case. Its rest part is given in Section 4, together with a discussion on the relation between zero modes and Sasaki-Einstein manifolds.
Section 5 devotes to a more general equation.  We ask questions for a related conformal invariant in Section 6. In the Appendix we provide the proof for the regularity for the completeness.

\section{The zero mode equation}\label{Section_the_zero_mode_equation}

\subsection{Spin manifolds and the Dirac operator}

In this subsection we recall some basics about spin manifolds and the Dirac operator. For general information about spin geometry and the Dirac operator, we refer to \cites{Lawson, Baum_Book_90, Friedrich_Book, Ginoux09}.

Let $(M,g)$ be an orientable Riemannian manifold of dimension $n\geq 2$. Over $M$ one can define an ${\rm SO}(n)$-principle bundle $P_{{\rm SO}(n)}M$ with fibres being oriented orthonormal bases. We call $M$ a spin manifold if $P_{{\rm SO}(n)}M$ can be two-fold lifted up to $P_{{\rm Spin}(n)}M$, where the Lie group ${\rm Spin}(n)$ is the simply connected two-fold cover of ${\rm SO}(n)$. The cover $\sigma: P_{{\rm Spin}(n)}M \ra P_{{\rm SO}(n)}M$ is called a spin structure. We only consider spin manifolds in this paper. It is well known that $M$ is spin if and only if the second Stiefel-Whitney class of $M$ vanishes. In particular, $\S^n$ is a spin manifold.

We denote by $\Sigma M$ the associated complex vector bundle of the principle bundle $P_{{\rm Spin}(n)}M$, which has complex rank $2^{[\frac n2]}$. The Riemannian metric $g$ on $M$ endows a canonical Hermitian metric and the associated spin connection on $\Sigma M$.
We denote by $\<\cdot,\cdot\>$ the Hermitian metric and by $\nabla$ the spin connection
The same notations are also used for vector fields, with the meaning clear from context.
For any $L^2$-self-adjoint operator $T$, for instance $T=\D, \nabla^*\nabla, \Delta$, etc., note that $\int\<T\psi,\psi\>=\int\rRe\<T\psi,\psi\>$. In the sequel, we omit to emphasize $\rRe$ if not necessary.

A section of $\Sigma M$ is called a \textit{spinor field}, often denoted by $\psi,\varphi$, etc. Vector fields act on spinor fields by $\gamma: TM \ra {\rm End}_{\mathbb{C}}(\Sigma M)$. For short we use the notation $X\cdot\psi\coloneqq \gamma(X)(\psi)$. The action is anti-symmetric with respect to $\<\cdot,\cdot\>$ and obeys the so-called Clifford multiplication rule $X\cdot Y\cdot \psi + Y\cdot X\cdot \psi = -2 g(X,Y)\psi$.

Let $\{e_{j}\}_{j=1}^{n}$ be an orthonormal frame of $M$. The Dirac operator $\D:\Gamma(\Sigma M) \ra \Gamma(\Sigma M)$ is locally defined by
\eq{
    \D\psi \coloneqq \sum_{j=1}^{n} e_{j}\cdot \nabla_{e_{j}}\psi, \quad\forall \,\psi\in\Gamma(\Sigma M).
}
It is well known that $\D$ is a first-order elliptic self-adjoint operator, which plays the role as ``square root'' of Laplacian through the famous Schr\"odinger-Lichnerowicz formula 
\eq{\label{Bochner}
    \D^2 = -\Delta + \frac{{\rm R}}{4},
}
where ${\rm R}$ is the scalar curvature. A class of special spinor fields, \textit{Killing spinors}, is defined by the following equation
\eq{
    \nabla_{X}\psi = b X\cdot \psi, \quad \forall \,X\in\Gamma(TM),
}
where $b\in\mathbb{C}$ is a constant and is called the Killing number.
In particular, $\psi$ is called a real Killing spinor if $b\in\mathbb{R}$.
Existence of a non-trivial Killing spinor is a demanding requirement of manifold.
Complete simply connected Riemannian spin manifolds  carrying non-zero space of $b$-Killing spinors have been classified by M. Wang \cite{McWang1989} for $b=0$, C. B\"ar \cite{Bar1993} for $b\in\mathbb{R}\backslash\{0\}$ and H. Baum, etc. \cite{Baum_Book_90} for $b\in i\mathbb{R}\backslash\{0\}$, respectively.

Since in the paper the Killing spinors are all non-parallel real Killing spinor, for convenience we always assume $b=\pm \frac 12 $, if no confusion.

\subsection{The zero mode equation}\label{sec2.2}

 On a closed spin manifold $(M^n,g)$ $(n\geq2)$ we consider the following zero mode  equation
 \eq{\label{eq1.0}
 \D \varphi=i A\cdot \varphi,}
 where $i$ is the imaginary unit.
We start with the following fact.

\begin{lemma}[Gauge invariance]
\label{lem2.1} Equation \eqref{eq1.0} is gauge invariant, 
i.e., it is invariant under transformations
\[
(A, \varphi) \mapsto  (A+\nabla f, e^{if}\varphi) \quad \hbox{for any function} \ f.
\] \end{lemma}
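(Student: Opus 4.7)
The plan is to reduce the statement to the Leibniz rule for the Dirac operator acting on a scalar multiple of a spinor: for any smooth complex-valued function $h$ and any spinor field $\psi$, one has $\D(h\psi) = \nabla h \cdot \psi + h\,\D\psi$. This is immediate from $\D = \sum_j e_j\cdot\nabla_{e_j}$, the fact that the spin connection is a module derivation over scalar multiplication, and the definition of the gradient $\nabla h = \sum_j e_j(h)\,e_j$.

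With this in hand, the verification is a direct computation. Assuming $\D\varphi = iA\cdot\varphi$, I apply the Leibniz rule with $h = e^{if}$ and use $\nabla(e^{if}) = i e^{if} \nabla f$:
\[
\D(e^{if}\varphi) \;=\; \nabla(e^{if})\cdot\varphi + e^{if}\D\varphi \;=\; ie^{if}\,\nabla f \cdot \varphi + ie^{if}\,A\cdot\varphi.
\]
Since Clifford multiplication is $\mathbb{C}$-linear in the spinor argument, the scalar $e^{if}$ commutes past the dot, and the right-hand side collapses to $i(A+\nabla f)\cdot(e^{if}\varphi)$. This is precisely \eqref{eq1.0} for the transformed pair $(A+\nabla f,\, e^{if}\varphi)$, establishing the claim.

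There is no real obstacle: the only point meriting a brief remark is that $\mathbb{C}$-linearity of Clifford multiplication (needed to move the scalar $e^{if}$ through the dot) is built into $\Sigma M$ as a complex vector bundle on which $TM$ acts by $\mathbb{C}$-linear endomorphisms, and that the transformation is involutive under $f \mapsto -f$, so the correspondence is a genuine symmetry of the solution set.
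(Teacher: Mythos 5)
Your proof is correct and follows essentially the same route as the paper: both reduce the claim to the Leibniz rule $\D(h\varphi)=h\,\D\varphi+\rd h\cdot\varphi$ applied with $h=e^{if}$, using $\nabla(e^{if})=ie^{if}\nabla f$ and the $\mathbb{C}$-linearity of Clifford multiplication. No gaps; the extra remarks on linearity and involutivity are harmless additions.
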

\begin{proof}
In fact, using the following formula
\eq{
\D(h\varphi) = h\D\varphi + \rd h \cdot\varphi \quad \hbox{for any function} \ h \ \hbox{and any spinor field} \ \varphi,
}
we have
\eq{
\D (e^{if}\varphi) = e^{if} i A\cdot \varphi + e^{if} i \nabla f \cdot \varphi = i(A +\nabla f) \cdot (e^{if} \varphi).
}

\end{proof}
From Lemma \ref{lem2.1} one can see easily that \eqref{eq04}
is equivalent to
\eq{
\inf _f \norm{A-\nabla f}_{L^n}^2 \ge \frac {n}{4(n-1)} Y(M,[g]).
}

 Since we are interested in finding a lower bound of $\int \abs{A}^n$, without loss of generality, we may assume that $
 \int \abs{A}^n<\infty.$ 
 It is convenient to consider the following equivalent problem
 \eq{\label{eq1.1}
 \D \varphi = i\lambda A\cdot \varphi}
 for a vector field $A\in L^n$ with $\int \abs{A}^n =1$, $\lambda>0$ and $\varphi\not\equiv0$. Then the desired inequality  can be written as
 \eq{\label{ineq0}  \lambda^2 \ge\frac n {4(n-1)} Y(M, [g]).}

     We recall important conformal properties of the Dirac operator.
     Let $\tilde{g}=h^{\frac{4}{n-2}}g$. The isometry $(TM,g) \to (TM, \tilde{g})$ with $X\mapsto \tilde{X}=h^{-\frac{2}{n-2}}X$ induces a canonical isometry $F:(\Sigma M,g) \to (\Sigma M, \tilde{g})$ with $\varphi \mapsto \tilde{\varphi}$. It preserves the Hermitian metric on spinor bundle and the induced spin representation $\tilde{\cdot}$ satisfies $\widetilde{X\cdot\varphi} = \tilde{X}\,\tilde{\cdot}\,\tilde{\varphi}$. The induced Dirac operator $\D_{\tilde{g}}$ satisfies the well-known property (see for instance \cite{Ginoux09}*{Proposition 1.3.10})
     \eq{\label{conf_trans}
     \D_{\tilde{g}}(h^{-\frac{n-1}{n-2}}\tilde{\varphi}) = h^{-\frac{n+1}{n-2}}\widetilde{\D_g\varphi}.
     }

\begin{definition}\label{Def2}
The conformal class of $(g, A)$ is defined by
\eq{
[g,A] \coloneqq \{(\tilde g, A_{\tilde g}) \,:\, \tilde g=h^{\frac 4 {n-2}}g,\ A_{\tilde g}=h^{-\frac 4 {n-2}}A \ \hbox{for some positive function}\ h\}.
}

\end{definition}   
\begin{lemma}[Conformal invariance]\label{conformal_invariance}
     Equation \eqref{eq1.1} is conformally invariant in the following sense: Let $(\varphi,A)$ be a solution to \eqref{eq1.1} w.r.t. metric $g$ and
     $\tilde g =h^{\frac 4{n-2}} g$.
     Define \eq{\varphi_{\tilde{g}} \coloneqq h^{-\frac{n-1}{n-2}}\tilde{\varphi}, \qquad A_{\tilde{g}} \coloneqq h^{-\frac{2}{n-2}}\tilde{A} = h^{-\frac{4}{n-2}}A.} Then $(\varphi_{\tilde g},A_{\tilde g})$ is a solution to \eqref{eq1.1} w.r.t. metric $\tilde{g}$, i.e.
     \eq{
\D_{\tilde{g}}\varphi_{\tilde{g}} = i\lambda  A_{\tilde{g}}\,\tilde\cdot\,\varphi_{\tilde{g}},
     }
     with the same $\lambda$.
     \end{lemma}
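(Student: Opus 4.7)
The plan is to perform a direct substitution using the conformal transformation law \eqref{conf_trans} of the Dirac operator together with the compatibility of Clifford multiplication under the canonical isometry $(TM,g) \to (TM, \tilde g)$. Since \eqref{eq1.1} is an algebraic relation between $\D \varphi$ and $A \cdot \varphi$, the entire argument amounts to tracking the exponents of the conformal factor $h$ on both sides.

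First, I would apply \eqref{conf_trans} directly to $\varphi_{\tilde g} = h^{-(n-1)/(n-2)} \tilde\varphi$ to obtain
\[
\D_{\tilde g} \varphi_{\tilde g} = h^{-(n+1)/(n-2)} \, \widetilde{\D_g \varphi}.
\]
Next, since $\varphi$ satisfies \eqref{eq1.1} with respect to $g$, I substitute $\D_g \varphi = i\lambda A \cdot \varphi$ and use the rule $\widetilde{X \cdot \psi} = \tilde X \, \tilde\cdot \, \tilde\psi$, which gives
\[
\D_{\tilde g} \varphi_{\tilde g} = i\lambda \, h^{-(n+1)/(n-2)} \, \tilde A \, \tilde\cdot \, \tilde\varphi.
\]
Finally, expanding the right-hand side of the asserted identity,
\[
A_{\tilde g} \, \tilde\cdot \, \varphi_{\tilde g} = \bigl( h^{-2/(n-2)} \tilde A \bigr) \, \tilde\cdot \, \bigl( h^{-(n-1)/(n-2)} \tilde\varphi \bigr) = h^{-(n+1)/(n-2)} \, \tilde A \, \tilde\cdot \, \tilde\varphi,
\]
which matches the preceding display, so $\D_{\tilde g} \varphi_{\tilde g} = i\lambda A_{\tilde g} \, \tilde\cdot \, \varphi_{\tilde g}$ with the same $\lambda$.

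The only subtlety, and the closest thing to an obstacle, will be keeping straight the different incarnations of $A$: the original vector field associated with $g$, its image $\tilde A = h^{-2/(n-2)} A$ under the canonical $g$-to-$\tilde g$ isometry of $TM$, and the rescaled field $A_{\tilde g} = h^{-4/(n-2)} A$ prescribed in Definition \ref{Def2}. Once this bookkeeping is fixed, the exponent identity $-(n-1)/(n-2) - 2/(n-2) = -(n+1)/(n-2)$ yields the match automatically, and $\lambda$ is preserved because no power of $h$ ever multiplies it during the computation.
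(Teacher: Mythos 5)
Your proof is correct and follows essentially the same route as the paper: apply the conformal transformation law \eqref{conf_trans}, substitute the zero mode equation, use $\widetilde{X\cdot\psi}=\tilde X\,\tilde\cdot\,\tilde\psi$, and match powers of $h$. The only thing the paper adds beyond this is the one-line check that $\int \abs{A_{\tilde g}}^n_{\tilde g}\,\rd V_{\tilde g}=\int\abs{A}^n_g\,\rd V_g=1$, so the normalization built into \eqref{eq1.1} is also preserved; you may wish to record that as well.
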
 
     
     \begin{proof}
    In fact,  \eqref{conf_trans} and \eqref{eq1.1} imply
    \eq{
    \D_{\tilde{g}}\varphi_{\tilde{g}} &= h^{-\frac{n+1}{n-2}}\widetilde{\D_g\varphi} = i\lambda h^{-\frac{n+1}{n-2}}\widetilde{A\cdot\varphi} = i\lambda h^{-\frac{n+1}{n-2}}\tilde{A}\,\tilde{\cdot}\,\tilde{\varphi}\\
    &= i\lambda h^{-\frac{n+1}{n-2}}\cdot h^{\frac{2}{n-2}}A_{\tilde{g}}\,\tilde{\cdot}\,h^{\frac{n-1}{n-2}}\varphi_{\tilde{g}} = i\lambda A_{\tilde{g}}\,\tilde{\cdot}\,\varphi_{\tilde{g}}.
    }
    Moreover, we have
     \eq{\label{A}
   \abs{A_{\tilde{g}}}_{\tilde g}  =h^{\frac{2}{n-2}} \abs{h^{-\frac{4}{n-2}}A}_g   =h^{-\frac {2}{n-2}}\abs{A}_g , 
    } and hence
    \eq{
    \int \abs{A_{\tilde{g}}}^n_{\tilde g}  \rd V_{\tilde{g}} = \int \big( h^{-\frac{2}{n-2}} \abs{A}_g \big)^n \cdot h^{\frac{2n}{n-2}}\, \rd V_g = \int \abs{A}_g^n \rd V_g = 1.
    } 
    \end{proof}
Therefore, if one wants to estimate $\lambda$ like in this paper, one can choice a suitable conformal metric. 
\subsection{The Yamabe invariant}
 The celebrated Yamabe constant is defined by 
\eq{\label{Yamabe_constant}
 Y(M,[g]) =\inf _{0\not\equiv u\in W^{1,2} } \frac {\int_M uL_g u \,\rdV_g}{(\int _M u^{\frac {2n}{n-2}}\rdV_g)^{\frac{n-2}{n}}},
 }
where $L_g$ is the conformal Laplacian given in \eqref{eq_Yamabe}. A conformal metric $\tilde g= u^{\frac {4}{n-2}}g$ 
has scalar curvature
\eq{\label{scalar_curvature}
R_{\tilde g}=u^{-\frac {n+2}{n-2}} L_g u  = u^{-\frac {n+2}{n-2}} \left\{-\frac{4(n-1)}{n-2}\Delta_g u+{\rm R}_g u\right\}.
}
By the resolution of the  Yamabe  problem through Yamabe, Trudinger, Aubin and Schoen, the infimum  in \eqref{Yamabe_constant} is always achieved by a conformal metric $\tilde g$, which is usually called a Yamabe minimizer. For the Yamabe problem, see a nice Survey \cite{Lee_Parker}. If $(M,g)$ is Einstein, by Obata \cite{Obata71}, other conformal metrics $\tilde g$ of constant scalar curvature are also Einstein. If in addition $(M,g)$ is not conformally equivalent to $\SS^n$, then $\tilde g =cg$ for some constant $c>0$ and hence  any conformal metric of constant scalar curvature is a Yamabe minimizer.
It is easy to check that the Yamabe constant of $\SS^n$ is
\eq{
Y(\SS^n)=\frac{4(n-1)}{n-2} S_n = n(n-1)\omega_n^{\frac{2}{n}}.
}

\section{A family of conformal invariants}
\label{sec3}

Let us introduce a family of eigenvalue problems, from which we then define a family of conformal invariants. It is interesting to observe that these invariants turn out to be identical, see Proposition \ref{cor7.9} below.

Let $a\in (0, 1]$. We consider the following operator
\eq{
L^a_g \coloneqq -a \frac {4(n-1)}{n-2}\Delta_g + {\rm R}_g
}
and its eigenvalue problem w.r.t. the weight $\abs{A}_g^2$, i.e.
\eq{\label{def_mu_a}
\mu_a(g, A)\coloneqq
\inf \bigg\{ \frac {\int (a \frac {4(n-1)}{n-2}\abs{\nabla u}^2 _g+{\rm R}_g u^2 )\rd V_g}{
\int \abs{A}_g^2 u^2\rd V_g}  \,\bigg |\, u \in W^{1,2} \hbox{ with } \int_M \abs{A}_g^2 u^2 \rd V_g > 0 \bigg \},}
which implies
\eq{\label{eig}
{\int (a \frac {4(n-1)}{n-2}\abs{\nabla u}^2 _g+{\rm R}_g u^2 )\rd V_g} \ge \mu_a (g, A) 
\int \abs{A}_g^2 u^2\rd V_g, \quad \forall \, u\in W^{1,2}.
}
The first positive eigenfunction $u\in W^{1,2}$ satisfies
\eq{\label{eigen_problem}
L_g^a u =-a\Delta_g u+R_g u= \mu_a(g,A)\abs{A}_g^2 u.
}

\begin{definition}\label{def_gamma_a}
For any given $a\in (0, 1]$, we define a conformal invariant for the class $[g,A]$
\eq{
\gamma_a([g,A]) \coloneqq \sup_{(\tilde g, A_{\tilde g})\in [g,A]} \mu_a (\tilde g, A_{\tilde g}).
}
    
\end{definition}

We will prove that  all $\gamma_a$'s are actually the same.
First of all, we observe that the case $a=1$ is special, since $L_g:=L^{1}_g$ is the conformal Laplacian, or the conformal Yamabe operator.
It is well-known that for $\tilde g =h^{\frac 4 {n-2}}$ we have
\eq{
L_{\tilde{g}}u = h^{-\frac{n+2}{n-2}}L_g(hu), \qquad
{\rm R}_{\tilde{g}} = h^{-\frac{n+2}{n-2}}L_gh.
}
Set
\eq{
I_{g, A}(u) \coloneqq \frac{\int u \, L_gu \,\rd V_g}{\int |A|_g^2 \, u^2 \,\rd V_g}.
}

\begin{definition} \label{def_I_gA}
We define $Y(g,A) \coloneqq \mu_{1}(g,A)$, i.e.,
\eq{
Y(g, A) \coloneqq \inf_{0\not\equiv u\in W^{1,2}} I_{g, A} (u).
}
    
\end{definition}
It is clear that $Y(g, A)$ is the first eigenvalue of the conformal Laplacian $L_g$ with respect to the weight $|A|^2_g$ and is achieved by a positive function $u_0$.

\begin{lemma}\label{conformal_1} $I_{g,A}$ is conformal invariant in the sense that
    $I_{\tilde{g},A_{_{\tilde{g}}}}(u) = I_{g,A}(hu)$ for any $\tilde g =h^{\frac 4{n-2}}g$.
\end{lemma}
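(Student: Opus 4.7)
The plan is to verify the identity by a direct computation, substituting the conformal transformation rules for each quantity appearing in $I_{\tilde g, A_{\tilde g}}(u)$ and reorganizing the result so that $hu$ plays the role of the test function for $I_{g,A}$. All the transformation laws needed are already collected in the excerpt: the volume element satisfies $\rd V_{\tilde g} = h^{\frac{2n}{n-2}} \rd V_g$, the weight satisfies $\abs{A_{\tilde g}}_{\tilde g} = h^{-\frac{2}{n-2}} \abs{A}_g$ by \eqref{A}, and the conformal Laplacian obeys $L_{\tilde g} u = h^{-\frac{n+2}{n-2}} L_g(hu)$.

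The first step will be to treat the numerator of $I_{\tilde g, A_{\tilde g}}(u)$. Substituting the transformation of $L_{\tilde g}$ and $\rd V_{\tilde g}$ yields
\eq{
\int_M u \, L_{\tilde g} u \, \rd V_{\tilde g} = \int_M u \cdot h^{-\frac{n+2}{n-2}} L_g(hu) \cdot h^{\frac{2n}{n-2}} \rd V_g = \int_M (hu) L_g(hu) \, \rd V_g,
}
since the exponents $-\tfrac{n+2}{n-2}$ and $\tfrac{2n}{n-2}$ add to $1$. The second step does the same for the denominator: the factors $h^{-\frac{4}{n-2}}$ (from $\abs{A_{\tilde g}}_{\tilde g}^2$) and $h^{\frac{2n}{n-2}}$ (from $\rd V_{\tilde g}$) combine to give exactly $h^2$, so
\eq{
\int_M \abs{A_{\tilde g}}_{\tilde g}^2 \, u^2 \, \rd V_{\tilde g} = \int_M \abs{A}_g^2 \, (hu)^2 \, \rd V_g.
}
Dividing the two identities immediately gives $I_{\tilde g, A_{\tilde g}}(u) = I_{g,A}(hu)$.

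There is no real obstacle; the whole statement is a bookkeeping exercise with conformal weights. The only thing to be mindful of is that the cited transformation law for the conformal Laplacian is exactly the one that makes the extra power of $h$ in the numerator collapse to give $hu \cdot L_g(hu)$, rather than some weighted variant; this is precisely what earns $L_g$ the name of conformal Laplacian. The lemma then serves two conceptual purposes later on: first, it reduces taking the infimum of $I_{\tilde g, A_{\tilde g}}$ over $u$ to taking the infimum of $I_{g,A}$ over $hu$, so $Y(g,A)$ is invariant under the equivalence relation defining $[g,A]$; second, combined with Definition \ref{def_gamma_a}, it shows that $\gamma_1([g,A]) = Y(g,A) = Y([g,A])$, a fact that will be used when comparing the various $\gamma_a$'s.
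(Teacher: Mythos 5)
Your proof is correct and is essentially identical to the paper's: both substitute the transformation laws $L_{\tilde g}u = h^{-\frac{n+2}{n-2}}L_g(hu)$, $\rd V_{\tilde g} = h^{\frac{2n}{n-2}}\rd V_g$ and $\abs{A_{\tilde g}}_{\tilde g} = h^{-\frac{2}{n-2}}\abs{A}_g$ into numerator and denominator and check that the powers of $h$ collapse to give $I_{g,A}(hu)$.
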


\begin{proof}
    From the above discussion we have
    \eq{
    I_{\tilde{g},A_{_{\tilde{g}}}}(u) = \frac{\int u \, L_{\tilde{g}}u \,\rd V_{\tilde{g}}}{\int |A_{\tilde{g}}|_{\tilde{g}}^2 \, u^2 \,\rd V_{\tilde{g}}} = \frac{\int h^{-1}(hu) \, h^{-\frac{n+2}{n-2}} L_{g}(hu) h^{\frac{2n}{n-2}} \,\rd V_{g}}{\int h^{-\frac{4}{n-2}} \abs{A}_{g}^2 \, h^{-2} (hu)^2 h^{\frac{2n}{n-2}} \,\rd V_{g}} = I_{g,A}(hu).
    }
\end{proof}

Lemma \ref{conformal_1} has a direct consequence, which shows that $Y(g,A)$ is a conformal invariant in the following sense.
\begin{corollary}\label{coro7.6}
    $Y(\tilde{g},A_{\tilde{g}}) = Y(g,A)=\mu_{1}(g,A)$.
\end{corollary}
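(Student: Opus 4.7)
The plan is to deduce this essentially directly from Lemma \ref{conformal_1}. The second equality $Y(g,A)=\mu_{1}(g,A)$ is nothing but Definition \ref{def_I_gA}, so the content is the conformal invariance $Y(\tilde g, A_{\tilde g})=Y(g,A)$. Since $Y(g,A)$ is defined as the infimum of $I_{g,A}$ over nonzero $W^{1,2}$ functions with $\int |A|_g^2 u^2 \, \rd V_g>0$, it suffices to show that the map $u\mapsto hu$ is a bijection of the admissible set for $(\tilde g, A_{\tilde g})$ onto the admissible set for $(g,A)$, and then quote Lemma \ref{conformal_1}.

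First I would verify that $u\in W^{1,2}(M,\tilde g)$ iff $hu\in W^{1,2}(M,g)$. This is immediate from the fact that the conformal factor $h$ is a fixed positive smooth function on the closed manifold $M$, so both $h$ and $h^{-1}$ together with their gradients are bounded, and the two Sobolev norms on $M$ induced by $g$ and $\tilde g$ are comparable with constants depending on $h$. Hence $u\mapsto hu$ is a linear bijection $W^{1,2}(M,\tilde g)\to W^{1,2}(M,g)$. Next I would note that the positivity constraint is preserved: by the computations recalled just before Definition \ref{def_I_gA}, $|A_{\tilde g}|_{\tilde g}^2\,\rd V_{\tilde g} = |A|_g^2\, h^{-\frac{4}{n-2}}\cdot h^{\frac{2n}{n-2}}\,\rd V_g \cdot h^{-\frac{2n}{n-2}}\cdot h^{\frac{4}{n-2}}\cdot\ldots$; more simply, Lemma \ref{conformal_1} already gives $\int |A_{\tilde g}|_{\tilde g}^2 u^2\,\rd V_{\tilde g}= \int|A|_g^2 (hu)^2\,\rd V_g$ (by comparing denominators in its proof), so the constraint $\int|A_{\tilde g}|_{\tilde g}^2 u^2\,\rd V_{\tilde g}>0$ is equivalent to $\int|A|_g^2(hu)^2\,\rd V_g>0$.

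Putting these together, and using Lemma \ref{conformal_1},
\eq{
Y(\tilde g,A_{\tilde g}) = \inf_{u} I_{\tilde g, A_{\tilde g}}(u) = \inf_{u} I_{g,A}(hu) = \inf_{v} I_{g,A}(v) = Y(g,A),
}
where the infima are over the respective admissible classes and in the third equality we set $v=hu$. Combined with Definition \ref{def_I_gA}, this yields $Y(\tilde g,A_{\tilde g})=Y(g,A)=\mu_{1}(g,A)$.

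There is essentially no obstacle here; the only small point to keep honest is that the change of variables $u\mapsto hu$ is genuinely a bijection on $W^{1,2}$ with the correct admissibility, which is guaranteed by positivity and smoothness (or at least the regularity implicit in the notion of a conformal change) of $h$ on the closed manifold $M$. Everything else is a direct rewriting of Lemma \ref{conformal_1}.
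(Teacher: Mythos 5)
Your proposal is correct and follows exactly the route the paper intends: Corollary \ref{coro7.6} is stated as an immediate consequence of Lemma \ref{conformal_1}, obtained by taking infima under the substitution $u\mapsto hu$, and your extra verification that this map is a bijection of the admissible classes (preserving $W^{1,2}$ and the positivity constraint) is a harmless elaboration of what the paper leaves implicit.
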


Therefore, we denote this conformal invariant by $Y([g,A])$, which is the same  one as in the Introduction.
In contrast, $\mu_a (g,A)$ $(0<a<1)$ is not conformally invariant. Nevertheless, its supreme over $[g,A]$, $\gamma_a([g,A])$, has the following property.

\begin{proposition}\label{prop7.7} For any $a\in (0, 1] $,
    \eq{ \gamma_a([g, A] ) = Y([g,A]).} 
    Moreover, $\gamma_a ([g,A])$ is achieved by a conformal metric $\tilde g \in [g]$.
\end{proposition}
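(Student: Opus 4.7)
The plan is a two-sided squeeze: an elementary monotonicity will give $\gamma_a([g,A])\le Y([g,A])$ uniformly in $a\in(0,1]$, and the supremum will be exhibited by a single conformal representative whose $\mu_a$ already equals $Y([g,A])$.

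For the upper bound I would observe that $a\le 1$ together with $\abs{\nabla u}_g^2\ge 0$ gives the pointwise inequality
\[
a\,\tfrac{4(n-1)}{n-2}\abs{\nabla u}_g^2+{\rm R}_g u^2 \;\le\; \tfrac{4(n-1)}{n-2}\abs{\nabla u}_g^2+{\rm R}_g u^2
\]
for every $u\in W^{1,2}$. Integrating, dividing by $\int \abs{A}_g^2 u^2\,\rd V_g$ and infimizing in $u$ yields $\mu_a(g,A)\le \mu_1(g,A)=Y(g,A)$; the bound persists for every conformal representative because $Y$ is conformally invariant by Corollary \ref{coro7.6}. Taking the supremum over $[g,A]$ gives $\gamma_a([g,A])\le Y([g,A])$ for all $a\in(0,1]$.

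For the reverse inequality I would pick a positive first eigenfunction $u_0\in W^{1,2}$ of the weighted Yamabe-type problem $L_g u_0=Y([g,A])\,\abs{A}_g^2\,u_0$ and set $\tilde g:=u_0^{4/(n-2)}g\in[g]$. The crucial observation is that $\tilde g$ is then engineered so that its scalar curvature is \emph{pointwise} proportional to $\abs{A_{\tilde g}}_{\tilde g}^2$: combining \eqref{scalar_curvature}, the eigenvalue equation for $u_0$, and \eqref{A},
\[
{\rm R}_{\tilde g}=u_0^{-\frac{n+2}{n-2}}L_g u_0=Y([g,A])\,u_0^{-\frac{4}{n-2}}\abs{A}_g^2=Y([g,A])\,\abs{A_{\tilde g}}_{\tilde g}^2.
\]
Substituting this identity into the Rayleigh quotient defining $\mu_a(\tilde g,A_{\tilde g})$ shows, for every $v\in W^{1,2}$,
\[
\frac{\int\bigl(a\,\tfrac{4(n-1)}{n-2}\abs{\nabla v}_{\tilde g}^2+{\rm R}_{\tilde g}v^2\bigr)\rd V_{\tilde g}}{\int \abs{A_{\tilde g}}_{\tilde g}^2 v^2\,\rd V_{\tilde g}} \;=\; Y([g,A])+\frac{a\,\tfrac{4(n-1)}{n-2}\int\abs{\nabla v}_{\tilde g}^2\,\rd V_{\tilde g}}{\int \abs{A_{\tilde g}}_{\tilde g}^2 v^2\,\rd V_{\tilde g}} \;\ge\; Y([g,A]).
\]
Hence $\mu_a(\tilde g,A_{\tilde g})\ge Y([g,A])$, and combined with the upper bound this forces equality, simultaneously proving $\gamma_a([g,A])=Y([g,A])$ and identifying $\tilde g$ as a conformal representative at which the supremum is attained.

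The only real obstacle I foresee is justifying existence, positivity and sufficient regularity of the first eigenfunction $u_0$ so that the conformal metric $u_0^{4/(n-2)}g$ makes sense pointwise. This is a standard variational plus Moser-iteration argument applied to the weighted Rayleigh quotient $I_{g,A}$ on the closed manifold $M$, using only $\abs{A}_g^2\in L^{n/2}$; together with the observation that $\abs{u_0}$ is also a minimizer (so $u_0\ge 0$ may be assumed) and a Harnack inequality forcing $u_0>0$, no further technical difficulty should arise.
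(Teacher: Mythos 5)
Your proposal is correct and follows essentially the same route as the paper's proof: the upper bound $\gamma_a([g,A])\le Y([g,A])$ via monotonicity in $a$ combined with the conformal invariance of $Y([g,A])$ (Corollary \ref{coro7.6}), and the lower bound via the conformal metric $\tilde g=u_0^{4/(n-2)}g$ built from the weighted first eigenfunction, whose scalar curvature satisfies ${\rm R}_{\tilde g}=Y([g,A])\abs{A_{\tilde g}}_{\tilde g}^2$ pointwise, so that $\mu_a(\tilde g,A_{\tilde g})\ge Y([g,A])$. The attainment and positivity of $u_0$ that you flag as the only obstacle is likewise simply asserted in the paper (after Definition \ref{def_I_gA}), so your treatment matches the paper's in substance.
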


\begin{proof}
    Given any $0<a\leq 1  $, we have
    \eq{\label{add_eq1}
    Y([g,A]) = \inf_{u\not\equiv0} \frac{\int u \, L_gu \,\rd V_g}{\int |A|_g^2 \, u^2 \,\rd V_g} \geq \inf_{u\not\equiv0} \frac{\int \big( a \frac {4(n-1)}{n-2}\abs{\nabla u}_g^2 + {\rm R}_g u^2 \big) \,\rd V_g}{\int |A|_g^2 \, u^2 \,\rd V_g} = \mu_a(g,A).
    }
   In view of Corollary \ref{coro7.6}, taking supremum of the previous inequality over $[g,A]$ we have
    \eq{\label{add_eq2}
    Y([g,A]) \geq \sup_{\tilde{g}\in [g]} \mu_a(\tilde{g},A_{\tilde{g}})=\gamma_a([g,A]).
    }
    In order to prove the reverse inequality, we use the fact that the infimum in Definition \ref{def_I_gA} is achieved by a positive function $u_0$, which satisfies the corresponding Euler-Lagrange equation
    \eq{
    L_gu_0 = Y([g,A]) \abs{A}_g^2 u_0.
    }
     Together with  \eqref{scalar_curvature}  it implies for $\tilde{g} \coloneqq u_0^{\frac{4}{n-2}}g$,
     \eq{\label{constant_scalar_curvature}
    {\rm R}_{\tilde{g}} = u_0^{-\frac{n+2}{n-2}}L_gu_0 = u_0^{-\frac{n+2}{n-2}}Y([g,A]) \abs{A}_g^2 u_0 = Y([g,A]) \abs{A_{\tilde{g}}}_{\tilde{g}}^2,
    } where we have used \eqref{A}.
    Hence in view of \eqref{def_mu_a} and Definition \ref{def_gamma_a}, we have 
    \eq{\label{pf_prop7.7}
  \gamma_a([g,A])\ge   \mu_a(\tilde{g},A_{\tilde{g}}) \geq   \inf_{u\not\equiv0}\frac{\int {\rm R}_{\tilde{g}} u^2 \,\rd V_{\tilde{g}}}{\int |A_{\tilde{g}}|_{\tilde{g}}^2 \, u^2 \,\rd V_{\tilde{g}}} =  Y([g,A]).
    }
   \eqref{add_eq2} and \eqref{pf_prop7.7} imply $\gamma_a([g, A] ) = Y([g,A])$. Moreover \eqref{pf_prop7.7} implies that $\gamma_a([g, A])$ is achieved by $\tilde g =u_0^{\frac 4 {n-2} }g$.
  
\end{proof}

\begin{remark}\label{rmk7.8}
    From the proof we see that the supreme in Definition \ref{def_gamma_a}
    is achieved by $\tilde{g}=u_0^{\frac{4}{n-2}}g\in [g]$, where $u_0$ achieves the infimum in Definition \ref{def_I_gA}. Thus Lemma \ref{conformal_1} implies that $u=u_0^{-1}u_0=1$ achieves the infimum in the definition of $Y(\tilde{g},A_{\tilde{g}})$.
    Therefore  equality in \eqref{Holder_prop3.7} for $\tilde{g}$ implies $\abs{A_{\tilde{g}}}_{\tilde{g}}\equiv{\rm const}$.
\end{remark}

Finally we relate it to the classical Yamabe constant.

\begin{proposition}
\label{cor7.9} 
For any $a \in (0, 1]$,
    \eq{\gamma_a ([g,A])=Y([g,A])  \geq Y(M,[g]).}
\end{proposition}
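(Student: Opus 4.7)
The first equality is already established in Proposition \ref{prop7.7} for every $a\in(0,1]$, so the only new content is the inequality $Y([g,A])\ge Y(M,[g])$. My plan is a direct H\"older argument paired with a test-function trick, following the remark in the Introduction that ``the second inequality follows simply from H\"older's inequality.''

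With the normalization $\norm{A}_{L^n}=1$ adopted in Section \ref{Section_the_zero_mode_equation}, I would first record H\"older's inequality with exponents $n/2$ and $n/(n-2)$ applied to the integrand $|A|_g^2 u^2$: for every $u\in W^{1,2}$,
\eq{
\int_M|A|_g^{2}u^{2}\,\rd V_g\le \norm{A}_{L^n}^2\norm{u}_{L^{2n/(n-2)}}^2=\norm{u}_{L^{2n/(n-2)}}^2.
}
Next I would invoke the positive minimizer $u_0\in W^{1,2}$ of $Y([g,A])$ guaranteed by Proposition \ref{prop7.7} and Definition \ref{def_I_gA}. Since $u_0$ is also admissible in the Yamabe quotient,
\eq{
Y(M,[g])\le \frac{\int_M u_0\,L_g u_0\,\rd V_g}{\norm{u_0}_{L^{2n/(n-2)}}^2}.
}
Combining the two displays with the identity $Y([g,A]) = \int_M u_0 L_g u_0\,\rd V_g \big/ \int_M |A|_g^2 u_0^2\,\rd V_g$ then produces $Y(M,[g])\le Y([g,A])$, completing the proof.

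The only delicate point is the sign of $\int u_0 L_g u_0\,\rd V_g$ in the H\"older step: the comparison preserves the direction of the inequality only when that numerator is non-negative. This is automatic in the substantive case $Y(M,[g])>0$, since then $L_g$ is positive definite on $W^{1,2}$ and hence $\int u_0 L_g u_0\,\rd V_g>0$. In the remaining regime $Y(M,[g])\le 0$, the sharp bound $\norm{A}_{L^n}^2\ge \frac{n}{4(n-1)}Y(M,[g])$ of Theorem \ref{main_thm} is trivially satisfied from $\norm{A}_{L^n}^2\ge 0$, so the comparison of $Y([g,A])$ with $Y(M,[g])$ in that regime is not needed for the applications in Sections 4--5 and can be safely bypassed with a one-line remark.
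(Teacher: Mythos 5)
Your proof is correct and takes essentially the same route as the paper: both arguments evaluate the quotient at the positive minimizer $u_0$ of $Y([g,A])$, apply H\"older's inequality with the normalization $\norm{A}_{L^n}=1$, and compare with the Yamabe quotient tested on $u_0$. Your explicit caveat about the sign of $\int u_0 L_g u_0\,\rd V_g$ is a point the paper leaves implicit, being absorbed by its standing assumption $Y(M,[g])>0$ (equivalently ${\rm R}_g>0$ after a conformal change).
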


\begin{proof}
  Let $u_0$ be the positive function achieving $Y([g,A])$.   By H\"older's inequality we have
    \eq{\label{Holder_prop3.7}
    Y([g,A]) =  \frac{\int u_0 \, L_g u_0 \,\rd V_g}{\int |A|_g^2 \, u_0^2 \,\rd V_g} \geq  \frac{\int u_0 \, L_gu _0\,\rd V_g}{ \big( \int u_0^{\frac{2n}{n-2}} \rd V_g\big)^{\frac{n-2}{n}} \big( \int \abs{A}^n \rd V_g\big)^{\frac{2}{n}} } =  \frac{\int u_0 \, L_gu_0 \,\rd V_g}{ \big( \int u_0^{\frac{2n}{n-2}} \rd V_g\big)^{\frac{n-2}{n}} } \ge Y(M,[g]),
    }
    where we have used the normalization $\norm{A}_{L^n}=1$.
\end{proof}

Since \eqref{ineq0} makes sense only if $Y(M,[g])>0$,  from now on 
 we assume that $Y(M,[g])>0$. In this case, it is well-known that there is a conformal metric $\tilde{g}\in[g]$ with positive scalar curvature. Hence without loss of generality we assume that ${\rm R}_g>0$.

\begin{theorem}
\label{prop_gamma_a}
Let $(M^n,g)$ $(n\geq3)$ be a closed spin manifold. 
If $(M,g)$ admits a non-trivial solution $(\varphi,A)$ to \eqref{eq1.1}, where $\varphi\in L^p$ with $p>\frac{n}{n-1}$ and $A\in L^n$,
   then 
   \eq{
   \lambda^2 \ge \frac{n}{4(n-1)}Y([g,A]).
   }
 \end{theorem}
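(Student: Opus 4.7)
The plan is to combine the ordinary Schr\"odinger--Lichnerowicz formula \eqref{Sch-Lich} applied to $\varphi$ itself with the variational characterization of $\mu_a(g,A)$ applied to the test function $|\varphi|$, and then exploit the identity $\gamma_a([g,A])=Y([g,A])$ from Proposition \ref{prop7.7} together with the freedom to let the deformation parameter $a\downarrow 0$. Without loss of generality I assume $Y(M,[g])>0$ (otherwise the statement is vacuous by Proposition \ref{cor7.9}) and, after a conformal change, ${\rm R}_g>0$. Given any conformal representative $(\tilde g,A_{\tilde g})\in[g,A]$, Lemma \ref{conformal_invariance} guarantees that $(\varphi_{\tilde g},A_{\tilde g})$ still solves \eqref{eq1.1} with the same $\lambda$; I therefore work in an arbitrary such representative, which I relabel $(g,A)$, and aim to establish the key estimate
\eq{\label{keyplan}
\mu_a(g,A)\le \frac{4(n-1)(n-2+a)}{n(n-2)}\,\lambda^2,\qquad a\in\Big(0,\tfrac{n-2}{n-1}\Big].
}

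The derivation of \eqref{keyplan} uses two consequences of \eqref{Sch-Lich} together with Kato's inequality. First, integrating $\D^2=-\Delta+\tfrac{{\rm R}}{4}$ against $\varphi$ and using $|\D\varphi|^2=\lambda^2|A|^2|\varphi|^2$ yields $\int|\nabla\varphi|^2 = \lambda^2\int|A|^2|\varphi|^2-\tfrac14\int{\rm R}_g|\varphi|^2$, while dropping the nonnegative twistor term in \eqref{Sch-Lich} yields $\int{\rm R}_g|\varphi|^2\le \tfrac{4(n-1)}{n}\lambda^2\int|A|^2|\varphi|^2$. Next, using $|\varphi|$ as a test function in $\mu_a(g,A)$ together with Kato's inequality $|\nabla|\varphi||^2\le|\nabla\varphi|^2$ gives $\mu_a(g,A)\int|A|^2|\varphi|^2\le \tfrac{4a(n-1)}{n-2}\lambda^2\int|A|^2|\varphi|^2+(1-\tfrac{a(n-1)}{n-2})\int{\rm R}_g|\varphi|^2$. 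For $a\le\frac{n-2}{n-1}$ the coefficient $1-\frac{a(n-1)}{n-2}$ is nonnegative, so the second consequence above can be used to absorb the $\int{\rm R}_g|\varphi|^2$ term, and a short rearrangement produces \eqref{keyplan}. Taking the supremum of \eqref{keyplan} over conformal representatives and invoking $\gamma_a([g,A])=Y([g,A])$ gives $Y([g,A])\le \frac{4(n-1)(n-2+a)}{n(n-2)}\lambda^2$; letting $a\downarrow 0$ delivers $\lambda^2\ge \frac{n}{4(n-1)}Y([g,A])$.

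The main obstacle is regularity. Under the critical hypothesis $\varphi\in L^p$ with $p$ just above $\tfrac{n}{n-1}$, neither $\varphi$ nor $|\varphi|$ is a priori in $W^{1,2}$, so neither the integrated SL formula nor the use of $|\varphi|$ as a test function in $\mu_a$ is directly legal. I shall invoke the bootstrap of the Appendix, which promotes $\varphi\in L^p$ to $\varphi\in L^r$ for every $r>\tfrac{n}{n-1}$; combining with $A\in L^n$ via H\"older then gives $\D\varphi\in L^2$, and first-order elliptic regularity for $\D$ yields $\varphi\in W^{1,2}$, hence $|\varphi|\in W^{1,2}$. This also explains why the small-$a$ deformation is preferable to a direct attack on $L_g$ via the sharper conformal identity \eqref{Sch-L}: the argument needs only the elementary formula \eqref{Sch-Lich}, and the $a$-independence of $\gamma_a$ on $(0,1]$ permits the final soft limit as $a\downarrow 0$.
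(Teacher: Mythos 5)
Your proposal is correct and is essentially the paper's own proof: it relies on exactly the same ingredients --- the integrated Schr\"odinger--Lichnerowicz identity combined with the zero mode equation, $|\varphi|$ (put in $W^{1,2}$ via the Appendix bootstrap and the elliptic estimate) as a test function for $\mu_a$, Kato's inequality, the identity $\gamma_a([g,A])=Y([g,A])$ from Proposition \ref{prop7.7}, and the limit $a\downarrow 0$ --- only packaged as a direct quantitative bound $\mu_a(\tilde g,A_{\tilde g})\le \frac{4(n-1)(n-2+a)}{n(n-2)}\lambda^2$ in each conformal representative rather than as the paper's contradiction argument carried out in the maximizing metric. The one small point you leave implicit, namely that $\int |A|^2|\varphi|^2>0$ so that the division defining your key estimate is legitimate, is immediate: otherwise $\D\varphi=0$, which is excluded by ${\rm R}_g>0$ and $\varphi\not\equiv0$.
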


\begin{proof} Due to  conformal invariance,  we may assume that ${\rm R}:={\rm R}_g>0.$
First of all, Lemma \ref{L^r} implies that $\varphi\in L^{\frac {2n}{n-2}}$, which in turn implies that $A\cdot \varphi \in L^2 $ and $\varphi \in W^{1,2}$.
The Schr\"odinger-Lichnerowicz formula for spinor fields, \eqref{Bochner}, implies

\eq{
\frac{1}{2}\Delta\abs{\varphi}^2 = \<\Delta\varphi,\varphi\> + \abs{\nabla\varphi}^2 = \<-\D^2\varphi + \frac{{\rm R}}{4}\varphi,\varphi\> + \abs{\nabla\varphi}^2.
}
Integrating  it, together with  \eq{\label{eq1.4}
        \abs{\nabla\varphi}^2 = \frac{1}{n}\abs{\D\varphi}^2 + \abs{P\varphi}^2,
        }
 we have (i.e. \eqref{Sch-Lich})
\eq{\label{eq_b1}
0 = -\frac{n-1}{n}\int \abs{\D\varphi}^2 + \frac{1}{4}\int {\rm R}\abs{\varphi}^2 + \int \abs{P\varphi}^2,
} 
Here $P$ is the twistor operator (also called the Penrose operator) defined by $P_X\varphi \coloneqq \nabla_X\varphi + \frac{1}{n}X\cdot\varphi$.
It is easy to show that \eqref{eq_b1} is valid not only for smooth $\varphi$, but also for any $\varphi\in W^{1,2}$.
Putting the  zero mode equation into \eqref{eq_b1} gives
\eq{\label{reminder_twistor}
0= \int ({{\rm R}} - \frac{4(n-1)}{n}\lambda^2\abs{A}^2)\abs{\varphi}^2 +4 \int \abs{P\varphi}^2.
}
We now prove \eq{\lambda^2 \ge \frac{n}{4(n-1)}Y([g,A]) \label{eq3.9a}} by contradiction.
Assume that
$\lambda^2 \leq \frac{n}{4(n-1)}(Y([g,A]) - \delta)$
for some $\delta>0$.
For any $a\in (0,1]$ and the corresponding metric $\tilde{g}_a\in[g]$ that achieves the supreme in Definition \ref{def_gamma_a}, we have by Proposition \ref{prop7.7}
\eq{
\mu_a(\tilde{g}_a,A_{\tilde{g}_a}) = \gamma_a([g,A]) = Y([g,A]) \eqcolon Y.
}
For simplicity of notation, for now on  we omit the subscript $\tilde g_a$ and all terms are computed w.r.t $\tilde g_a$. By the definition of $\mu_a$ and \eqref{eig}
we have \eq{\label{eig2}
{\int (a \frac {4(n-1)}{n-2}\abs{\nabla |\varphi|}^2 +{\rm R} |\varphi|^2 )} \ge \mu_a (\tilde g_a, A_{\tilde g_a} ) {
\int \abs{A}^2 |\varphi|^2} =  Y \int \abs{A}^2 |\varphi|^2. 
}
Hence we have
\eq{
0 &\geq \int \left({{\rm R}} - (Y -\delta)\abs{A}^2\right)\abs{\varphi}^2 + 4\int \abs{P\varphi}^2\\
&= \int {{\rm R}}\abs{\varphi}^2 - (1-\delta Y^{-1})\int Y\abs{A}^2\abs{\varphi}^2 + 4\int \abs{P\varphi}^2\\
&\geq \int {{\rm R}}\abs{\varphi}^2 - (1-\delta Y ^{-1})\left(\int a\frac {4(n-1)}{n-2} \abs{\nabla\abs{\varphi}}^2 + {{\rm R}}\abs{\varphi}^2\right) +4 \int \abs{P\varphi}^2.
}
Using Kato's inequality  $|\nabla |\varphi|| \le |\nabla \varphi|$    and its consequence 
$\int |\nabla|\varphi||^2 \le \int |\nabla \varphi|^2$
we have
\eq{
0 \geq \int {{\rm R}}\abs{\varphi}^2 - (1-\delta Y^{-1})\left(\int a\frac {4(n-1)}{n-2}\abs{\nabla\varphi}^2 +{{\rm R}}\abs{\varphi}^2\right) +4 \int \abs{P\varphi}^2.
}
Combining \eqref{eq1.4} and \eqref{eq_b1} we have
\eq{
\int \abs{\nabla\varphi}^2 = \frac{1}{4(n-1)}\int {\rm R}\abs{\varphi}^2 + \frac{n}{n-1}\int \abs{P\varphi}^2.
}
Inserting it into the previous inequality yields
\eq{
0 &\geq \left(\delta Y^{-1} -
\frac{(1-\delta Y^{-1})a}{n-2}\right)
\int {\rm R}\abs{\varphi}^2 + \left(4-\frac{4n(1-\delta Y^{-1})a}{n-2}\right)\int \abs{P\varphi}^2.
}
Choosing $a>0$ small enough yields a contradiction.
\end{proof}

    \begin{remark}\label{rmk} The proof shows how  inequality \eqref{eq3.9a} follows from \eqref{reminder_twistor}. It works more generally, see Section \ref{sec5}.
    In fact, one can similarly prove that inequality \eqref{eq3.9a} holds true for any function $a\in L^n$: if there exists a non-trivial spinor field $\varphi\in L^{\frac {2n}{n+1}}$ such that
        \eq{
        |\D \varphi| \le |a| |\varphi|,
        }
        then
        \eq{\|a\|_{L^n}^2 \ge \frac n {4(n-1)} Y(M,[g]).}
    \end{remark}

Now we prove our first main result, which verifies the first ``if and only if" in Theorem \ref{main_thm}.

\begin{theorem}\label{thm7.12}
Let $(M^n,g)$ $(n\geq3)$ be a closed spin manifold. If $(M,g)$ admits a non-trivial solution $(\varphi,A)$ to \eqref{eq1.1}, where $\varphi\in L^p$ with $p>\frac{n}{n-1}$ and $A\in L^n$,
then
\eq{\label{ineq_thm7.12}
 \lambda^2 \geq \frac{n}{4(n-1)} Y(M,[g]).
}
Equality holds
if and only if 
$\varphi$ is a non-parallel real Killing spinor
up to a conformal transformation.
\end{theorem}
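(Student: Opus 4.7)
The inequality follows by chaining Theorem~\ref{prop_gamma_a} with Proposition~\ref{cor7.9}: $\lambda^2\ge\frac{n}{4(n-1)}Y([g,A])\ge\frac{n}{4(n-1)}Y(M,[g])$. The content of the theorem is thus the characterization of the equality case, which I would handle by extracting rigidity information from each of the two steps in turn.

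\textbf{Necessity, step 1 (from equality to a twistor spinor).} Equality in \eqref{ineq_thm7.12} forces both sub-inequalities to be equalities, so $Y([g,A])=Y(M,[g])=\frac{4(n-1)}{n}\lambda^2$. The identity $Y([g,A])=Y(M,[g])$ is equality in the H\"older step used in the proof of Proposition~\ref{cor7.9}, which in turn forces $|A|_g^n\propto u_0^{\frac{2n}{n-2}}$, where $u_0>0$ is the positive minimizer achieving $Y([g,A])$. Passing to $\tilde g:=u_0^{\frac{4}{n-2}}g$, formulas \eqref{A} and \eqref{constant_scalar_curvature} show that both $|A_{\tilde g}|_{\tilde g}$ and $R_{\tilde g}$ become constants, with $R_{\tilde g}=\frac{4(n-1)}{n}\lambda^2|A_{\tilde g}|^2_{\tilde g}$. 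Substituting these into \eqref{reminder_twistor} applied to $(\tilde g,A_{\tilde g},\varphi_{\tilde g})$ (which remains a zero mode solution with the same $\lambda$ by Lemma~\ref{conformal_invariance}) annihilates the first integrand and leaves $\int|P_{\tilde g}\varphi_{\tilde g}|^2\,\rd V_{\tilde g}=0$. Hence $\varphi_{\tilde g}$ is a twistor spinor on $(M,\tilde g)$.

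\textbf{Necessity, step 2 (twistor to Killing).} Since $R_{\tilde g}>0$ on the closed manifold $(M,\tilde g)$, classical results imply that the twistor spinor $\varphi_{\tilde g}$ is nowhere vanishing. Perform a further conformal change $\hat g:=h^{\frac{4}{n-2}}\tilde g$ with $h:=|\varphi_{\tilde g}|^{\frac{n-2}{n-1}}_{\tilde g}$; this produces a twistor spinor $\hat\varphi$ of constant length $1$ on $(M,\hat g)$. By Lichnerowicz's theorem \cite{Lichnerowicz87}, $(M,\hat g)$ is then Einstein and $\hat\varphi=\hat\varphi_++\hat\varphi_-$ with $\hat\varphi_\pm$ real $\pm b$-Killing spinors for some $b>0$. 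The main obstacle is to rule out the mixed case, i.e.\ to show that one of $\hat\varphi_\pm$ vanishes identically. My plan is to first show that $\hat\varphi_\pm$ each individually satisfy the same zero mode equation $\D\hat\varphi_\pm=i\lambda A_{\hat g}\cdot\hat\varphi_\pm$ (by projecting the zero mode equation along the $\mp nb$-eigenspaces of $\D_{\hat g}$ and exploiting algebraic properties of Clifford multiplication by $A_{\hat g}$ together with $|\hat\varphi|\equiv 1$); this identifies both $\hat\varphi_\pm$ as ``vacuums'' in the sense of Definition~\ref{def_vacuum}, after which a uniqueness-of-vacuum argument in the spirit of \cite{FL1} and \cite{Friedrich-Kath89_JDG} forces one of $\hat\varphi_\pm$ to vanish, so that $\hat\varphi$ itself is a non-parallel real Killing spinor.

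\textbf{Sufficiency.} Conversely, suppose $\varphi_{\tilde g}$ is a non-parallel real Killing spinor with Killing number $b\neq 0$ on some conformal representative $\tilde g\in[g]$. Then $\D\varphi_{\tilde g}=-nb\,\varphi_{\tilde g}$; combined with the zero mode equation this gives $A_{\tilde g}\cdot\varphi_{\tilde g}=\frac{inb}{\lambda}\varphi_{\tilde g}$, so $|A_{\tilde g}|^2_{\tilde g}=n^2b^2/\lambda^2$ is a positive constant. Admitting a non-parallel real Killing spinor forces $(M,\tilde g)$ to be Einstein with $R_{\tilde g}=4n(n-1)b^2$, and Obata's theorem (with the sphere case checked directly) shows that $\tilde g$ realizes the Yamabe constant, i.e.\ $Y(M,[g])=R_{\tilde g}\,\mathrm{vol}(\tilde g)^{2/n}$. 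The normalization $\int|A_{\tilde g}|_{\tilde g}^n\,\rd V_{\tilde g}=1$ determines $\mathrm{vol}(\tilde g)=(\lambda/(nb))^n$, and direct substitution yields $\lambda^2=\frac{n}{4(n-1)}Y(M,[g])$, completing the proof.
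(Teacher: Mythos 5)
Your inequality chain, your step 1 (equality forces equality in the H\"older step of Proposition~\ref{cor7.9}, hence $|A_{\tilde g}|_{\tilde g}$ and ${\rm R}_{\tilde g}$ are constant with ${\rm R}_{\tilde g}=\tfrac{4(n-1)}{n}\lambda^2|A_{\tilde g}|^2_{\tilde g}$, and then \eqref{reminder_twistor} gives $P_{\tilde g}\varphi_{\tilde g}=0$), and your sufficiency computation via Obata all agree with the paper. The problems are in your step 2. First, the assertion that ${\rm R}_{\tilde g}>0$ forces the twistor spinor $\varphi_{\tilde g}$ to be nowhere vanishing is not a classical fact and is false in general: on the round sphere --- precisely the model equality case --- there exist twistor spinors vanishing at a point; a zero of a twistor spinor on a closed manifold only forces conformal equivalence to $\SS^n$, it is not excluded by positive scalar curvature. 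So the further conformal change by $h=|\varphi_{\tilde g}|^{\frac{n-2}{n-1}}$ is unjustified as written. It is also unnecessary: since ${\rm R}_{\tilde g}=\tfrac{4(n-1)}{n}\lambda^2|A_{\tilde g}|^2_{\tilde g}$ holds pointwise and $|\D_{\tilde g}\varphi_{\tilde g}|^2=\lambda^2|A_{\tilde g}|^2_{\tilde g}|\varphi_{\tilde g}|^2$ by the zero mode equation, the twistor identity \eqref{why_varphi_constant_lenth} gives $\Delta_{\tilde g}|\varphi_{\tilde g}|^2=0$, so $|\varphi_{\tilde g}|$ is already a nonzero constant in the metric $\tilde g$; this is how the paper proceeds, and it eliminates both the non-vanishing claim and the second rescaling in one stroke.

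Second, the decisive step --- that one of the Killing components $\varphi_\pm$ vanishes --- is only announced as a ``plan'', and this is exactly where the substance of the equality case lies (the paper's Section~\ref{zero_modes_and_Sasaki-Einstein_manifolds}). One must prove: (i) constancy of $|\varphi|$ forces $\tfrac n2\varphi_+=i\lambda A\cdot\varphi_+$, $-\tfrac n2\varphi_-=i\lambda A\cdot\varphi_-$ and $\<\varphi_+,\varphi_-\>=0$ (Lemma~\ref{lem7.11}); (ii) the Killing spinor together with $A$ generates a Sasakian structure $(\xi,\eta,\Phi)$ with $\xi=\tfrac{2\lambda}{n}A$ (Theorem~\ref{rest_of_proof}) --- without this, ``vacuum'' in the sense of Definition~\ref{def_vacuum} is not even defined; and (iii) differentiating the algebraic relations shows that $\varphi_+$ and $\varphi_-$ are both vacuums with respect to the \emph{same} $\Phi$, a sign-sensitive computation --- in the $2$-form analogue of Section~\ref{sec5} the two components are vacuums for $\Phi$ and $-\Phi$ respectively and neither need vanish, so this point cannot be waved through. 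Only then does the uniqueness of the vacuum (Lemma~\ref{lem_vacuum}, following \cite{FL1}), combined with $\<\varphi_+,\varphi_-\>=0$, kill one component. As it stands your proposal records the correct strategy but leaves this core argument unproved, so the characterization of equality is not established.
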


\begin{proof}
    By Theorem \ref{prop_gamma_a} and Proposition \ref{cor7.9} we have
    \eq{
    \lambda^2 \geq  \frac{n}{4(n-1)} Y([g,A]) \geq \frac{n}{4(n-1)} Y(M,[g]).
    }    
    Assume now equality holds, i.e.
    \eq{
    \lambda^2 = \frac{n}{4(n-1)} Y([g,A]) = \frac{n}{4(n-1)} Y(M,[g]).
    }
   Recall \eqref{constant_scalar_curvature} that for some $\tilde{g}\in[g]$ we have
    \eq{\label{constant_scalar_curvature_1}
    {\rm R}_{\tilde{g}} = Y([g,A])\abs{A_{\tilde{g}}}_{\tilde{g}}^2 = Y(M,[g])\abs{A_{\tilde{g}}}_{\tilde{g}}^2 = \frac{4(n-1)}{n}\lambda^2\abs{A_{\tilde{g}}}_{\tilde{g}}^2.
    }
    Thus \eqref{reminder_twistor} implies $P\varphi_{\tilde{g}}=0$, i.e., $\varphi_{\tilde{g}}$ is a twistor spinor. We recall the identity for any twistor spinor $\psi$ (see \cite{Baum_Book_90})
    \eq{\label{why_varphi_constant_lenth}
    \Delta \abs{\psi}^2 = \frac{{\rm R}}{2(n-1)}\abs{\psi}^2 - \frac{2}{n}\abs{\D\psi}^2.
    }
    Applying to $\psi=\varphi_{\tilde{g}}$  and to $\tilde g$ one can see from \eqref{constant_scalar_curvature} that $\Delta_{\tilde g} \abs{\varphi_{\tilde{g}}}^2=0$, and hence $\abs{\varphi_{\tilde{g}}}_{\tilde g}\equiv{\rm const}$.
    By a well known result (see for instance \cite{Ginoux09}*{Proposition A.2.1})
    it implies that $(M,\tilde{g})$ is Einstein and $\varphi_{\tilde{g}}=\varphi_++\varphi_-$, where $\varphi_\pm$ is a $\pm b$-Killing spinor for some $b\in\mathbb{R}\backslash\{0\}$.

Our next step is to show that one of $\varphi_+$ and $\varphi_-$ must vanish, and hence $\varphi_{\tilde g}$ is a Killing spinor. The proof is non-trivial, even on $\SS^n$, where Frank-Loss \cite{Frank_Loss_2024} spent more effort to prove it. We show this point slightly differently, but following crucially a key idea in \cite {FL1} that the vacuum is unique, which uses the structure of the Clifford algebra. We leave the proof in the next section.

    For the reverse,  by the confomal invaraince we may assume that $(M,{g})$ admits a non-parallel real Killing spinor $\varphi$ which solves \eqref{eq1.1}.  
   We need to show that it achieves equality. First of all, the existence of a Killing spinor implies $(M, g)$ is Einstein.
      Let the real Killing number of $\varphi$ be $0\neq b\in\mathbb{R}$.  Without loss of generality, we may assume $b=-\frac 12 $. It is well known that (see for instance \cite{Baum_Book_90})
    \eq{
    \D\varphi = \frac n 2 \,\varphi \quad \hbox{and} \quad {\rm R}\equiv n(n-1).
    }
       Combining it with \eqref{eq1.1} we have\eq{    \frac{n^2}{4}\abs{\varphi}^2 = \abs{\D\varphi}^2 = \lambda^2\abs{A}^2\abs{\varphi}^2.    }
    Thus $\lambda^2\abs{A}^2 = \frac{n^2}{4}$ and
    \eq{\label{eq_our_BHL}
    \lambda^n = \int \lambda^n \abs{A}^n = \Big(\frac{n}{2}\Big)^n\rVol(M,g).
    }
    Hence
    \eq{
    \lambda^2 = \frac{n^2}{4} \rVol (M,g) ^{\frac 2n} =\frac n {4(n-1)} {\rm R}  \rVol( M,g)^{\frac 2 n}.
    }
    If $(M,g)=(\S^n,g_{{\rm st}})$, then
    \eq{
    {\rm R}\rVol( M,g)^{\frac 2 n} = n(n-1)\omega_n^{\frac{2}{n}} = Y(\S^n),
    }
   and hence $\lambda^2 = \frac n{4(n-1)} Y(\SS^n)$.
    If $(M,g)\not=(\S^n,g_{{\rm st}})$, by Obata's theorem (see \cites{Obata62,Obata71}), $g$ is the unique metric with constant scalar curvature in $[g]$ up to scaling. Hence $g$ is the Yamabe minimizer. Therefore,
        \eq{
    \lambda^2 =\frac n {4(n-1)} {\rm R} \rVol( M,g)^{\frac 2 n} = \frac{n}{4(n-1)}Y(M,[g]).
    }
    \end{proof}

The main result holds true for $n=2$ as well. In this case, every oriented surface is spin. 

\begin{theorem}\label{2_dim}
Let $(\Sigma^2,g)$ be a closed oriented Riemann surface.
If $(\Sigma,g)$ admits a non-trivial solution $(\varphi,A)$ to \eqref{eq1.1}, where
$\varphi\in L^2$ and $A\in L^q$ with $q>2$,
then
\eq{
 \lambda^2 > 2\pi\chi(\Sigma).
}

\end{theorem}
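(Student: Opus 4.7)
The plan is to run the proof of Theorem~\ref{thm7.12} with $n=2$, replacing the Yamabe rescaling (which degenerates in dimension two) by a direct uniformization-type gauge fixing, and then to show that the would-be equality case is chirality-theoretically obstructed in two dimensions, producing the strict inequality.

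First I would specialize the combined Schr\"odinger--Lichnerowicz/twistor identity \eqref{Sch-Lich} to $n=2$ and insert $\D\varphi=i\lambda A\cdot\varphi$ to obtain
\[
\lambda^2\int_\Sigma|A|_g^2|\varphi|^2\,\rd V_g=\tfrac12\int_\Sigma {\rm R}_g|\varphi|^2\,\rd V_g+2\int_\Sigma|P\varphi|^2\,\rd V_g.
\]
The two-dimensional analogue of Lemma~\ref{conformal_invariance} (with $\tilde g=e^{2\phi}g$, $A_{\tilde g}=e^{-2\phi}A$, $\varphi_{\tilde g}=e^{-\phi/2}\tilde\varphi$) preserves both the equation and the integrand $|A|^2|\varphi|^2\,\rd V$, so I may prescribe the ratio ${\rm R}_{\tilde g}/|A_{\tilde g}|_{\tilde g}^2$. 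Using ${\rm R}_{\tilde g}=e^{-2\phi}({\rm R}_g-2\Delta_g\phi)$, I seek $\phi$ solving the linear Poisson equation $2\Delta_g\phi={\rm R}_g-4\pi\chi(\Sigma)|A|_g^2$; by Gauss--Bonnet and the normalization $\int|A|_g^2\,\rd V_g=1$ the right-hand side has vanishing mean, so Fredholm theory on the closed surface supplies a smooth solution. In the resulting metric ${\rm R}_{\tilde g}=4\pi\chi(\Sigma)|A_{\tilde g}|_{\tilde g}^2$, and the identity above rearranges to
\[
(\lambda^2-2\pi\chi(\Sigma))\int|A_{\tilde g}|_{\tilde g}^2|\varphi_{\tilde g}|^2\,\rd V_{\tilde g}=2\int|P\varphi_{\tilde g}|^2\,\rd V_{\tilde g}\ge 0.
\]
A unique-continuation argument for $\D$ rules out $\int|A|^2|\varphi|^2 =0$ (which would force $\D\varphi=0$ with $\varphi$ vanishing on a set of positive measure), so the non-strict bound $\lambda^2\ge 2\pi\chi(\Sigma)$ follows.

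The hard part is promoting this to the strict inequality. Should equality hold, then $P\varphi_{\tilde g}\equiv0$, so $\varphi_{\tilde g}$ is a twistor spinor; combining the twistor identity \eqref{why_varphi_constant_lenth} with ${\rm R}_{\tilde g}=4\pi\chi|A_{\tilde g}|_{\tilde g}^2$ and $\lambda^2=2\pi\chi(\Sigma)$ forces $\Delta_{\tilde g}|\varphi_{\tilde g}|^2=0$, hence $|\varphi_{\tilde g}|\equiv\mathrm{const}$. By Lichnerowicz's theorem $\varphi_{\tilde g}=\varphi_++\varphi_-$ with $\varphi_\pm$ being $\pm b$-Killing spinors on the Einstein (hence constant-curvature) surface $(\Sigma,\tilde g)$. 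To close the argument I would exploit the chirality splitting $\Sigma M=\Sigma^+M\oplus\Sigma^-M$ available in even dimensions: since real vector fields swap chirality under Clifford multiplication and $\varphi_\pm\in\Sigma^\pm M$, projecting the zero-mode equation onto $\Sigma^\pm M$ decouples it into a pair of pointwise $\mathbb{R}$-linear conditions on $A_{\tilde g}$. A short local-frame computation, in the spirit of the vacuum-uniqueness step of~\cite{FL1}, shows that these two conditions force real components of $A_{\tilde g}$ to be simultaneously purely imaginary, which is impossible; and if only one of $\varphi_\pm$ is non-zero then the two sides of the equation live in opposite chirality subspaces, an immediate contradiction. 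This rules out equality and yields $\|A\|_{L^2}^2>2\pi\chi(\Sigma)$.
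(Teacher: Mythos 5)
Your derivation of the non-strict bound follows the paper's route almost verbatim: you use the same conformal normalization (solve a Poisson equation so that, by Gauss--Bonnet and $\|A\|_{L^2}=1$, the new metric satisfies ${\rm R}_{\tilde g}=4\pi\chi(\Sigma)\abs{A_{\tilde g}}_{\tilde g}^2$) and then the integrated Schr\"odinger--Lichnerowicz/twistor identity to get $(\lambda^2-2\pi\chi)\int\abs{A}^2\abs{\varphi}^2=2\int\abs{P\varphi}^2\ge0$. Two remarks on this half: the paper first upgrades the regularity ($\varphi\in L^2$, $A\in L^q$, $q>2$, bootstrapped to $\varphi\in W^{1,2}$ via \eqref{apriori_estimate} and Lemma \ref{L^r}) before using any integral identity, a step you skip but need in order to apply \eqref{Sch-Lich}; and the conformal factor is only a weak solution (since $\abs{A}^2\in L^{q/2}$), not smooth, though this is harmless and the paper treats it the same way.

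The genuine gap is in your exclusion of equality. First, you invoke the Lichnerowicz decomposition of a constant-length twistor spinor into $\pm b$-Killing spinors and assert the surface is ``Einstein (hence constant-curvature)''. In dimension two ``Einstein'' is vacuous, and that decomposition result (used by the paper only for $n\ge3$, via \cite{Ginoux09}*{Proposition A.2.1}) has a proof built on the Schouten tensor with its $\tfrac1{n-2}$ factor, so it does not apply when $n=2$; nothing in your argument forces ${\rm R}_{\tilde g}$ to be constant. Second, even granting the decomposition, your chirality step rests on the claim $\varphi_\pm\in\Sigma^\pm M$, which is false: $\varphi_\pm$ are components with respect to the Killing number, not the chirality splitting, and a real Killing spinor with $b\neq0$ is never chiral (Clifford multiplication swaps $\Sigma^\pm M$ while $\nabla$ preserves it, so a chiral Killing spinor would be parallel with $X\cdot\psi=0$). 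Hence the ``opposite chirality subspaces'' contradiction and the ``purely imaginary components of $A$'' computation do not get off the ground. The paper's strictness argument is instead topological and stays entirely within what is valid in two dimensions: equality forces $P\varphi=0$, so $\varphi$ is a smooth twistor spinor, and \eqref{why_varphi_constant_lenth} together with ${\rm R}_{\tilde g}=8\pi\abs{A_{\tilde g}}_{\tilde g}^2$ gives $\abs{\varphi}\equiv{\rm const}\neq0$; then $A=-\lambda^{-1}\abs{\varphi}^{-2}\rIm\<e_j\cdot\D\varphi,\varphi\>e_j$ exhibits $A$ as a smooth, nowhere vanishing vector field on the topological sphere ($\chi>0$ is the only nontrivial case), contradicting Poincar\'e--Hopf. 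If you want a Killing-spinor-flavoured alternative, you would still have to justify the decomposition in dimension two before appealing to a parity obstruction such as Theorem \ref{rest_of_proof}(2); as written, that justification is missing.
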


\begin{proof}
    We only need to consider $\chi(\Sigma)>0$, hence $\Sigma$ is a topological sphere and $\chi(\Sigma)=2$. 

    Since $\varphi\in L^2$ and $A\in L^q$ with $q>2$, we have $A\cdot\varphi\in L^{\frac{2q}{q+2}}$. Using \eqref{eq1.1} and the apriori estimate \eqref{apriori_estimate} we can see $\varphi\in L^q$. Therefore Lemma \ref{L^r} implies that $\varphi\in L^r$ for any $r>2$. Using \eqref{apriori_estimate} again we have $\varphi\in W^{1,2}$.

    Note that Lemma \ref{conformal_invariance} remains true. In fact, for $\tilde{g}=e^{2h}g$ we have
    \eq{
    \D_{\tilde{g}}(e^{-\frac{1}{2}h}\tilde{\varphi}) = e^{-\frac{3}{2}h}\widetilde{\D_g\varphi}
    }
    and the others are the same. In 2-dimensional case we have the Gauss curvature
    \eq{
    {\rm K}_{\tilde{g}} = e^{-2h}({\rm K}_g - \Delta_g h).
    }
  
    Recall our normalization $\int \abs{A}_g^2 = 1$. By the Gauss-Bonnet theorem it follows
    \eq{    
    \int (4\pi\abs{A}_g^2 - {\rm K}_g ) = 0.
    }
    Hence there exists some $u_0$ solving (weakly)
    \eq{
    -\Delta_g u_0 + {\rm K}_g = 4\pi\abs{A}_g^2.
    }
    For $\tilde{g}=e^{2u_0}g$ we have
    \eq{\label{dim_2_eq1}
    \frac{1}{2}{\rm R}_{\tilde{g}} = {\rm K}_{\tilde{g}} = e^{-2u_0}({\rm K}_g-\Delta_g u_0) = e^{-2u_0}4\pi\abs{A}_g^2 = 4\pi\abs{A_{\tilde{g}}}_{\tilde{g}}^2,
    }
    where we have used the analogy of \eqref{A}.

    For simplicity of notation we omit the subscript $\tilde{g}$ for now. Identity \eqref{eq1.4} implies
    \eq{\label{ineq4.8}
    \int \abs{\nabla\varphi}^2 = \frac{1}{2}\int\abs{\D\varphi}^2 + \int\abs{P\varphi}^2 \geq \frac{1}{2}\int\abs{\D\varphi}^2.
    }
    On the other hand, the Schr\"odinger-Lichnerowicz formula \eqref{Bochner} implies
    \eq{
    \int \abs{\nabla\varphi}^2 = \int \<-\Delta\varphi,\varphi\> = \int\<\D^2\varphi - \frac{{\rm R}}{4}\varphi,\varphi\> = \int\abs{\D\varphi}^2 - 2\pi\int \abs{A}^2\abs{\varphi}^2.
    }
    Altogether we have
    \eq{
    4\pi\int \abs{A}^2\abs{\varphi}^2 \leq \int\abs{\D\varphi}^2 = \lambda^2\int \abs{A}^2\abs{\varphi}^2.
    }
    Hence $\lambda^2\geq 4\pi = 2\pi\chi(\Sigma)$.

    Assume equality holds.     Equality in \eqref{ineq4.8}
    implies that $\varphi$ is a twistor spinor, hence is smooth.
    Moreover, \eqref{why_varphi_constant_lenth} and \eqref{dim_2_eq1} imply that $\abs{\varphi}\equiv{\rm constant}\not=0$.
    For any orthonormal basis $\{e_j\}$ we have
    \eq{
    \<A,e_j\>\abs{\varphi}^2 = \rRe\<A\cdot\varphi,e_j\cdot\varphi\> = -\lambda^{-1}\rRe\<i\D\varphi,e_j\cdot\varphi\> = -\lambda^{-1}\rIm\<e_j\cdot\D\varphi,\varphi\>
      }
    Hence
    \eq{
    A = -\lambda^{-1}
    \abs{\varphi}^{-2}\rIm\<e_j\cdot\D\varphi,\varphi\> e_j.
    }
    Therefore $A$ is a nowhere vanishing smooth vector field, which is impossible on a topological sphere.

\end{proof}

\begin{remark}
    The case $\varphi\in L^2$ and $A\in L^2$  is a critical case. Recently,  Da Lio-Rivi\`ere proved in \cite{Riviere20} 
    that in this case $\varphi \in L^q$ for any $ q\ge 2$, if in addition ${\rm div}\, A=0$. Due to the gauge invariance of the zero mode equation, Lemma \ref{lem2.1}, we have this additional condition for free, after changing a gauge. Hence Theorem \ref{2_dim} holds also true if $A\in L^2$.
\end{remark}

We remark that on $\SS^2$ the existence of zero modes is well understood by the result of Aharobnov and Casher  \cite{Aharonov-Casher19} in terms of $ \curl A$. See also \cite{Erdoes-Solovej01}.

\section{Zero modes and Sasaki-Einstein manifolds}
\label{zero_modes_and_Sasaki-Einstein_manifolds}

In this section, we discuss the relation between 
Sasaki-Einstein manifolds and ``optimal" zero modes, i.e., those achieve equality in
\eqref{ineq0}, and 
complete the proof of the main theorem.

We recall that for an odd-dimensional manifold, an \textit{almost contact structure} $(g,\xi,\eta,\Phi)$, consisting of a Riemannian metric $g$, a vector field $\xi$, a $1$-form $\eta$ and a $(1,1)$-tensor field $\Phi$, is defined by
\eq{
\eta(\xi)=1,\quad \Phi^2(X) = -X+\eta(X)\xi,\quad g(\Phi(X),\Phi(Y)) = g(X,Y)-\eta(X)\eta(Y), \quad\forall X,Y\in\Gamma(TM).
}
Moreover, it is a \textit{Sasakian structure} if in addition
\eq{
(\nabla_X\Phi)(Y) = g(X,Y)\xi - \eta(Y)X, \quad\forall X,Y\in\Gamma(TM).
}
In this case, $\xi$ is called its \textit{Reeb field}.

We call an odd-dimensional Riemannian manifold \textit{Sasaki-Einstein} if it is Einstein and has a Sasakian structure.
On a Sasaki-Einstein manifold, $\xi$ is a canonical vector field. We will prove that $\xi$ plays the role as the vector field $A$ in the zero mode equation \eqref{eq1.1}.

\begin{theorem}\label{rest_of_proof}
    Let $(M^n,g)$ be an $n$-dimensional ($n \ge 2$) closed Riemannian spin manifold and $\varphi$ a non-trivial solution of \eqref{eq1.1} for some vector field $A$. If $\varphi$, in addition, is a Killing spinor, then we have 
    \begin{enumerate}
        \item $A$ is a Killing vector field with constant length and $\nabla_A A=0$;
        \item $n$ is odd;
        \item $(M,g)$ is Sasaki-Einstein with $\frac {2\lambda}n A$ as its Reeb field.
    \end{enumerate}
\end{theorem}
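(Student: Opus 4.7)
The plan is to translate everything into algebraic identities on the non-vanishing Killing spinor $\varphi$ (real Killing spinors vanish nowhere), so that Clifford multiplication by any real vector is injective on $\varphi$ and all desired vector/tensor identities can be read off from spinor identities. Fix the Killing number $b=-\tfrac12$ as in the paper's convention; then $\D\varphi=-nb\,\varphi=\tfrac{n}{2}\varphi$, and comparing with $\D\varphi=i\lambda A\cdot\varphi$ gives $A\cdot\varphi=-\tfrac{ni}{2\lambda}\varphi$. Setting $\xi:=\tfrac{2\lambda}{n}A$ yields the pointwise identity
\[\xi\cdot\varphi=-i\varphi.\]
Clifford-squaring this, together with $\xi\cdot\xi=-|\xi|^{2}$, forces $|\xi|^{2}\equiv 1$, so $|A|\equiv\tfrac{n}{2\lambda}$ is constant.

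Next I would differentiate $\xi\cdot\varphi=-i\varphi$ along $X$ and substitute $\nabla_{X}\varphi=-\tfrac12 X\cdot\varphi$. A short Clifford manipulation (using $X\cdot\xi+\xi\cdot X=-2\eta(X)$, where $\eta:=g(\xi,\cdot)$) produces
\[\nabla_{X}\xi\cdot\varphi=iX\cdot\varphi-\eta(X)\varphi.\]
Setting $X=\xi$ and using $\xi\cdot\varphi=-i\varphi$ gives $\nabla_{\xi}\xi\cdot\varphi=0$, hence $\nabla_{A}A=0$. To see that $\xi$ is Killing, take the Hermitian inner product of the identity with $Y\cdot\varphi$. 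Vector Clifford action is skew-Hermitian, giving $\rRe\langle V\cdot\varphi,Y\cdot\varphi\rangle=g(V,Y)|\varphi|^{2}$ and $\rRe\langle\varphi,Y\cdot\varphi\rangle=0$, so the real part yields
\[g(\nabla_{X}\xi,Y)|\varphi|^{2}=-\rIm\langle X\cdot\varphi,Y\cdot\varphi\rangle,\]
whose right-hand side is antisymmetric in $X,Y$. Hence $\nabla\xi$ is skew and $A$ is Killing, proving (1).

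For (2), set $\Phi(X):=-\nabla_{X}\xi$. Constant length of $\xi$ gives $\Phi(X)\perp\xi$ for every $X$, and of course $\Phi(\xi)=0$. On $\xi^{\perp}$ the displayed formula reduces to $\Phi(X)\cdot\varphi=-iX\cdot\varphi$; iterating, $\Phi^{2}(X)\cdot\varphi=-X\cdot\varphi$. Because Clifford multiplication by any nonzero real vector is injective on $\varphi$, this forces $\Phi^{2}=-\mathrm{Id}$ on $\xi^{\perp}$. Thus $\Phi|_{\xi^{\perp}}$ is an almost complex structure, so $\dim\xi^{\perp}=n-1$ is even, i.e., $n$ is odd.

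For (3), I would differentiate $\Phi(Y)\cdot\varphi=-iY\cdot\varphi+\eta(Y)\varphi$ once more in direction $X$, using the Killing equation on the spinor side and $(\nabla_{X}\eta)(Y)=g(\nabla_{X}\xi,Y)=-g(\Phi(X),Y)$ on the form side. After applying $\Phi(Y)\cdot X\cdot\varphi=-X\cdot\Phi(Y)\cdot\varphi-2g(\Phi(Y),X)\varphi$ and invoking the skew-symmetry of $\Phi$, all mixed terms cancel and one is left with
\[(\nabla_{X}\Phi)(Y)\cdot\varphi=-ig(X,Y)\varphi-\eta(Y)X\cdot\varphi=\bigl(g(X,Y)\xi-\eta(Y)X\bigr)\cdot\varphi,\]
where I used $-i\varphi=\xi\cdot\varphi$ in the last step. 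Injectivity of Clifford multiplication then gives the Sasakian structure equation $(\nabla_{X}\Phi)(Y)=g(X,Y)\xi-\eta(Y)X$. Since the existence of a non-parallel real Killing spinor already forces $(M,g)$ to be Einstein, this identifies $(M,g)$ as Sasaki-Einstein with Reeb field $\xi=\tfrac{2\lambda}{n}A$. The main obstacle is the bookkeeping in this second-order expansion: one must carefully track the $\nabla_{X}Y$, $(\nabla_{X}\eta)(Y)$ and Clifford commutator contributions before the collapse to the clean Sasakian identity becomes visible.
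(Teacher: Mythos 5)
Your proposal is correct and follows essentially the same route as the paper: encode the zero mode condition as the pointwise identity $\xi\cdot\varphi=-i\varphi$ with $\xi=\frac{2\lambda}{n}A$, differentiate it against the Killing equation to get $(\nabla_X\xi)\cdot\varphi=iX\cdot\varphi-\eta(X)\varphi$, and read off $|\xi|\equiv 1$, $\nabla_\xi\xi=0$, the Killing property, $\Phi^2=-\rid$ on $\xi^\perp$ (hence $n$ odd), and finally the Sasakian equation. Your part (3) is in fact slightly cleaner than the paper's: by differentiating the identity $\Phi(Y)\cdot\varphi=-iY\cdot\varphi+\eta(Y)\varphi$, which holds for \emph{all} $Y$ (it is trivially true at $Y=\xi$), the $\Phi(\nabla_XY)$ and $\eta(\nabla_XY)$ terms cancel and injectivity of Clifford multiplication on the nowhere-vanishing $\varphi$ gives $(\nabla_X\Phi)(Y)=g(X,Y)\xi-\eta(Y)X$ in one step, whereas the paper splits $\nabla_VX$ into tangential and $\xi$-components, treats $X\in\Gamma(\xi^\perp)$ and $X=\xi$ separately, and extracts the result by pairing with a frame $e_j\cdot\varphi$; also note that the metric compatibility $g(\Phi X,\Phi Y)=g(X,Y)-\eta(X)\eta(Y)$, which the paper checks spinorially, follows for you in one line from skewness of $\Phi$ and $\Phi^2=-\rid+\eta\otimes\xi$.

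The one point you should make explicit is the regularity of $A$: in the theorem $A$ is a priori only an $L^n$ vector field, and you differentiate $\xi$ without justifying that it is differentiable. This is easily repaired with exactly the identity you already use: $\langle \xi,e_j\rangle|\varphi|^2=\rRe\langle \xi\cdot\varphi,e_j\cdot\varphi\rangle=\rIm\langle\varphi,e_j\cdot\varphi\rangle$, so $\xi$ agrees (a.e., hence everywhere after redefinition) with the smooth field $-\rIm\langle e_j\cdot\varphi,\varphi\rangle e_j$ built from the smooth Killing spinor $\varphi$; this is precisely how the paper secures smoothness of $A$ before differentiating.
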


{The proof is similar to \cite{Friedrich-Kath89_JDG}*{Theorem 2}, where they used a slightly different definition of Sasakian structure.}

\begin{proof}

    (1)
    Since $\varphi$ is a Killing spinor, without loss of generality we may assume  that $\varphi$ is a $-\frac{1}{2}$-Killing spinor with $\abs{\varphi}\equiv1$.  Since it solves \eqref{eq1.1},  we have
    \eq{\label{Killing_zero_mode}
    \frac n 2 \varphi = \D\varphi =i\lambda A\cdot \varphi.
    }
    Therefore
    \eq{\label{constants}
    \abs{A} \equiv \frac{n}{2\lambda}.
    }
    For any orthonormal basis $\{e_j\}$ we have
    \eq{
    \<A,e_j\> = \rRe\<A\cdot\varphi,e_j\cdot\varphi\> = -\lambda^{-1}\rRe\<i\D\varphi,e_j\cdot\varphi\> = -\lambda^{-1}\rIm\<e_j\cdot\D\varphi,\varphi\> = -\frac{n}{2\lambda}\rIm\<e_j\cdot\varphi,\varphi\>.
    }
    Hence
    \eq{\label{local_A}
    A = -\frac{n}{2\lambda}\rIm\<e_j\cdot\varphi,\varphi\> e_j.
    }
    Since $\varphi$, as a Killing spinor, is smooth, we see that $A$ is also smooth.
    
    At each point $p\in M$ we may assume $\nabla e_j|_p=0$. It is easy to see     \eq{
    \<\nabla_{e_k}A,e_j\> &= -\frac{n}{2\lambda}\rIm\<e_j\cdot\nabla_{e_k}\varphi,\varphi\> -\frac{n}{2\lambda}\rIm\<e_j\cdot\varphi,\nabla_{e_k}\varphi\>\\
    &= \frac{n}{4\lambda}\rIm\<e_j\cdot e_k\cdot\varphi,\varphi\> + \frac{n}{4\lambda}\rIm\<e_j\cdot\varphi,e_k\cdot\varphi\>\\
    &= \frac{n}{4\lambda}\rIm\<e_j\cdot e_k\cdot\varphi,\varphi\> - \frac{n}{4\lambda}\rIm\<e_k\cdot e_j\cdot\varphi,\varphi\>\\
    &= -\<\nabla_{e_j}A,e_k\>.
    }
    Therefore $A$ is a Killing vector field with $\abs{A} \equiv \frac{n}{2\lambda}$. It follows easily that $\nabla_A A=0$. 
    
    (2) 
    Set
    \eq{
    \xi \coloneqq \frac {2\lambda}n A.
    }
    From (1) we see that
    \eq{
    \abs{\xi}=1, \quad \nabla_{\xi}\xi=0, \quad \varphi = i\xi\cdot\varphi.
    }
    For any vector field $X\in\Gamma(TM)$, differentiating the last identity gives
    \eq{
    \nabla_X\varphi = i(\nabla_X \xi)\cdot\varphi + i\xi\cdot\nabla_X\varphi.
    }
    Since $\varphi$ is a $-\frac{1}{2}$-Killing spinor, we have
    \eq{
    -\frac{1}{2}X\cdot\varphi &= i(\nabla_X \xi)\cdot\varphi - \frac{i}{2}\xi\cdot X\cdot\varphi\\
    &= i(\nabla_X \xi)\cdot\varphi + i\<\xi,X\>\varphi + \frac{i}{2}X\cdot \xi\cdot\varphi\\
    &= i(\nabla_X \xi)\cdot\varphi + i\<\xi,X\>\varphi + \frac{1}{2} X\cdot\varphi.
    }
    Hence
    \eq{\label{eq_xi}
    (\nabla_X \xi)\cdot\varphi = -\<\xi,X\>\varphi + iX\cdot\varphi.
    }
   We define an endomorphism $\Phi:\Gamma(TM)\to \Gamma(TM)$ by
    \eq{
    \Phi(X) \coloneqq -\nabla_X \xi.
    }
    Denote by $\xi ^\perp$ the orthogonal complement  of the line bundle generated by $\xi$, i.e.
    \eq{
    \xi ^\perp \coloneqq \bigsqcup_{p\in M} \xi (p)^\perp,
    }
    where
    \eq{
    \xi (p)^\perp \coloneqq \{ X\in T_p M \,:\, \<\xi(p) ,X\>=0 \} \subset T_p M.
    }
    It is clear that for $X\perp\xi $, \eqref{eq_xi} implies
    \eq{\label{Killing_zero_mode_2}
    (\nabla_X \xi )\cdot\varphi = i X\cdot\varphi.\quad\forall X\in\Gamma(\xi ^\perp).
    }
    Moreover since $|\xi|=1$, we have $\<\Phi (X), \xi \>=0$. Hence $\Phi$ induces  an endomorphism $\Phi:\Gamma(\xi ^\perp)\to \Gamma(\xi ^\perp)$ by restriction. 
It is clear that \eqref{Killing_zero_mode_2} is equivalent to
    \eq{\label{Killing_zero_mode_3}
    \Phi(X)\cdot\varphi = -iX\cdot\varphi.
    }
    It follows
    \eq{\Phi^2(X)\cdot\varphi = -i\Phi(X)\cdot \varphi=-X\cdot \varphi}
    and hence
    \eq{
    \left(\Phi^2(X)+X\right)\cdot\varphi = 0.
    }
    Since $\varphi$ has constant length, we obtain
    \eq{
    \Phi^2 = -\rid \quad\hbox{on}\quad \Gamma(\xi ^\perp).
    }
    It is possible only when ${\rm rank}(\xi ^\perp) = n-1$ is even, namely $n$ is odd.

    (3) 
  Now assume $n$ is odd. We define 
     the dual one-form to $\xi$ by
    \eq{
    \eta(X)\coloneqq \<X, \xi\>,     \quad\forall X\in\Gamma(TM).
    }
       From (2) we know that
    \eq{
    \Phi(\xi) = -\nabla_\xi \xi   
    = 0.
    }
    Now we check that $(M,g,\xi,\eta,\Phi)$ is Sasaki-Einstein. It is well-known that the existence of a Killing spinor implies the manifold is Einstein. Hence it suffices to check that it is Sasakian.
    In the following, we always assume $X,Y\in\Gamma(\xi ^\perp)$.
    
    First, it is clear 
    \eq{
    \eta(\xi) =  1, \qquad 
    \eta(X) = \<X,\xi\> = 0.
    }
    Second,
    \eq{
    \Phi^2(X) = -X = -X + \eta(X)\xi, \qquad 
    \Phi^2(\xi) = 0 = -\xi + \eta(\xi)\xi.
    }
    Third,
    \begin{align}
    \<\Phi(X),\Phi(Y)\> &= \<\nabla_X \xi ,\nabla_Y \xi \> = \rRe\<\nabla_X \xi \cdot\varphi, \nabla_Y \xi \cdot\varphi\>\\
    &= \rRe\<i X\cdot\varphi, i Y\cdot\varphi\> = \<X,Y\> = \<X,Y\> - \eta(X)\eta(Y),\\    
    \<\Phi(X),\Phi(\xi)\> &= 0 = \<X,\xi\> - \eta(X)\eta(\xi),\\
    \<\Phi(\xi),\Phi(\xi)\> &= 0 = \<\xi,\xi\> - \eta(\xi)\eta(\xi).
    \end{align}
    Hence $(g,\xi,\eta,\Phi)$ is an almost contact structure. 
    
    Moreover, for any $X\in\Gamma(\xi ^\perp)$ and $V\in\Gamma(TM)$, we decompose
    \eq{
    \nabla_VX = (\nabla_VX)^T + f\xi \quad\hbox{for some function}\ f, 
    }
    where $(\nabla_VX)^T\in\Gamma(\xi ^\perp)$. 
    Differentiating \eqref{Killing_zero_mode_3} gives
    \begin{align}
        -i\nabla_V X\cdot\varphi + \frac{i}{2}X\cdot V\cdot\varphi &= \nabla_V\Phi(X)\cdot\varphi - \frac{1}{2}\Phi(X)\cdot V\cdot\varphi\\
        &= (\nabla_V\Phi)(X)\cdot\varphi + \Phi(\nabla_VX)\cdot\varphi + \frac{1}{2}V\cdot \Phi(X)\cdot\varphi + \<\Phi(X),V\>\varphi\\
        &= (\nabla_V\Phi)(X)\cdot\varphi - i(\nabla_VX)^T\cdot\varphi - \frac{i}{2}V\cdot X\cdot\varphi + \<\Phi(X),V\>\varphi.
    \end{align}
    Hence
    \eq{
    (\nabla_V\Phi)(X)\cdot\varphi &= -if\xi\cdot\varphi - i\<X,V\>\varphi - \<\Phi(X),V\>\varphi\\
    &= -f\varphi - i\<X,V\>\varphi - \<\Phi(X),V\>\varphi.
    }
    Given any orthonormal basis $\{e_j\}$, we have
    \eq{
    \<(\nabla_V\Phi)(X),e_j\> = \rRe\<(\nabla_V\Phi)(X)\cdot\varphi,e_j\cdot\varphi\> = -\<X,V\>\rRe\<i\varphi,e_j\cdot\varphi\> = -\<X,V\>\rIm\<e_j\cdot\varphi,\varphi\>.
    }
    Hence
    \eq{
    (\nabla_V\Phi)(X) = -\<X,V\>\rIm\<e_j\cdot\varphi,\varphi\>e_j = \<X,V\>\xi = \<X,V\>\xi - \eta(X)V,
    }
    where we have used \eqref{local_A}.
    Finally
    \eq{
    (\nabla_V\Phi)(\xi) = -\Phi(\nabla_V\xi) = \Phi^2(V) = -V + \eta(V)\xi = \<\xi,V\>\xi - \eta(\xi)V.
    }
    Therefore
    \eq{
    (\nabla_V\Phi)(W) = \<W,V\>\xi - \eta(W)V, \quad\forall V,W\in\Gamma(TM)
    }
    and hence we complete the proof.

\end{proof}

\begin{lemma}\label{lem7.11}
    Let $\varphi=\varphi_++\varphi_-$, where $\varphi_\pm$ is a $\mp  \frac  12 $-Killing spinor.
    If $(\varphi,A)$ is a zero mode with $\abs{\varphi}$ constant, then $\<\varphi_+,\varphi_-\>=0$ and
    $\varphi_+$ and $\varphi_-$ are both zero modes with the same $A$ and $\lambda$, i.e.
    \eq{\D \varphi _\pm =i\lambda A\cdot \varphi_\pm.
    }
    In particular,
    \eq{ \label{decomp} \frac n 2 \varphi_+=   i\lambda A\cdot \varphi_+, \quad 
     -\frac n 2 \varphi_-=   i\lambda A\cdot \varphi_-.
    }
\end{lemma}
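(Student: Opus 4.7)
The plan is to exploit the Clifford algebra structure to decouple the Killing identities $\D\varphi_\pm = \pm\tfrac{n}{2}\varphi_\pm$ (immediate consequences of $\nabla_X\varphi_\pm = \mp\tfrac{1}{2}X\cdot\varphi_\pm$) from the zero mode equation $\D\varphi = i\lambda A\cdot\varphi$. Substituting both into the zero mode equation produces the single key identity
\eq{
\tfrac{n}{2}\varphi_+ - i\lambda A\cdot\varphi_+ = \tfrac{n}{2}\varphi_- + i\lambda A\cdot\varphi_-,
}
and the plan is to apply the first-order operators $\tfrac{n}{2}\pm i\lambda A\cdot$ to both sides of this identity, exploiting the Clifford identity $A\cdot A\cdot\varphi = -|A|^2\varphi$, once $|A|$ has been identified.

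First, the Killing equation together with the anti-symmetry of Clifford multiplication gives $X|\varphi_\pm|^2 = 0$, so each $|\varphi_\pm|^2$ is constant. Since $|\varphi|^2 = |\varphi_+|^2 + |\varphi_-|^2 + 2\Re\langle\varphi_+,\varphi_-\rangle$ is constant by hypothesis, the function $\Re\langle\varphi_+,\varphi_-\rangle$ is also constant. On the other hand, self-adjointness of $\D$ forces $\varphi_\pm$, as elements of its $\pm\tfrac{n}{2}$-eigenspaces, to be $L^2$-orthogonal; taking the real part of $\int_M \langle\varphi_+,\varphi_-\rangle\,\rd V = 0$ then forces the constant $\Re\langle\varphi_+,\varphi_-\rangle$ to vanish identically on $M$.

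Next I would compute $|\D\varphi|^2$ in two ways. From $\D\varphi = \tfrac{n}{2}(\varphi_+ - \varphi_-)$ and the vanishing just established, $|\D\varphi|^2 = \tfrac{n^2}{4}|\varphi|^2$; from the zero mode equation and the isometry property of Clifford multiplication, $|\D\varphi|^2 = \lambda^2|A|^2|\varphi|^2$. Since $|\varphi|$ is a positive constant, this pins down $|A|^2 \equiv \tfrac{n^2}{4\lambda^2}$.

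With $|A|$ identified, applying $(\tfrac{n}{2} + i\lambda A\cdot)$ to the key identity collapses its left-hand side to $(\tfrac{n^2}{4} - \lambda^2|A|^2)\varphi_+ = 0$ while its right-hand side becomes $\tfrac{n}{2}(n\varphi_- + 2i\lambda A\cdot\varphi_-)$, yielding $\D\varphi_- = -\tfrac{n}{2}\varphi_- = i\lambda A\cdot\varphi_-$; the conjugate operator $(\tfrac{n}{2} - i\lambda A\cdot)$ gives the analogue for $\varphi_+$. These read $A\cdot\varphi_\pm = \mp\tfrac{in}{2\lambda}\varphi_\pm$, and combining them with Clifford anti-symmetry $\langle A\cdot\varphi_+,\varphi_-\rangle = -\langle\varphi_+,A\cdot\varphi_-\rangle$ forces $\langle\varphi_+,\varphi_-\rangle = 0$ pointwise. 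I expect the delicate step to be the global-to-pointwise bridge in the second paragraph, where $L^2$-orthogonality of Dirac eigenspaces upgrades the \emph{constancy} of $\Re\langle\varphi_+,\varphi_-\rangle$ to its actual vanishing, which is indispensable for pinning down $|A|$ and thereby unlocking the algebraic decoupling.
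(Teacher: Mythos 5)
Your argument is correct, and it reaches the conclusion by a somewhat different mechanism than the paper in the central step. Both proofs start identically: constancy of $\rRe\langle\varphi_+,\varphi_-\rangle$ (from constancy of $|\varphi|$, $|\varphi_\pm|$) plus $L^2$-orthogonality of the $\pm\frac n2$-eigenspinors of $\D$ gives $\rRe\langle\varphi_+,\varphi_-\rangle\equiv 0$, and both finish identically, upgrading to $\langle\varphi_+,\varphi_-\rangle=0$ pointwise via the (anti)symmetry of Clifford multiplication once \eqref{decomp} is known. Where you diverge is the decoupling step: the paper never determines $|A|$ at this stage; instead it pairs the rearranged identity $\frac n2\varphi_+-\frac n2\varphi_-=i\lambda A\cdot(\varphi_++\varphi_-)$ against $\varphi_+$ and against $i\lambda A\cdot\varphi_+$, adds the two resulting scalar identities so the cross terms cancel, and reads off $\bigl|\frac n2\varphi_+-i\lambda A\cdot\varphi_+\bigr|^2=0$ as a completed square. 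You instead first pin down $\lambda^2|A|^2\equiv\frac{n^2}{4}$ by computing $|\D\varphi|^2$ in two ways (legitimate, since $\varphi\not\equiv0$ with constant length forces $|\varphi|>0$), and then apply the pointwise operators $\frac n2\pm i\lambda A\cdot$ to the key identity, using $A\cdot A\cdot=-|A|^2$ to annihilate one side; this factorization is clean and has the small bonus of producing $|A|\equiv\frac n{2\lambda}$ already here (the paper derives it only later, in the proof of Theorem \ref{rest_of_proof}), while the paper's square-completion buys independence from any prior identification of $|A|$. Both routes are valid and of comparable length; no gaps.
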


\begin{proof}
       Each Killing spinor has constant length, so do $\varphi_+$ and $\varphi_-$.
    Since $\abs{\varphi}\equiv{\rm const}$, we have $\rRe\<\varphi_+,\varphi_-\>\equiv{\rm const}$. But $\varphi_+$ is $L^2$-orthogonal to $\varphi_-$, hence
    \eq{\label{lem_eq1}
    \rRe\<\varphi_+,\varphi_-\>=0.
    }
    Since $\varphi_\pm$ is a $\mp \frac 12 $-Killing spinor, we have
    \eq{
    \D\varphi_+ = \frac n 2 \varphi_+, \quad \D\varphi_- = -\frac n2 \varphi_-.
    }
    Hence the zero mode equation \eqref{eq1.1} implies
    \eq{\label{eq4.4a}
    \frac n 2 \varphi_+  -\frac n 2 \varphi_- = i\lambda A\cdot\varphi_+ + i\lambda A\cdot\varphi_-.
    }
      From \eqref{lem_eq1} and \eqref{eq4.4a}, it is elementary to see
       \eq{\label{hermitian_1}
    \frac {n^2} 4 \abs{\varphi_+}^2 &= \rRe\<  \frac n 2 \varphi_+ -\frac n 2 \varphi_- , \frac n2 \varphi_+\>
    = \rRe\< i\lambda A\cdot\varphi_+ + i\lambda A\cdot\varphi_-, \frac n2 \varphi_+ \>\\
    &= \rRe\< i\lambda A\cdot\varphi_+ , \frac n 2 \varphi_+ \> + \rRe\< i\lambda A\cdot\varphi_-, \frac n 2 \varphi_+ \>
    }
    and
        \eq{\label{hermitian_3}
    \lambda^2\abs{A}^2\abs{\varphi_+}^2 &= \rRe\<i\lambda A\cdot\varphi_+, i\lambda A\cdot\varphi_+ + i\lambda A\cdot\varphi_-\> = \rRe\<i\lambda A\cdot\varphi_+, \frac n2  \varphi_+ -\frac n 2 \varphi_-\>\\
    &= \rRe\<i\lambda A\cdot\varphi_+, \frac n  2 \varphi_+\> - \rRe\< i\lambda A\cdot\varphi_-, \frac n2\varphi_+ \>.
    }
       It follows that 
    \eq{
    \abs{\frac n 2 \varphi_+ - i\lambda A\cdot\varphi_+}^2
        = \frac{ n^2} 4  \abs{\varphi_+}^2 + \lambda^2\abs{A}^2\abs{\varphi_+}^2 - 2\rRe\<i\lambda A\cdot\varphi_+, \frac n2 \varphi_+\>  = 0.
    }
    Hence $ \D\varphi_+ =\frac n2 \varphi_+ = i\lambda A\cdot\varphi_+$ and 
    $\D\varphi_- = -\frac n2\varphi_- = i\lambda A\cdot\varphi_-$ by  \eqref{eq4.4a}.

    Finally, we show that $\<\varphi_+,\varphi_-\>=0$. Indeed, from \eqref{decomp}we have
    \eq{
    \frac  n 2 \,\rIm\<\varphi_+,\varphi_-\> = \rIm\<i\lambda A\cdot\varphi_+,\varphi_-\>
    }
    and 
    \eq{
    \frac n 2 \,\rIm\<\varphi_+,\varphi_-\> = -\rIm\<\varphi_+,i\lambda A\cdot\varphi_-\>
    = -\rIm\<i\lambda A\cdot\varphi_+,\varphi_-\>,
    }
    where in the last equality we have used the fact that $i\lambda A\cdot$ is symmetric.
    Hence $\rIm\<\varphi_+,\varphi_-\> = 0$.
    Together with \eqref{lem_eq1} we complete the proof.
    
\end{proof}

The following Proposition, together with Lemma \ref{lem7.11} and Theorem \ref{rest_of_proof}, completes the proof of Theorem \ref{thm7.12}.

\begin{proposition}\label{uniqueness}
    Let $(M,g,\xi,\eta,\Phi)$ be a Sasaki-Einstein manifold. If it admits a $\mp\frac{1}{2}$-Killing spinor $\varphi_{\pm}$ such that $\<\varphi_+,\varphi_-\>=0$ and 
    \eq{\label{uniqueness_eq1}
    \varphi_+ = i\xi\cdot\varphi_+,\quad \varphi_- = -i\xi\cdot\varphi_-,
    }
    then either $\varphi_+=0$ or $\varphi_-=0$.
\end{proposition}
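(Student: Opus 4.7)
The plan is to identify both $\varphi_+$ and $\varphi_-$ as pure spinors for one and the same maximal isotropic subspace of $T_\mathbb{C} M$, then use the pointwise one-dimensionality of the pure-spinor line together with the incompatible $\xi$-eigenvalues prescribed in \eqref{uniqueness_eq1} to force one of them to vanish.

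The first step is to derive the spinor identity $\Phi(X)\cdot\varphi_\pm=-iX\cdot\varphi_\pm$ for every $X\in\Gamma(\xi^\perp)$. For $\varphi_+$ this is identity \eqref{Killing_zero_mode_3} from the proof of Theorem \ref{rest_of_proof}. For $\varphi_-$ an entirely parallel computation works: differentiate the eigenvalue relation $\xi\cdot\varphi_-=i\varphi_-$ along $X\perp\xi$, substitute the $+\tfrac12$-Killing equation $\nabla_X\varphi_-=\tfrac12 X\cdot\varphi_-$ together with the Sasakian formula $\nabla_X\xi=-\Phi(X)$, and use the Clifford anticommutation $\xi\cdot X=-X\cdot\xi$.

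Extending $J\coloneqq\Phi|_{\xi^\perp}$ to $T_\mathbb{C}\xi^\perp$ and letting $T^{1,0}$ denote its $+i$-eigenspace, for $X\in T^{1,0}$ we have $\Phi(X)=iX$, so the identities above collapse to $X\cdot\varphi_\pm=0$. Since $T^{1,0}$ is an isotropic subspace of $T_\mathbb{C} M$ of complex dimension $k$ (where $n=2k+1$), which equals the maximal isotropic dimension $\lfloor n/2\rfloor=k$, it is a maximal isotropic subspace. The Chevalley description of the complex spinor representation of $\mathrm{Cl}(n,\mathbb{C})$ then yields that the annihilator
\eq{
K_p \coloneqq \{\psi\in\Sigma M_p\,:\,X\cdot\psi=0\ \forall X\in T^{1,0}\}
}
is one complex dimensional at every $p\in M$. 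In the odd-dimensional case at hand, this is seen explicitly from the realization $\Sigma M_p \cong \bigwedge T^{0,1}_p$ in which $T^{1,0}$ acts by contraction, whose common kernel is the one-dimensional $0$-th exterior part. Thus both $\varphi_+(p)$ and $\varphi_-(p)$ lie on the same complex line $K_p$.

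The final step is the eigenvalue check on $K_p$. For $\psi\in K_p$ and $X\in T^{1,0}$, the anticommutation $\xi\cdot X=-X\cdot\xi$ (valid since $\xi\perp X$) gives $X\cdot(\xi\cdot\psi)=-\xi\cdot X\cdot\psi=0$, so $\xi\cdot K_p\subseteq K_p$ and $\xi\cdot$ acts on $K_p$ by a scalar $c$; the relation $(\xi\cdot)^2=-\mathrm{id}$ forces $c\in\{+i,-i\}$, and $c$ is constant on $M$ by continuity and discreteness. Assuming $c=+i$ (the case $c=-i$ is symmetric), the hypothesis $\xi\cdot\varphi_+=-i\varphi_+$ together with $\varphi_+(p)\in K_p$ forces $-i\varphi_+(p)=+i\varphi_+(p)$ for every $p$, hence $\varphi_+\equiv0$; in the opposite case one concludes $\varphi_-\equiv0$. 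The main potentially subtle point is the Chevalley one-dimensionality fact in the odd-dimensional setting, but it follows immediately from the explicit creation/annihilation model indicated above.
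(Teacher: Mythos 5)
Your proof is correct, and it runs parallel to the paper's argument for most of its length before diverging at the final step. Your derivation that $(\Phi(X)+iX)\cdot\varphi_\pm=0$ for all $X\perp\xi$ is exactly the paper's \eqref{uniqueness_eq2}--\eqref{uniqueness_eq3} (and your use of $\nabla_X\xi=-\Phi(X)$ is legitimate: it follows from the paper's definition of a Sasakian structure, and in the application $\Phi$ is defined as $-\nabla\xi$ anyway). Your key algebraic input --- that the common annihilator $K_p$ of the maximal isotropic space $T^{1,0}$ is one complex dimensional --- is the same fact as the paper's Lemma \ref{lem_vacuum} (uniqueness of the vacuum), just phrased in the Chevalley pure-spinor language instead of via the explicit $F_j,F_j^*$ operators and the orthogonal basis $F_{j_1}\cdots F_{j_l}\varphi(p)$; both justifications reduce to the irreducibility of the rank-$2^{m}$ Clifford module under the fermionic creation/annihilation operators. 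Where you genuinely differ is the endgame: the paper concludes by writing $\varphi_-=f\varphi_+$ and invoking the hypothesis $\<\varphi_+,\varphi_-\>=0$ to force $f=0$, whereas you exploit the incompatible $\xi\cdot$-eigenvalues ($-i$ for $\varphi_+$, $+i$ for $\varphi_-$) on the one-dimensional line $K_p$. This buys you something: your argument never uses the orthogonality hypothesis, so you prove a slightly stronger statement. One small point of polish: the constancy of the scalar $c$ requires connectedness of $M$ and the (true, but unstated) continuity of $p\mapsto K_p$; you can sidestep this entirely by noting that nontrivial Killing spinors have constant length, so if both $\varphi_\pm$ were nontrivial they would be nowhere vanishing, and then at any single point both would span the line $K_p$ while carrying different $\xi$-eigenvalues, already a contradiction.
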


\begin{proof}
    As in the proof of \eqref{eq_xi}, differentiating \eqref{uniqueness_eq1} yields
    \eq{\label{uniqueness_eq2}
    (\Phi(X)+iX)\cdot\varphi_+ = 0,\quad\forall X\perp\xi
    }
    and similarly
    \eq{\label{uniqueness_eq3}
    (\Phi(X)+iX)\cdot\varphi_- = 0,\quad\forall X\perp\xi.
    }

      Hence $\varphi_+$ and $\varphi_-$ are both vacuums in the sense of Definition 
    \ref{def_vacuum} below.
    Without loss of generality, we may assume $\varphi_+$ is not zero.
    The uniqueness of the vacuum, Lemma \ref{lem_vacuum} below, then implies that
    \[ \varphi_- =f\varphi_+,\]
    for a complex function $f$. Now the assumption implies that $0 = \<\varphi_+,\varphi_-\> = f\abs{\varphi_+}^2 $. Hence $f=0$ and $\varphi_-=0$. This completes the proof.

\end{proof}

Inspired by the definition of vacuum in \cite{FL22}, we introduce
\begin{definition}\label{def_vacuum}
    Let $(M,g,\xi,\eta,\Phi)$ be a Sasakian spin manifold. A nowhere vanishing spinor field $\varphi$ is called a \textit{vacuum} w.r.t. $\Phi$, if
    \eq{
    (\Phi(X)+iX)\cdot\varphi = 0, \quad\hbox{for any vector field}\ X\perp\xi.
    }
\end{definition}

\begin{lemma}\label{lem_vacuum}
Let $\varphi$ be a vacuum.  
Then it is unique in the sense that if $\phi$ is another vacuum, then
\[ \phi = f\varphi\]
for some complex function $f$.

\end{lemma}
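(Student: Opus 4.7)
The plan is to prove the lemma by a pointwise algebraic argument: I will show that at every $p\in M$ the subspace
\[
V_p := \{\psi \in \Sigma_p M : (\Phi(X) + iX)\cdot \psi = 0 \text{ for all } X \in \xi(p)^\perp\}
\]
has complex dimension exactly one. Granting this, since $\varphi(p) \neq 0$ and $\phi(p) \in V_p$ at every $p$, we have $\phi(p) = f(p)\varphi(p)$ for a unique scalar $f(p)\in\mathbb{C}$, yielding the required complex function $f$ on $M$.

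To establish the dimension count, I will first invoke the almost contact identities $\Phi^2(X) = -X + \eta(X)\xi$ and $g(\Phi X,\Phi Y)=g(X,Y)-\eta(X)\eta(Y)$, which together imply that $\Phi$ restricted to $\xi^\perp$ is a compatible almost complex structure. In particular $\dim \xi(p)^\perp = n-1$ must be even, so $n=2m+1$ is odd (as is already known from Theorem \ref{rest_of_proof}). At each point $p$ I choose an orthonormal basis $\{e_1,\dots,e_{n-1}\}$ of $\xi(p)^\perp$ adapted to $\Phi$, i.e.\ with $e_{2k} = \Phi(e_{2k-1})$ for $k=1,\dots,m$, and set
\[
w_k := \Phi(e_{2k-1}) + ie_{2k-1} = e_{2k} + ie_{2k-1}.
\]
Since $\Phi(e_{2k}) + ie_{2k} = -e_{2k-1}+ie_{2k} = iw_k$, the vacuum condition collapses to the $m$ equations $w_k\cdot \psi = 0$ for $k=1,\dots,m$.

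Next, using the Clifford rule $e_j\cdot e_k + e_k\cdot e_j = -2\delta_{jk}$, a direct expansion yields $w_j\cdot w_k + w_k\cdot w_j = 0$ for all $j,k$, while the conjugate elements $w_k^*:= e_{2k}-ie_{2k-1}$ satisfy $w_j\cdot w_k^* + w_k^*\cdot w_j = -4\delta_{jk}$. Up to rescaling, the operators $w_k$ and $w_k^*$ therefore realize the $m$-fold fermionic canonical anticommutation relations on $\Sigma_p M$, whose complex dimension equals $2^{[n/2]}=2^m$. By the standard Fock space description of the unique irreducible representation of the CAR algebra, $\Sigma_p M$ is generated from a one-dimensional vacuum line---the joint kernel of all the annihilators $w_k$---under the action of the creation operators $w_k^*$. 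Hence $\dim_\mathbb{C} V_p = 1$, which completes the argument.

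The only real obstacle is the Clifford-algebraic bookkeeping with the correct sign conventions and making the appeal to the uniqueness of the Fock vacuum fully rigorous. Conceptually, however, the lemma is just the well-known uniqueness of the Fock vacuum for the CAR representation associated with the maximal isotropic subspace of $(T_pM)_\mathbb{C}$ spanned by the $w_k$, and this is precisely the algebraic mechanism from \cite{FL1} that the authors invoke.
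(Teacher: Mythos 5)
Your proof is correct and takes essentially the same route as the paper: an adapted basis on $\xi^\perp$, the fermionic Clifford operators $w_k$ (which are the paper's $F_k^*$ up to the factor $2i$), and the uniqueness of the Fock vacuum for the resulting CAR representation on $\Sigma_pM$, i.e.\ exactly the mechanism of \cite{FL1} that the paper invokes. The only cosmetic difference is that the paper makes the Fock-vacuum uniqueness explicit by showing the spinors $F_{j_1}\cdots F_{j_l}\varphi(p)$ form an orthogonal basis of $\Sigma_pM$ and pairing a second vacuum against it, whereas you quote the abstract uniqueness of the irreducible $2^m$-dimensional CAR module to conclude that the joint kernel of the annihilators is one-dimensional.
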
 
\begin{proof}
This is in fact a local property and then follows from the proof given in \cite{FL1}. For convenience of the reader we sketch  the proof.

Fix any point $p\in M$. Without loss of generality we may normalize $\abs{\varphi(p)}=1$. 
We choose an adapted orthonormal basis
    $\{e_j\}_{j=1}^{n}$ around $p$ such that 
    \eq{
    e_n \coloneqq \xi(p), \quad e_{2j} \coloneqq \Phi(e_{2j-1}), \quad 1\leq j\leq m\coloneqq\frac{n-1}{2},
    }
         and introduce the Clifford actions $F_j,F_j^*:\Sigma_p M\to\Sigma_p M$ defined by
    \eq{\label{uniqueness_eq4}
    F_j \coloneqq \frac{1}{2}(e_{2j-1} + ie_{2j})\cdot\,,
    \quad 
    F^*_j \coloneqq \frac{1}{2}(e_{2j-1} - ie_{2j})\cdot.
    }
    It is elementary to check that
    \eq{
    F_j F_k + F_k F_j = 0  =F_j^* F_k^* +F_k^* F_j^* , \quad F_j F_k^*  +F_k^*   F_j = -\delta_{jk}, \quad \forall\, 1\leq j,k\leq m,
    }
    and
    \eq{
    \<F_j\psi_1,\psi_2\> + \<\psi_1,F_j^*\psi_2\> = 0 \quad\hbox{for any two spinors }\psi_1,\psi_2.
    }
    Since $\varphi$ is a vacuum, we have
    \eq{\label{v1}
    F_j^*\varphi(p) = 0,\quad \forall\, 1\leq j \leq m.
    }
    As in \cite{FL1}*{Appendix A}, one can now show that the spinors $F_{j_1}\cdots F_{j_l}  \varphi(p)$ ($0\le l\le m$) form an orthogonal basis of $\Sigma_pM$. 
   If there is another vacuum $\phi$, not parallel to $\varphi$, then we may assume that  $\phi (p)\perp \varphi (p) $.
    It follows easily  that $\varphi$ is unique in the above sense. However, from \eqref{v1} we have
    \eq{\langle \phi(p), F_{j_1} \cdots F_{j_l} \varphi\rangle =\langle F_{j_1}^*  \phi (p) , F_{j_2}\cdots F_{j_l} \varphi \rangle =0,} 
   which is a contradiction. \end{proof}

\begin{remark}\label{rmk4.6}
    (1) Equivalently,
    \eq{\label{basis_rmk4.6}
    \{ e_{j_1}\cdot\cdots\cdot e_{j_l}\cdot\psi \,|\, 0\leq l \leq m, 1\leq j_1 < \cdots < j_l \leq 2m-1, \hbox{each } j_k \hbox{ is odd}\}
    }
    is an orthonormal basis of $\Sigma_pM$.

    (2) Note that if $(g,\xi,\eta,\Phi)$ is a Sasakian structure, then $(g,-\xi,-\eta,-\Phi)$ is also a Sasakian structure. Therefore, $\varphi$ defined by
    \eq{
    (-\Phi(X)+iX)\cdot\varphi = 0, \quad\hbox{for any vector field}\ X\perp\xi,
    }
    is the vacuum w.r.t. $-\Phi$, hence also unique.
\end{remark}

  Now we prove our second main result, which verifies the second ``if and only if" in Theorem \ref{main_thm}.

\begin{theorem}\label{thm5.4}
    Let $(M^n,g)$ $(n\geq2)$ be a closed spin manifold.     Then
    \eq{\label{ineq_thm5.2}
        \lambda^2 \geq \frac{n}{4(n-1)} Y(M,[g]).
    }
    Equality holds if and only if $(M,g)$ is Sasaki-Einstein with $A$ (up to scaling) as its Reeb field and $\varphi$  is a vacuum up to a conformal transformation.
 \end{theorem}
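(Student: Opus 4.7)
The inequality itself is immediate: it already appears in Theorem \ref{thm7.12}, which combines Theorem \ref{prop_gamma_a} with Proposition \ref{cor7.9}. So the real content of the statement is the biconditional characterizing equality, and my plan is to leverage what has already been built up in Section \ref{zero_modes_and_Sasaki-Einstein_manifolds} so that essentially no new computation is needed.

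For the forward direction, suppose equality holds in \eqref{ineq_thm5.2}. Theorem \ref{thm7.12} then says that, up to a conformal transformation, $\varphi$ is a non-parallel real Killing spinor, and the conformal invariance in Lemma \ref{conformal_invariance} lets me replace $(g,A,\varphi)$ by the conformal representative so that $\varphi$ itself is a Killing spinor solving \eqref{eq1.1}. Theorem \ref{rest_of_proof} then immediately delivers the Sasaki-Einstein structure with Reeb field $\xi=\frac{2\lambda}{n}A$. It remains to recognize that $\varphi$ is a vacuum in the sense of Definition \ref{def_vacuum}: from \eqref{Killing_zero_mode} we get $\varphi=i\xi\cdot\varphi$, and differentiating this identity exactly as in the derivation of \eqref{eq_xi} yields $(\Phi(X)+iX)\cdot\varphi=0$ for every $X\perp\xi$, which is the definition of a vacuum.

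For the reverse direction, suppose that, up to a conformal change, $(M,g)$ is Sasaki-Einstein, $A$ equals a nonzero multiple of the Reeb field $\xi$, and $\varphi$ is a vacuum with respect to $\Phi$. Rescaling the metric I may normalize ${\rm R}_g=n(n-1)$, and it is classical (Friedrich--Kath \cite{Friedrich-Kath}) that any simply-connected Sasaki-Einstein manifold carries a $-\tfrac12$-Killing spinor $\psi$ satisfying $\psi=i\xi\cdot\psi$; locally such a $\psi$ always exists, and a local analysis suffices since the vacuum condition is pointwise. This $\psi$ is itself a vacuum by the computation used above. The uniqueness Lemma \ref{lem_vacuum} forces $\varphi=f\psi$ for some function $f$, and elementary differentiation together with $(\Phi(X)+iX)\cdot\varphi=0$ shows $f$ is constant along $\xi^\perp$; a further check using $\nabla_\xi\xi=0$ and the Sasakian identity will show $f$ is constant, so $\varphi$ itself is a $-\tfrac12$-Killing spinor. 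The direct computation at the end of the proof of Theorem \ref{thm7.12} (leading to $\lambda^n=(n/2)^n\,\rVol(M,g)$ and Obata's theorem) then produces equality in \eqref{ineq_thm5.2}.

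The step I expect to be genuinely delicate is upgrading the pointwise uniqueness of the vacuum (Lemma \ref{lem_vacuum}) to a global statement forcing $\varphi$ to be a Killing spinor. Lemma \ref{lem_vacuum} only tells me that at each point the vacuum is unique up to a scalar factor, so $\varphi=f\psi$ for some locally defined function $f$; I need to show that $f$ is necessarily constant (and real up to a phase), using that both $\varphi$ and $\psi$ satisfy the same first-order system coming from the Sasakian structure together with the equation $\varphi=i\xi\cdot\varphi$. If this rigidity fails globally one could in principle conformally rescale and adjust by gauge using Lemma \ref{lem2.1}; so the fallback is to absorb any residual freedom into the conformal/gauge equivalence already built into the statement, which is consistent with the ``up to a conformal transformation'' clause.
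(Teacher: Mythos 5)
Your inequality and forward direction are fine and follow the paper's own route: equality gives a non-parallel real Killing spinor via Theorem \ref{thm7.12}, Theorem \ref{rest_of_proof} produces the Sasaki--Einstein structure with $\xi=\frac{2\lambda}{n}A$, and differentiating $\varphi=i\xi\cdot\varphi$ as in \eqref{eq_xi} gives the vacuum property. The genuine gap is in the reverse direction, precisely at the step you flag as delicate: your proposed mechanism for proving that $f$ (in $\varphi=f\psi$) is constant cannot work. The vacuum condition $(\Phi(X)+iX)\cdot\varphi=0$ and the identity $\varphi=i\xi\cdot\varphi$ are purely algebraic and are preserved when $\psi$ is multiplied by \emph{any} nowhere-vanishing complex function, so no amount of differentiating these identities (with or without $\nabla_\xi\xi=0$ and the Sasakian formula) can force $\rd f=0$. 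The indispensable input is the zero mode equation itself. The paper's argument is: lift everything to the universal cover, where Friedrich--Kim provides a global $-\tfrac12$-Killing spinor $\tilde\psi$ with $\tilde\psi=i\tilde\xi\cdot\tilde\psi$, hence $\D\tilde\psi=\tfrac n2\tilde\psi=i\lambda\tilde A\cdot\tilde\psi$; vacuum uniqueness gives $\tilde\varphi=f\tilde\psi$, and then comparing $\D\tilde\varphi=i\lambda\tilde A\cdot\tilde\varphi$ with $\D(f\tilde\psi)=f\D\tilde\psi+\rd f\cdot\tilde\psi$ cancels all terms except $\rd f\cdot\tilde\psi=0$, so $\rd f=0$ because $\tilde\psi$ has non-zero constant length. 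Only then is $\varphi$ a Killing spinor, and equality follows from Theorem \ref{thm7.12}.

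Two further points. First, your fallback of ``absorbing residual freedom into the conformal/gauge equivalence'' is not viable: a gauge change via Lemma \ref{lem2.1} replaces $A$ by $A+\nabla h$, which destroys the hypothesis that $A$ is (a multiple of) the Reeb field, and in any case multiplying by $e^{ih}$ with non-constant $h$, or by a function with non-constant modulus, cannot convert $f\psi$ into a Killing spinor; the rigidity must come from the zero mode equation, not from re-gauging. Second, your local-patch substitute for simple connectedness (local existence of $\psi$ with $\psi=i\xi\cdot\psi$ on evenly covered neighborhoods, plus locality of the Killing equation) can be made rigorous, but it is only an alternative packaging of the paper's passage to the universal cover and still requires the zero-mode-based constancy argument on each patch; as written, with the constancy step unsupported, the reverse implication is not proved.
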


\begin{proof}
     The ``only if'' part directly follows from Theorem \ref{thm7.12}, Theorem \ref{2_dim}, Theorem \ref{rest_of_proof} and Proposition \ref{uniqueness}. 

     Now we prove the ``if'' part.
     Assume (up to a conformal transformation) $(M,g)$ is a Sasaki-Einstein manifold with $A$ (or after renormalization $\xi=\frac{2\lambda}{n}A$) as its Reeb field and $\varphi$ is a vacuum. 

     Let $(\tilde{M},\tilde{g})$ be the universal cover of $(M,g)$. Then $(\tilde{M},\tilde{g})$ is simply connected and also Sasaki-Einstein. We lift $(\varphi,A, \xi, g)$ to $\tilde M$ and denote them by $(\tilde \varphi,  \tilde A, \tilde \xi, \tilde g)$.
     It is clear that $(\tilde{\varphi},\tilde{A})$ is also a zero mode and $\tilde{\varphi}$ is also a vacuum on $(\tilde{M},\tilde{g})$.
A result from Friedrich-Kim \cite{Friedrich_Kim_2000}*{Theorem 6.3 and Corollary 6.1} implies that $(\tilde{M},\tilde{g})$, up to scaling, admits a non-parallel $-\frac{1}{2}$-Killing spinor $\tilde \psi$ such that (for short we still denote $\tilde{\cdot}$ by $\cdot$)
    \eq{ \label{eq_4.9a}
        \tilde \psi = i\tilde \xi \cdot \tilde \psi.
    }
  
     Differentiating it as in the proof of Theorem \ref{rest_of_proof}, we see that $\tilde \psi$ is a vacuum.
   The uniqueness,  Lemma \ref{lem_vacuum},  then implies that
    \eq{\label{up_to_a_complex_function}
    \tilde{\varphi} = f \tilde \psi
    }
    for some complex function $f$ on $\tilde M$. 
         Using the zero mode equation \eqref{eq1.1}, i.e., $\tilde\D  \tilde \varphi =i\tilde A\cdot \tilde \varphi$, one can see that \eqref{up_to_a_complex_function} yields 
       \eq{
    i\lambda \tilde A\cdot\tilde{\varphi} = \tilde\D\tilde{\varphi} = f\tilde\D\tilde\psi + \rd f\cdot\tilde\psi = \frac{n}{2}f\tilde\psi + \rd f\cdot\tilde\psi = i\lambda f \tilde A\cdot \tilde\psi + \rd f\cdot\tilde\psi = i\lambda \tilde A\cdot\tilde{\varphi} + \rd f\cdot\tilde\psi
    }
    where we have used \eqref{eq_4.9a} in the fourth equality. 
    It follows $\rd f\cdot \tilde\psi = 0$. Therefore $\rd f=0$ since $\tilde\psi$ has non-zero constant length, and hence $f={\rm const}$. 
    Therefore, $\tilde{\varphi}$ (hence $\varphi$) is a non-parallel real Killing spinor. The conclusion then follows from Theorem \ref{thm7.12}.
\end{proof}

    \begin{remark} On the odd-dimensional sphere $\SS^n$, our proof recovers the solutions given by Loss-Yau \cite{Loss_Yau_86} and Dunne-Min \cite{Dunne_Min_08}. The slight difference to the proof given in \cite{Frank_Loss_2024} lies in the decomposition of the spinor field $\varphi$ by $\varphi_+$ and $\varphi_-$, which are $\mp \frac 12 $ Killing spinors w.r.t  the spherical metric $\frac 4 {(1+|x|^2 ) ^2} dx^2$. Then the uniqueness of vacuum shows that either $\varphi_+=0$ or $\varphi_-=0$.
\end{remark}

Finally, Theorem \ref{thm7.12}, Theorem \ref{2_dim} and Theorem \ref{thm5.4} complete the proof of our main results, Theorem \ref{main_thm} and Theorem \ref{main_thm_2_dim}.

\section{Generalizations}\label{sec5}

Our approach can be generalized to consider the following problem. 
Let $k=0, 1, \cdots, n$ and $\alpha\in\Omega^k$ be a $k$-form. We consider the following  $k$-th ``zero mode'' equation
\eq{\label{general_zero_mode}
\D \varphi = i^{[\frac{k+1}{2}]} \alpha \cdot \varphi,
}
where the choice of power $[\frac{k+1}{2}]$ ensures that $i^{[\frac{k+1}{2}]}\alpha\cdot$ is symmetric, namely
\eq{
\<i^{[\frac{k+1}{2}]}\alpha\cdot\psi_1,\psi_2\> = \<\psi_1,i^{[\frac{k+1}{2}]}\alpha\cdot\psi_2\>.
}
Here we use the definition of a $k$-form acting on a spinor field as follows (see also \cite{Baum_Book_90})
\eq{
\alpha\cdot \varphi
\coloneqq \sum_{1 \leq i_1 < \cdots < i_k \leq n}
  \alpha(e_{i_1}, \cdots, e_{i_k})
  \, e_{i_1} \cdot \cdots \cdot e_{i_k}\cdot \varphi, \quad \forall\,\alpha\in\Omega^k.
}
If $\alpha=f$ is a $0$-form, it is clear that $\alpha \cdot \varphi = f\varphi$.
Since we are interested in finding a lower bound of $\int \abs{\alpha}^n$, without loss of generality, we may assume that $
 \int \abs{\alpha}^n<\infty.$
 Similarly we  consider the following equivalent problem
 \eq{\label{k-form_eq}
 \D \varphi = i^{[\frac{k+1}{2}]}\lambda \alpha\cdot \varphi}
 for a $k$-form $\alpha\in L^n$ with $\int \abs{\alpha}^n =1$, $\lambda>0$ and $\varphi\not\equiv0$.

\begin{lemma}[Conformal invariance]\label{k-form_conformal_invariance}
     Equation \eqref{k-form_eq} is conformally invariant in the following sense: Let $(\varphi,\alpha)$ be a solution to \eqref{k-form_eq} w.r.t. metric $g$ and
     $\tilde g =h^{\frac 4{n-2}} g$. 
     Define 
     \eq{
     \varphi_{\tilde{g}} \coloneqq h^{-\frac{n-1}{n-2}}\tilde{\varphi}, \qquad \alpha_{\tilde{g}} \coloneqq  h^{-\frac{2}{n-2}}\tilde{\alpha} := h^{\frac{2k-2}{n-2}}\alpha.
     } 
     Then $(\varphi_{\tilde g},\alpha_{\tilde g})$ is a solution to \eqref{k-form_eq} w.r.t. metric $\tilde{g}$, i.e.
     \eq{
\D_{\tilde{g}}\varphi_{\tilde{g}} = i^{[\frac{k+1}{2}]}\lambda  \alpha_{\tilde{g}}\,\tilde\cdot\,\varphi_{\tilde{g}},
     }
     with the same $\lambda$.
     \end{lemma}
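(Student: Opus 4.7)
The proof should parallel Lemma \ref{conformal_invariance}, with the only extra work consisting of bookkeeping powers of $h$ for the $k$-form Clifford action. The main engine is the conformal change rule for the Dirac operator, \eqref{conf_trans}, applied to $\varphi$. Since $\varphi_{\tilde{g}} = h^{-(n-1)/(n-2)}\tilde{\varphi}$, it gives
\eq{
\D_{\tilde{g}}\varphi_{\tilde{g}} = h^{-(n+1)/(n-2)}\widetilde{\D_g\varphi} = i^{[(k+1)/2]}\lambda\, h^{-(n+1)/(n-2)}\,\widetilde{\alpha\cdot\varphi}.
}
So the whole proof reduces to showing $\widetilde{\alpha\cdot\varphi} = h^{(n+1)/(n-2)}\alpha_{\tilde{g}}\,\tilde{\cdot}\,\varphi_{\tilde{g}}$, equivalently $h^{-(n+1)/(n-2)}\widetilde{\alpha\cdot\varphi} = h^{-(n-1)/(n-2)}\alpha_{\tilde{g}}\,\tilde{\cdot}\,\tilde{\varphi}$.

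The plan is to evaluate both sides in adapted orthonormal frames. Pick $\{e_j\}$ orthonormal for $g$, so $\{\tilde{e}_j\coloneqq h^{-2/(n-2)}e_j\}$ is orthonormal for $\tilde{g}$. From the already-recalled identity $\widetilde{X\cdot\psi} = \tilde{X}\,\tilde{\cdot}\,\tilde{\psi}$ for a single vector, I iterate $k$ times to obtain
\eq{
\widetilde{e_{j_1}\cdot\cdots\cdot e_{j_k}\cdot\varphi} = \tilde{e}_{j_1}\,\tilde{\cdot}\,\cdots\,\tilde{\cdot}\,\tilde{e}_{j_k}\,\tilde{\cdot}\,\tilde{\varphi}.
}
Expanding $\alpha\cdot\varphi$ in $\{e_j\}$ and applying this yields
\eq{
\widetilde{\alpha\cdot\varphi} = \sum_{i_1<\cdots<i_k}\alpha(e_{i_1},\ldots,e_{i_k})\,\tilde{e}_{i_1}\,\tilde{\cdot}\,\cdots\,\tilde{\cdot}\,\tilde{e}_{i_k}\,\tilde{\cdot}\,\tilde{\varphi},
}
while expanding $\alpha_{\tilde{g}}\,\tilde{\cdot}\,\tilde{\varphi}$ in $\{\tilde{e}_j\}$ gives the analogous sum with coefficients $\alpha_{\tilde{g}}(\tilde{e}_{i_1},\ldots,\tilde{e}_{i_k})$.

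Matching coefficients in the target identity forces
\eq{
\alpha_{\tilde{g}}(\tilde{e}_{i_1},\ldots,\tilde{e}_{i_k}) = h^{-2/(n-2)}\,\alpha(e_{i_1},\ldots,e_{i_k}),
}
which, combined with multilinearity and $\tilde{e}_j = h^{-2/(n-2)}e_j$, gives the covariant-tensor identity $\alpha_{\tilde{g}} = h^{(2k-2)/(n-2)}\alpha$; this is exactly the definition in the statement, and one readily checks $\alpha_{\tilde{g}} = h^{-2/(n-2)}\tilde{\alpha}$ where $\tilde{\alpha}$ denotes the push-forward of $\alpha$ under the bundle isometry $(TM,g)\to(TM,\tilde{g})$. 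As a sanity check, the cases $k=0$ and $k=1$ recover respectively the scalar factor $h^{-2/(n-2)}$ and Lemma \ref{conformal_invariance}. There is no real obstacle here; the only thing to be careful about is distinguishing the covariant-tensor representation of $\alpha$ from its metric dual (since the exponent $(2k-2)/(n-2)$ depends on which representation one uses), but with the definition written as in the statement the computation goes through verbatim, with the same $\lambda$.
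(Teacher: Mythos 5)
Your proposal is correct and follows essentially the same route as the paper: the conformal covariance \eqref{conf_trans} of the Dirac operator combined with the frame-wise identity $\widetilde{\alpha\cdot\varphi}=\tilde{\alpha}\,\tilde{\cdot}\,\tilde{\varphi}$ (obtained by iterating $\widetilde{X\cdot\psi}=\tilde{X}\,\tilde{\cdot}\,\tilde{\psi}$), with the exponent bookkeeping exactly matching the definition $\alpha_{\tilde g}=h^{\frac{2k-2}{n-2}}\alpha$. The only item you should add is the one-line normalization check $\abs{\alpha_{\tilde g}}_{\tilde g}=h^{-\frac{2}{n-2}}\abs{\alpha}_g$, hence $\int\abs{\alpha_{\tilde g}}^n_{\tilde g}\,\rd V_{\tilde g}=\int\abs{\alpha}^n_g\,\rd V_g=1$, since \eqref{k-form_eq} is posed under the constraint $\norm{\alpha}_{L^n}=1$ and this is what makes the claim ``with the same $\lambda$'' meaningful; the paper's proof records precisely this computation.
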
 
     
     \begin{proof}
   Similarly as in Section \ref{sec3}  we have \eq{
    \D_{\tilde{g}}\varphi_{\tilde{g}} &= h^{-\frac{n+1}{n-2}}\widetilde{\D_g\varphi} = i^{[\frac{k+1}{2}]}\lambda h^{-\frac{n+1}{n-2}}\widetilde{\alpha\cdot\varphi} = i^{[\frac{k+1}{2}]}\lambda h^{-\frac{n+1}{n-2}}\tilde{\alpha}\,\tilde{\cdot}\,\tilde{\varphi}\\
    &= i^{[\frac{k+1}{2}]}\lambda h^{-\frac{n+1}{n-2}}\cdot h^{\frac{2}{n-2}}\alpha_{\tilde{g}}\,\tilde{\cdot}\,h^{\frac{n-1}{n-2}}\varphi_{\tilde{g}} = i^{[\frac{k+1}{2}]}\lambda \alpha_{\tilde{g}}\,\tilde{\cdot}\,\varphi_{\tilde{g}}.
    }
    Moreover, we have
     \eq{  \abs{\alpha_{\tilde{g}}}_{\tilde g}  = h^{-\frac{2k}{n-2}} \abs{h^{\frac{2k-2}{n-2}}\alpha}_g   = h^{\frac {-2}{n-2}}\abs{\alpha}_g , 
    } and hence
    \eq{
    \int \abs{\alpha_{\tilde{g}}}^n_{\tilde g}  \rd V_{\tilde{g}} = \int \big( h^{\frac{-2}{n-2}} \abs{\alpha}_g \big)^n \cdot h^{\frac{2n}{n-2}}\, \rd V_g = \int \abs{\alpha}_g^n \rd V_g = 1.
    } 
    \end{proof}

With this Lemma we can define a family of conformal invariants w.r.t. $\alpha$ as in Section \ref{sec3}, which are in fact the same and which  is related to the ordinary Yamabe constant $Y(M,[g])$.
\begin{remark}
    In particular, for $k=0$, $\alpha=f$ is a function and $f_{\tilde{g}}=h^{-\frac{2}{n-2}}f$. For $k=1$, $\alpha$ is a 1-form and $\alpha_{\tilde{g}}=\alpha$. Let  $ \alpha ^\sharp $  be the dual vector of $\alpha$ w.r.t.  $g$ and
     $ \alpha ^{\sharp_{\tilde g}} $  be the dual vector of $\alpha$ w.r.t. $\tilde g$. We have
    \eq{
    \alpha_{\tilde{g}}^{\sharp_{\tilde{g}}}=h^{-\frac{4}{n-2}}\alpha_{\tilde{g}}^\sharp=h^{-\frac{4}{n-2}}\alpha^\sharp,
    }
    which coincides with the conformal change for vector fields given in  Lemma \ref{conformal_invariance}.
\end{remark}


The case $k=1$ was studied before. Now we consider the case $k=0$ and $k=2$ separately. These cases are in principle quite similar to case $k=1$ considered above, but are slightly different.
Case $k\ge 3$ involves the subtle algebraic structure  of spinors and differential forms, therefore  we leave it  in a forthcoming paper.

\subsection{Case \texorpdfstring{$k=0$}{k=0}} This case has its own  interest. In this case we 
consider
\eq{\label{eq_k=0}
\D\varphi = f\varphi,
}
where $f\in L^n$ is a function.

Our goal is to prove
\eq{\label{casek=0}
\norm{f}_n^2 \geq \frac{n}{4(n-1)}Y(M,[g]).
}
It is also convenient to consider the following equivalent problem
 \eq{\label{0-form_eq}
 \D \varphi = \lambda f\varphi}
 for a function $f\in L^n$ with $\int f^n =1$, $\lambda>0$ and $\varphi\not\equiv0$. The desired inequality can be written as
 \eq{\label{0-form_ineq}
 \lambda^2 \ge\frac{n}{4(n-1)} Y(M, [g]).
 }

 \begin{theorem}\label{thm5.3}
     Let $(M^n,g)$ $(n\geq2)$ be a closed manifold with a spin structure $\sigma$ and
$(\varphi,f)$ a non-trivial solution to \eqref{0-form_eq} with $\varphi\in L^p$  $(p>\frac{n}{n-1})$.
Then
\eq{
\lambda^2 \ge \frac {n}{4(n-1)} Y(M,[g]),
}
with equality if and only if $f$ is constant and $\varphi$ is a non-parallel real Killing spinor up to a conformal transformation.
 \end{theorem}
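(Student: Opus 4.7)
The plan is to run the proof of Theorems \ref{prop_gamma_a}--\ref{thm7.12} essentially verbatim, with the scalar weight $f^{2}$ replacing $|A|_{g}^{2}$ throughout. By Lemma \ref{k-form_conformal_invariance} specialised to $k=0$, the rule $f_{\tilde g}=h^{-2/(n-2)}f$ is formally identical to the rule for $|A|_{g}$, so the invariants
\[
\mu_{a}(g,f)\coloneqq\inf_{u\not\equiv 0}\frac{\int\bigl(a\tfrac{4(n-1)}{n-2}|\nabla u|_{g}^{2}+{\rm R}_{g}u^{2}\bigr)\rd V_{g}}{\int f^{2}u^{2}\rd V_{g}},\qquad \gamma_{a}([g,f])\coloneqq\sup_{\tilde g\in[g]}\mu_{a}(\tilde g,f_{\tilde g})
\]
are well-defined, and Propositions \ref{prop7.7} and \ref{cor7.9} carry over without modification to give $\gamma_{a}([g,f])=Y([g,f])\ge Y(M,[g])$.

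For the inequality, substitute $\D\varphi=\lambda f\varphi$ into the Schr\"odinger--Lichnerowicz identity \eqref{Sch-Lich}, obtaining
\[
0=\int\Bigl({\rm R}-\tfrac{4(n-1)}{n}\lambda^{2}f^{2}\Bigr)|\varphi|^{2}+4\int|P\varphi|^{2}.
\]
Repeating the contradiction argument of Theorem \ref{prop_gamma_a}---namely, supposing $\lambda^{2}\le\tfrac{n}{4(n-1)}(Y([g,f])-\delta)$ for some $\delta>0$, passing to the metric $\tilde g_{a}$ that achieves $\gamma_{a}([g,f])$ for small $a>0$, then inserting the weighted eigenvalue bound $\int\bigl(a\tfrac{4(n-1)}{n-2}|\nabla|\varphi||^{2}+{\rm R}|\varphi|^{2}\bigr)\ge Y([g,f])\int f^{2}|\varphi|^{2}$ and combining with Kato's inequality and \eqref{eq1.4}---yields a contradiction for $a$ small enough, and hence $\lambda^{2}\ge\tfrac{n}{4(n-1)}Y(M,[g])$.

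The rigidity direction is where the scalar case becomes strictly simpler than Theorem \ref{thm7.12}, and is the step I expect to be the main (but pleasantly easy) novelty. Equality forces, exactly as in that proof, that on the conformal metric $\tilde g$ which achieves $Y([g,f])$ one has ${\rm R}_{\tilde g}=\tfrac{4(n-1)}{n}\lambda^{2}f_{\tilde g}^{2}$ and $\varphi_{\tilde g}$ is a twistor spinor of constant length; Lichnerowicz's decomposition then gives $\varphi_{\tilde g}=\varphi_{+}+\varphi_{-}$ with $\varphi_{\pm}$ a $\mp\tfrac{1}{2}$-Killing spinor on the Einstein manifold $(M,\tilde g)$. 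Now Lemma \ref{general_seperation} with $k=0$ (so that $i^{[(k+1)/2]}=1$) gives the identity $\tfrac{n}{2}\varphi_{\pm}=\pm\lambda f_{\tilde g}\varphi_{\pm}$. Since any Killing spinor is either identically zero or nowhere vanishing, $\varphi_{+}\not\equiv 0$ would force $\lambda f_{\tilde g}\equiv\tfrac{n}{2}$ throughout $M$ while $\varphi_{-}\not\equiv 0$ would force $\lambda f_{\tilde g}\equiv-\tfrac{n}{2}$; these are incompatible, so exactly one of $\varphi_{\pm}$ vanishes. Hence $\varphi_{\tilde g}$ is itself a non-parallel real Killing spinor and $f_{\tilde g}$ is the constant $\pm\tfrac{n}{2\lambda}$. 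This two-line observation replaces, and entirely bypasses, the Sasaki--Einstein and vacuum-uniqueness machinery of Section \ref{zero_modes_and_Sasaki-Einstein_manifolds} required in the vectorial case.

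Conversely, if up to a conformal transformation $\varphi$ is a $-\tfrac{1}{2}$-Killing spinor and $f\equiv c$, then $\D\varphi=\tfrac{n}{2}\varphi$ forces $\lambda c=\tfrac{n}{2}$; combining with $\int f^{n}\rd V_{g}=1$ yields $\lambda^{2}=\tfrac{n^{2}}{4}\rVol(M,g)^{2/n}=\tfrac{n}{4(n-1)}{\rm R}_{g}\rVol(M,g)^{2/n}$, which Obata's theorem identifies with $\tfrac{n}{4(n-1)}Y(M,[g])$ exactly as in the closing paragraph of Theorem \ref{thm7.12}.
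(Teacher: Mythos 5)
Your inequality argument and your converse coincide with the paper's own proof: the paper likewise reruns the Section \ref{sec3} scheme with $|A|_g^2$ replaced by $f^2$ (the conformal weight $f_{\tilde g}^2=h^{-4/(n-2)}f^2$ transforms exactly like $|A_{\tilde g}|^2_{\tilde g}$, so Propositions \ref{prop7.7} and \ref{cor7.9} carry over), and it closes the converse with Obata exactly as in the last paragraph of Theorem \ref{thm7.12}. Where you genuinely differ is the rigidity step. The paper never invokes the Lichnerowicz decomposition here: from $P_{\tilde g}\varphi_{\tilde g}=0$ and constant length it concludes $(M,\tilde g)$ is Einstein, hence ${\rm R}_{\tilde g}\equiv{\rm const}$, and the equality relation ${\rm R}_{\tilde g}=\frac{4(n-1)}{n}\lambda^2 f_{\tilde g}^2$ then forces $f_{\tilde g}\equiv{\rm const}$ immediately; a twistor spinor satisfying $\D\varphi=\lambda f\varphi$ with $f$ a real constant is directly a real Killing spinor (with Killing number $-\lambda f/n$), so no splitting into $\varphi_\pm$ is needed. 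You instead decompose $\varphi_{\tilde g}=\varphi_++\varphi_-$, apply Lemma \ref{general_seperation} with $k=0$ to get the pointwise identities $\frac{n}{2}\varphi_\pm=\pm\lambda f_{\tilde g}\varphi_\pm$, and use that a non-trivial Killing spinor is nowhere vanishing to force $\lambda f_{\tilde g}\equiv\frac{n}{2}$ or $\equiv-\frac{n}{2}$, so that one summand vanishes and $f_{\tilde g}$ is constant. This is correct: Lemma \ref{general_seperation} is stated for all $k$ and its proof goes through verbatim when the Clifford action is replaced by multiplication by the real function $\lambda f$, and your argument isolates cleanly why the scalar case escapes the vacuum-uniqueness machinery of Section \ref{zero_modes_and_Sasaki-Einstein_manifolds} (the two candidate identities $\lambda f\equiv\pm\frac{n}{2}$ are pointwise incompatible, unlike the vectorial case where both components can survive). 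The paper's route is slightly shorter, since constancy of $f$ falls out of Einstein-ness before any spinorial splitting; yours buys a uniform template parallel to the $k=1$ and $k=2$ treatments. One cosmetic point: to quote Lemma \ref{general_seperation} with Killing numbers $\mp\frac12$ you should first rescale $\tilde g$ so that ${\rm R}_{\tilde g}=n(n-1)$, the normalization the paper adopts throughout; this is harmless since both sides of the equality case scale consistently.
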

We remark that in the equality case, $(M,g)$  needs not to be Sasakian.  In fact, on any $n$ sphere $\SS^n$, there is $f$ (in fact $f$ is a constant) such that 
\eqref{eq_k=0} has a non-trivial solution and equality in \eqref{casek=0} holds. See \cite{Frank_Loss_2024}.

 \begin{proof}[Proof of Theorem \ref{thm5.3}]
  We use the same argument in Section \ref{sec3} by replacing $\abs{A}^2$ by $f^2 $ and insert \eqref{0-form_eq}
  into \eqref{eq_b1} to obtain
  \eq{
0= \int ({{\rm R}} - \frac{4(n-1)}{n}\lambda^2f^2)\abs{\varphi}^2 +4 \int \abs{P\varphi}^2,
} like \eqref{reminder_twistor}. Then we follow the contradiction argument given in Section \ref{sec3} to
  obtain inequality \eqref{0-form_ineq}, as  in Remark \ref{rmk}. We skip it, 
    and only prove the equality case. Assume equality holds. As in the proof of Theorem \ref{thm7.12}, we have
     \eq{
     {\rm R}_{\tilde g} = Y(M,[g])f_{\tilde g}^2 = \frac{4(n-1)}{n}\lambda^2 f_{\tilde g}^2, \qquad P_{\tilde g}\varphi_{\tilde g}=0
     }
     for some $\tilde g\in[g]$. Therefore $\varphi_{\tilde g}$ is a twistor spinor and hence by \eqref{why_varphi_constant_lenth} one can see $\abs{\varphi_{\tilde g}}_{\tilde g}\equiv{\rm const}$ and $(M,\tilde g)$ is Einstein.
     For simplicity of notation we omit the subscript $\tilde g$.
     Since $(M,g)$ is Einstein, ${\rm R}\equiv{\rm const}$, hence $f\equiv{\rm const}$. Using Obata's theorem we have
     \eq{
     {\rm R}\equiv Y(M,[g])\rVol^{-\frac{2}{n}},\qquad f\equiv \pm\rVol^{-\frac{1}{n}}.
     }
     It follows
     \eq{
     \D\varphi = \lambda f\varphi = \pm\sqrt{\frac{n{\rm R}}{4(n-1)}}\varphi.
     }
     Using the definition of twistor spinor, one can see that $\varphi$ is a $\mp\sqrt{\frac{{\rm R}}{4n(n-1)}}$-Killing spinor.
     
If $\varphi$ is a $b$-Killing spinor with $b\not=0$, then
     \eq{
     \D\varphi = -nb\varphi = \mp\sqrt{\frac{n{\rm R}}{4(n-1)}}\varphi = \mp\sqrt{\frac{n}{4(n-1)}Y(M,[g])}\rVol^{-\frac{1}{n}}\varphi \eqcolon \lambda f\varphi.
     }
     Hence we have equality and  complete the proof.
 \end{proof}

 \begin{remark}
     When $(M,g)=(\SS^n,g_{{\rm st}})$, Theorem \ref{thm5.3} was proved in \cite{Frank_Loss_2024}. We remark that due to the conformal invariance, the problem in $\SS^n$ is equivalent to the one in $\R^n$. If $f=1$, \eqref{0-form_eq} is the eigenvalue equation for the Dirac operator in $(M,g)$ and Theorem \ref{thm5.3} is in fact the result of Hijazi \cite{H86}.
     If $f=|\varphi|^{\frac 2{n-1}}$, it was proved in \cite{Jurgen_Julio-Batalla}, with a similar approach.
 \end{remark}

\subsection{Case \texorpdfstring{$k=2$}{k=2}} Now in this subsection we consider $k=2$, i.e., 
\eq{
\D\varphi = i\alpha\cdot\varphi,
}
where $\alpha$ is a 2-form. 
Remember  a $k$-form acts on a spinor field as follows
\eq{
\omega \cdot \varphi
\coloneqq \sum_{1 \leq i_1 < \cdots < i_k \leq n}
  \omega(e_{i_1}, \cdots, e_{i_k})
  \, e_{i_1} \cdot \cdots \cdot e_{i_k}\cdot \varphi, \quad \forall\,\omega\in\Omega^k.
}
In particular, we have
\eq{
X^\flat\cdot\varphi = X\cdot\varphi, \quad\forall X\in\Gamma(TM)
}
and
\eq{
X^\flat\w Y^\flat\cdot\varphi = X\cdot Y\cdot\varphi, \quad\forall X\perp Y.
}

Our goal is to prove
\eq{
\norm{\alpha}_n^2 \geq \Big[\frac{n}{2}\Big]^{-1} \frac{n}{4(n-1)}Y(M,[g]).
}
Since we are interested in finding a lower bound of $\int \abs{\alpha}^n$, without loss of generality, we may assume as above that $
 \int \abs{\alpha}^n<\infty.$
 Similarly we  consider the following equivalent problem
 \eq{\label{2-form_eq}
 \D \varphi = i\lambda \alpha\cdot \varphi}
 for a 2-form $\alpha\in L^n$ with $\int \abs{\alpha}^n =1$, $\lambda>0$ and $\varphi\not\equiv0$. Then the desired inequality  can be written as
 \eq{\label{2-form_ineq}
 \lambda^2 \ge\frac{n}{4m(n-1)} Y(M, [g]),
 }
 where $m\coloneqq [\frac{n}{2}]$.

We will need the following  elementary lemma. The proof is elementary.
\begin{lemma}\label{schur_lem}
    For any anti-symmetric linear operator $A:\R^n\to\R^n$, there exists an orthonormal basis such that w.r.t. the basis
    \eq{
    A = \operatorname{diag}\!\left[
    \begin{pmatrix} 0 & -s_1 \\ s_1 & 0 \end{pmatrix}, \dots,
    \begin{pmatrix} 0 & -s_m \\ s_m & 0 \end{pmatrix}, 0
    \right], \quad\hbox{if $n$ is odd}
    }
    and
    \eq{
    A = \operatorname{diag}\!\left[
    \begin{pmatrix} 0 & -s_1 \\ s_1 & 0 \end{pmatrix}, \dots,
    \begin{pmatrix} 0 & -s_m \\ s_m & 0 \end{pmatrix}
    \right], \quad\hbox{if $n$ is even},
    }
    where $s_j\geq0$ for $1\leq j\leq m$ and $m = [\frac{n}{2}]$.
\end{lemma}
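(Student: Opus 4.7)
My plan is to reduce the problem to the spectral theorem for the symmetric operator $A^2$. First I would observe two elementary facts: since $A^T = -A$, we have $(A^2)^T = A^T A^T = A^2$, so $A^2$ is symmetric, and $\langle A^2 x, x\rangle = -\langle Ax, Ax\rangle = -|Ax|^2 \le 0$, so $A^2$ is negative semi-definite. By the spectral theorem, $\mathbb{R}^n$ decomposes orthogonally into eigenspaces $V_s$ of $A^2$ with eigenvalues $-s^2$, $s \ge 0$. Each $V_s$ is $A$-invariant since $A$ commutes with $A^2$, so it suffices to put $A$ in the desired canonical form on each $V_s$ separately.

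On an eigenspace $V_s$ with $s > 0$, one has $A^2 = -s^2 \,\mathrm{id}_{V_s}$. For any unit vector $v \in V_s$, the pair $(v, s^{-1}Av)$ is orthonormal: antisymmetry gives $\langle v, Av\rangle = 0$, and $|Av|^2 = -\langle A^2 v, v\rangle = s^2$. On this $2$-plane the matrix of $A$ is exactly $\begin{pmatrix} 0 & -s \\ s & 0\end{pmatrix}$, since $Av = s\cdot(s^{-1}Av)$ and $A(s^{-1}Av) = s^{-1} A^2 v = -sv$. Iterating on the orthogonal complement of this plane inside $V_s$, which is again $A$-invariant and still satisfies $A^2 = -s^2 \,\mathrm{id}$, shows that $V_s$ splits into mutually orthogonal $2$-planes of this type; in particular $\dim V_s$ is even.

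On the kernel $V_0 = \ker A^2 = \ker A$, the operator $A$ vanishes and any orthonormal basis is admissible. Grouping kernel vectors in pairs yields $2\times 2$ zero blocks, i.e., blocks of the advertised form with $s_j = 0$. Since every $V_s$ with $s > 0$ is even-dimensional, $\dim V_0$ has the same parity as $n$, so when $n$ is odd exactly one kernel direction is left unpaired, producing the trailing scalar $0$ in the odd-dimensional statement; when $n$ is even, all of $V_0$ packs into $2\times 2$ zero blocks. I do not anticipate any real obstacle here: the whole argument rests on the spectral theorem applied to $A^2$, and the only mild care required is the bookkeeping in pairing up kernel directions and ordering the blocks along the diagonal.
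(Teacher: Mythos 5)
Your proof is correct and complete: the reduction to the spectral theorem for the symmetric, negative semi-definite operator $A^2$, the construction of the orthonormal pairs $(v,s^{-1}Av)$ on each eigenspace with $s>0$, the $A$-invariance of the successive orthogonal complements, and the parity bookkeeping for $\ker A = \ker A^2$ all check out, and the block count indeed comes to $m=[\tfrac n2]$ in both parities. The paper itself gives no argument for this lemma (it is simply declared elementary, being the standard real canonical form of a skew-symmetric operator), so your write-up supplies exactly the kind of standard proof the authors had in mind.
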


The following elementary Lemma is the key for our result. 
\begin{lemma}\label{sharp_length_constant} 
    On the Euclidean space $\R^n$,
    for any 2-form $\alpha$ and any spinor field $\varphi$, we have
    \eq{\label{lem5.7_ineq1}
    \abs{\alpha\cdot\varphi}^2 \leq m\abs{\alpha}^2\abs{\varphi}^2,
    }
    with equality if and only if 
    \eq{\label{lem5.7_eq1}
    \alpha = c\sum_{j=1}^m e_{2j-1}^\flat\w e_{2j}^\flat
   }
    for some constant $c\in\R$ and
    \eq{\label{lem5.7_eq2}
    e_{2j-1}\cdot e_{2j}\cdot\varphi = \phi, \quad\forall \,1\leq j\leq m
    }
    for some spinor field $\phi$.
    Here the basis is chosen as in the previous Lemma, if one views $\alpha$ as an anti-symmetric linear operator.
\end{lemma}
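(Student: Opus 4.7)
The plan is to reduce the inequality to a combination of the triangle inequality in the spinor fibre and the Cauchy--Schwarz inequality for real vectors, after first putting $\alpha$ into the normal form provided by Lemma \ref{schur_lem}.

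First, I apply Lemma \ref{schur_lem} to $\alpha$, viewed as an antisymmetric linear operator on $\R^n$, to obtain an orthonormal basis $\{e_j\}_{j=1}^n$ with respect to which
\[
    \alpha = \sum_{j=1}^{m} s_j\, e_{2j-1}^\flat \w e_{2j}^\flat, \qquad s_j \ge 0,
\]
so that $\abs{\alpha}^2 = \sum_{j=1}^m s_j^2$. Setting $\phi_j := e_{2j-1}\cdot e_{2j}\cdot\varphi$, the definition of the Clifford action of a $k$-form gives $\alpha\cdot\varphi = \sum_{j=1}^m s_j \phi_j$. Since the Clifford action of each $e_a$ is antisymmetric with respect to $\<\cdot,\cdot\>$ and $(e_{2j-1}\cdot e_{2j})^2 = -\rid$, a direct computation yields
\[
    \abs{\phi_j}^2 = -\<(e_{2j-1}\cdot e_{2j})^2\varphi,\varphi\> = \abs{\varphi}^2
\]
for every $j$; in other words, each Clifford operator $e_{2j-1}\cdot e_{2j}\cdot$ is an isometry of the spinor fibre.

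Combining the triangle inequality with the elementary Cauchy--Schwarz bound $\bigl(\sum_j s_j\bigr)^2 \le m \sum_j s_j^2$ then gives
\[
    \abs{\alpha\cdot\varphi} = \Bigl|\sum_{j=1}^m s_j \phi_j\Bigr| \le \sum_{j=1}^m s_j \abs{\phi_j} = \abs{\varphi}\sum_{j=1}^m s_j \le \sqrt{m}\,\abs{\alpha}\,\abs{\varphi},
\]
which is precisely \eqref{lem5.7_ineq1}.

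For the equality case, Cauchy--Schwarz becomes an equality if and only if all $s_j$ coincide, say $s_j = c \ge 0$, which is \eqref{lem5.7_eq1}. When $c>0$, the triangle inequality saturates if and only if all the vectors $c\phi_j$ are non-negative scalar multiples of a common spinor; since $\abs{\phi_j} = \abs{\varphi}$ is independent of $j$, this forces $\phi_1 = \cdots = \phi_m =: \phi$, which is \eqref{lem5.7_eq2}. The degenerate case $c=0$ (equivalently $\alpha\equiv 0$) is trivial. The whole argument is essentially linear-algebraic; the only subtlety is the Clifford-algebraic identity $\abs{\phi_j}=\abs{\varphi}$, so I do not foresee any serious obstacle.
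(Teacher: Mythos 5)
Your proposal is correct and follows essentially the same route as the paper's proof: reduce $\alpha$ to the normal form of Lemma \ref{schur_lem}, then chain the triangle inequality with the Cauchy--Schwarz bound $\bigl(\sum_j s_j\bigr)^2 \le m\sum_j s_j^2$, and read off the equality conditions from saturation of both inequalities. Your extra remark that each $e_{2j-1}\cdot e_{2j}\cdot$ is an isometry of the spinor fibre (so $\abs{\phi_j}=\abs{\varphi}$) is exactly the implicit step in the paper's computation, just made explicit.
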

\begin{proof}
    In view of Lemma \ref{schur_lem}, without loss of generality we may assume 
    \eq{
    \alpha = \sum_{j=1}^m c_j e_{2j-1}^\flat\w e_{2j}^\flat \quad\hbox{with}\quad c_j\in\R,
    } and hence $|\alpha|^2 =\sum_{j=1}^m c_j^2$.
    It follows
    \eq{
    \abs{\alpha\cdot\varphi}^2 &=  \abs{\sum_{j=1}^m c_j e_{2j-1}\cdot e_{2j} \cdot\varphi}^2 \leq \Big(\sum_{j=1}^m \abs{c_j e_{2j-1}\cdot e_{2j} \cdot\varphi}\Big)^2 \\& = \Big(\sum_{j=1}^m c_j\abs{\varphi}\Big)^2 \leq m\sum_{j=1}^m c_j^2\abs{\varphi}^2 = m\abs{\alpha}^2\abs{\varphi}^2,
    }
    where we have used the triangle inequality in the first inequality and the Cauchy-Schwarz inequality in the second.
    It is easy to see that equality holds if and only if $c_1=\cdots=c_m$ and
    \eq{
    e_{2j-1}\cdot e_{2j} \cdot\varphi = \phi, \quad\forall j,
    }
    for some $\phi$. 
    
\end{proof}

We denote by $\Psi(\alpha)$ the corresponding $(1,1)$-tensor of $\alpha$, i.e.
\eq{
\Psi(\alpha)(X)\coloneqq X\ip\alpha, \quad\forall X\in\Gamma(TM).
}
Now we prove
\begin{theorem}
Let $(M^n,g)$ $(n\geq3)$ be a closed manifold with a spin structure $\sigma$ and let
$(\varphi,\alpha)$ be a non-trivial solution to \eqref{2-form_eq} with $\varphi\in L^p$  $(p>\frac{n}{n-1})$.
Then
\eq{\label{2-form-ineq}
\lambda^2 \ge \frac {n}{4m(n-1)} Y(M,[g]), \quad m \coloneqq \Big[\frac{n}{2}\Big],
}
with equality 
if and only if $(M,g,\xi,\eta, \Phi)$ with  $\xi=\lambda n^{-1}(\rd^*\alpha)^\sharp$,  $\eta=\lambda n^{-1}\rd^*\alpha$ and $ \Phi=-2m\lambda n^{-1}\Psi(\alpha)$ is Sasaki-Einstein
and $\varphi$ is
\begin{enumerate}
    \item[(a)] a Killing spinor and vacuum, if $n\equiv3\!\!\mod4$;
    \item[(b)] $\varphi=\varphi_++\varphi_-$ with $\varphi_\pm$ a vacuum (w.r.t. $\pm\Phi$) and $\mp\frac{1}{2}$-Killing spinor if $n\equiv1\!\!\mod4$;
\end{enumerate}
up to a conformal transformation.
\end{theorem}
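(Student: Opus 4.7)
The plan is to mirror the development of Sections \ref{sec3}--\ref{zero_modes_and_Sasaki-Einstein_manifolds}, the only substantive modification being that the pointwise identity $|A\cdot\varphi|^2=|A|^2|\varphi|^2$ used for $k=1$ is replaced by Lemma \ref{sharp_length_constant}'s sharp inequality $|\alpha\cdot\varphi|^2\le m|\alpha|^2|\varphi|^2$. This single ingredient is what produces the factor $m=[n/2]$ in \eqref{2-form-ineq}.

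For the inequality itself, I would introduce the analogues
\[
Y([g,\alpha])\coloneqq\|\alpha\|_{L^n}^2\inf_{0\not\equiv u\in W^{1,2}}\frac{\int uL_gu\,\rd V_g}{\int|\alpha|_g^2 u^2\,\rd V_g},\quad\mu_a(g,\alpha),\quad\gamma_a([g,\alpha]),
\]
the conformal behaviour being dictated by Lemma \ref{k-form_conformal_invariance}. The same arguments as in Propositions \ref{prop7.7} and \ref{cor7.9} yield $\gamma_a([g,\alpha])=Y([g,\alpha])\ge Y(M,[g])$ for every $a\in(0,1]$. Substituting \eqref{2-form_eq} into the Schr\"odinger--Lichnerowicz formula \eqref{Sch-Lich} and invoking Lemma \ref{sharp_length_constant} produces
\[
0\ge\int\Bigl({\rm R}-\tfrac{4m(n-1)}{n}\lambda^2|\alpha|^2\Bigr)|\varphi|^2+4\int|P\varphi|^2,
\]
after which the contradiction argument of Theorem \ref{prop_gamma_a} delivers $\lambda^2\ge\tfrac{n}{4m(n-1)}Y([g,\alpha])\ge\tfrac{n}{4m(n-1)}Y(M,[g])$.

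For the ``only if'' direction of the equality case, equality forces, after a conformal change to $\tilde g$, that $\varphi_{\tilde g}$ is a twistor spinor of constant length, $(M,\tilde g)$ is Einstein, and $|\alpha_{\tilde g}\cdot\varphi_{\tilde g}|^2=m|\alpha_{\tilde g}|^2|\varphi_{\tilde g}|^2$ at every point. The equality clause of Lemma \ref{sharp_length_constant}, combined with the Lichnerowicz decomposition $\varphi_{\tilde g}=\varphi_++\varphi_-$ into $\mp\tfrac12$-Killing spinors and Lemma \ref{general_seperation}, yields at each point an adapted orthonormal frame in which $\alpha=c\sum_{j=1}^m e_{2j-1}^\flat\wedge e_{2j}^\flat$ with $c=-n/(2\lambda m)$, and
\[
e_{2j-1}\cdot e_{2j}\cdot\varphi_\pm=\pm i\varphi_\pm,\quad 1\le j\le m.
\]
In particular $\varphi_+$ is a vacuum with respect to $\Phi=-2m\lambda n^{-1}\Psi(\alpha)$ and $\varphi_-$ with respect to $-\Phi$. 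Differentiating these algebraic relations using the Killing equation, as in Theorem \ref{rest_of_proof}, verifies all Sasakian axioms for $(g,\xi,\eta,\Phi)$ with $\xi=\lambda n^{-1}(\rd^*\alpha)^\sharp$ and $\eta=\xi^\flat$, and in particular forces $n$ to be odd.

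The main obstacle, and the only genuinely new point compared to the $k=1$ case, is the dimensional dichotomy. The joint eigenvalue equations $e_{2j-1}e_{2j}\cdot\varphi_\pm=\pm i\varphi_\pm$, together with the action of $\xi$ on $\varphi_\pm$ dictated by the Sasakian Killing-spinor theory of Friedrich--Kim, determine the action of the volume element $\omega=i^{m+1}e_1\cdots e_n$ on each component. Since $\omega$ acts as a single scalar on the spin bundle $\Sigma M$ in odd dimensions, compatibility of the two computed values splits according to the parity of $m$: when $n\equiv 1\bmod 4$ the two scalars agree, so both $\varphi_\pm$ may genuinely coexist and one obtains case (b); when $n\equiv 3\bmod 4$ the parity obstruction forces $\varphi_-$ to land in the same vacuum class as $\varphi_+$, at which point the uniqueness of vacuum (Lemma \ref{lem_vacuum}) combined with $\<\varphi_+,\varphi_-\>=0$ collapses the decomposition to a single vacuum Killing spinor as in Proposition \ref{uniqueness}, yielding case (a). The ``if'' direction is verified by direct substitution paralleling the last portion of the proof of Theorem \ref{thm7.12}: one plugs a vacuum Killing spinor (resp.\ sum of two) into \eqref{2-form_eq}, computes $\lambda$ via $\D\varphi_\pm=\mp(n/2)\varphi_\pm$ and the explicit form of $\alpha$, and invokes Obata's theorem to match $\lambda^2$ with $\tfrac{n}{4m(n-1)}Y(M,[g])$.
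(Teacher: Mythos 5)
Your inequality part and the pointwise analysis of the equality case do follow the paper's route: the contradiction argument of Section \ref{sec3} with $\abs{A}^2$ replaced by $m\abs{\alpha}^2$ via Lemma \ref{sharp_length_constant}, and then Lemma \ref{general_seperation} together with the equality clause of Lemma \ref{sharp_length_constant}, giving the adapted-frame form of $\alpha$ with $c=-\tfrac{n}{2m\lambda}$ and the relations $e_{2j-1}\cdot e_{2j}\cdot\varphi_\pm=\pm i\varphi_\pm$. The genuine gap is the passage from this pointwise algebra to the globally defined structure with $\xi=\lambda n^{-1}(\rd^*\alpha)^\sharp$. To apply Theorem \ref{rest_of_proof} and to see that this particular vector field is the Reeb field one needs $\xi\cdot\varphi_\pm=-i\varphi_\pm$; in the paper this comes from $\sum_j e_j\cdot\nabla_{e_j}\alpha\cdot\varphi_+=-i\lambda^{-1}n\varphi_+$ (see \eqref{eq3_thm5.9}--\eqref{eq5_thm5.9}) \emph{only after} proving the claim $\rd\alpha=0$, which is itself the bulk of the argument: one introduces the $2$-form $\beta$ in \eqref{def_beta} built from $\varphi_+$, checks $\rd\beta=0$ from the Killing equation, and identifies $\alpha=\beta$ using the orthonormal basis of $\Sigma_pM$ generated by the vacuum (the argument of Lemma \ref{lem_vacuum}). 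Your sentence ``differentiating these algebraic relations \ldots verifies all Sasakian axioms for $(g,\xi,\eta,\Phi)$ with $\xi=\lambda n^{-1}(\rd^*\alpha)^\sharp$'' skips exactly this step; without $\rd\alpha=0$ the pointwise adapted frames are not tied to $\rd^*\alpha$ at all, and neither the Sasakian claim for that specific $\xi$ nor the relation $\xi\cdot\varphi_\pm=-i\varphi_\pm$ is available.

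The mod-$4$ dichotomy as you describe it is also not a valid mechanism. In every odd dimension both vacuum classes (w.r.t.\ $\Phi$ and w.r.t.\ $-\Phi$) are nonzero lines in $\Sigma_pM$, so no parity obstruction can ``force $\varphi_-$ to land in the same vacuum class as $\varphi_+$'' unless $\varphi_-=0$ already, which makes the subsequent appeal to Lemma \ref{lem_vacuum} and $\<\varphi_+,\varphi_-\>=0$ circular; nor can the question be settled by citing Sasakian Killing-spinor theory for ``the action of $\xi$ on $\varphi_\pm$'', since for instance round spheres of dimension $\equiv 3\bmod 4$ are Sasaki-Einstein and carry Killing spinors of both signs, so the needed sign information must come from \emph{this} $\alpha$ and the zero mode equation. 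What the paper actually does is: from $\rd\alpha=0$ it obtains $\xi\cdot\varphi_\pm=-i\varphi_\pm$, while for $m$ odd the volume element together with $e_{2j-1}\cdot e_{2j}\cdot\varphi_\pm=\pm i\varphi_\pm$ yields, via the globally defined vector $\bar\xi$ of \eqref{def_canonical_vector}, that $\bar\xi=\xi$ and $\xi\cdot\varphi_-=+i\varphi_-$; the clash of signs forces $\varphi_-=0$ directly, with no use of vacuum uniqueness at this stage. So the ingredient your dichotomy argument implicitly relies on is precisely the relation $\xi\cdot\varphi_\pm=-i\varphi_\pm$, i.e.\ the unproved claim $\rd\alpha=0$, and the vacuum-uniqueness route you propose does not substitute for it. The ``if'' direction in your last lines matches the paper's computation and is fine.
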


\begin{proof}
 Again we use the same argument in Section \ref{sec3} by 
 replacing $\abs{A}^2$ by $m|\alpha|^2 $ and insert \eqref{2-form_eq}
  into \eqref{eq_b1} to obtain
  \eq{
0= \int ({{\rm R}}|\varphi|^2  - \frac{4(n-1)}{n}\lambda^2\abs{\alpha \cdot \varphi}^2) +4 \int \abs{P\varphi}^2,
} like \eqref{reminder_twistor}. 
Now here there exists one small difference. We need to use estimate \eqref{lem5.7_ineq1} to get
 \eq{
0 \ge  \int ({{\rm R}}|\varphi|^2  - \frac{4(n-1)}{n}\lambda^2 m\abs{\alpha}^ 2  \abs{ \varphi}^2) +4 \int \abs{P\varphi}^2,
}
Now we can follow the contradiction argument as in Section \ref{sec3} to
  obtain inequality \eqref{2-form-ineq}, replacing $|A|^2$ by $m|\alpha|^2$. We leave the proof to the interested reader.

Assume equality holds.
An analogy of Proposition \ref{cor7.9} implies that up to a conformal transformation $\abs{\alpha}\equiv{\rm const}$.
The same argument as in the proof of Theorem \ref{thm7.12} implies $\varphi$ is a twistor spinor with constant length, hence $\varphi=\varphi_++\varphi_-$ with $\varphi_\pm$ a $\pm b$-Killing spinor. After scaling we may assume $b=-\frac{1}{2}$. Since at least one of $\varphi_\pm$ is not zero, without loss of generality we may assume $\abs{\varphi_+}=1$.

By Lemma \ref{sharp_length_constant} we have $e_{2j-1}\cdot e_{2j}\cdot \varphi = \phi$, then $\alpha\cdot\varphi = mc\phi$, hence
\eq{
    \frac{n}{2}(\varphi_+ - \varphi_-) = \D\varphi = i\lambda\alpha\cdot\varphi 
        = i\lambda mc\phi.
}
It follows
\eq{
    e_{2j-1}\cdot e_{2j}\cdot (\varphi_+ + \varphi_-) = \phi = -i\frac{n}{2\lambda mc}(\varphi_+ - \varphi_-).
}
Now argue as in Lemma \ref{lem7.11} we obtain
\eq{
    e_{2j-1}\cdot e_{2j}\cdot\varphi_+ = -i\frac{n}{2\lambda mc}\varphi_+, \qquad e_{2j-1}\cdot e_{2j}\cdot\varphi_- = i\frac{n}{2\lambda mc}\varphi_-
}
and hence
\eq{\label{eq0_thm5.9}
    \alpha\cdot\varphi_+ = -i\frac{n}{2\lambda}\varphi_+, \qquad \alpha\cdot\varphi_- = i\frac{n}{2\lambda}\varphi_-.
}
Recall that for a 2-form $\alpha$ we have (c.f. \cite{Baum_Book_90})
\eq{\label{eq2_thm5.9}
X\cdot\alpha\cdot = (X^\flat\w\alpha - X\ip\alpha)\cdot, \quad
\alpha\cdot X\cdot = (X^\flat\w\alpha + X\ip\alpha)\cdot, \quad\forall X\in\Gamma(TM).
}
It follows
\eq{\label{eq22_thm5.9}
(X^\flat\w\alpha)\cdot = \frac{1}{2}(\alpha\cdot X\cdot + X\cdot\alpha\cdot), \quad (X\ip\alpha)\cdot = \frac{1}{2}(\alpha\cdot X\cdot - X\cdot\alpha\cdot).
}
Using this fact, differentiating \eqref{eq0_thm5.9} yields
\eq{\label{eq1_thm5.9}
\nabla_X\alpha\cdot\varphi_+ = X\ip\alpha\cdot\varphi_+,
\quad \nabla_X\alpha\cdot\varphi_- = -X\ip\alpha\cdot\varphi_-,
\quad\forall X\in\Gamma(TM).
}
Define
\eq{
\xi\coloneqq \lambda n^{-1}\rd^*\alpha.
}
Now we prove that \eq{\label{vaccum2}\xi\cdot\varphi_\pm = -i\varphi_\pm.}
In fact, \eqref{eq22_thm5.9} implies
\eq{\label{eq3_thm5.9}
\rd^*\alpha\cdot\varphi_+ = -\sum_{j=1}^n(e_j\ip\nabla_{e_j}\alpha)\cdot\varphi_+ 
= \frac{1}{2}\sum_{j=1}^n(e_j\cdot\nabla_{e_j}\alpha\cdot\varphi_+ - \nabla_{e_j}\alpha\cdot e_j\cdot\varphi_+)
}
and
\eq{\label{eq4_thm5.9}
\rd\alpha\cdot\varphi_+ = \sum_{j=1}^n(e_j^\flat\w\nabla_{e_j}\alpha)\cdot\varphi_+ 
= \frac{1}{2}\sum_{j=1}^n(e_j\cdot\nabla_{e_j}\alpha\cdot\varphi_+ + \nabla_{e_j}\alpha\cdot e_j\cdot\varphi_+).
}
Using \eqref{eq0_thm5.9}, \eqref{eq1_thm5.9} and \eqref{eq2_thm5.9} we have
\eq{\label{eq5_thm5.9}
\sum_{j=1}^n e_j\cdot\nabla_{e_j}\alpha\cdot\varphi_+ = \sum_{j=1}^n e_j\cdot(e_j\ip\alpha)\cdot\varphi_+ = \sum_{j=1}^n (e_j^\flat\w e_j\ip\alpha)\cdot\varphi_+ = 2\alpha\cdot\varphi_+ = -i\lambda^{-1}n\varphi_+.
}
We claim $\rd \alpha=0$.
We start the proof of  the claim by defining a 2-form $\beta$ by
\eq{\label{def_beta}
\beta = c\sum_{j_1<j_2} \rIm\<e_{j_1}\cdot e_{j_2}\cdot\varphi_+,\varphi_+\>e_{j_1}^\flat\w e_{j_2}^\flat.
}
It is clear that $\beta$ is well-defined. Since $\varphi_+$ is a $-\frac{1}{2}$-Killing spinor, differentiating \eqref{def_beta} yields
\eq{
\nabla_{e_j}\beta  = \frac{c}{2}\Big(-\rIm\<e_{j_1}\cdot e_{j_2}\cdot e_j\cdot\varphi_+, \varphi_+\>-\rIm\<e_{j_1}\cdot e_{j_2}\cdot \varphi_+, e_j\cdot\varphi_+\>\Big)e_{j_1}^\flat\w e_{j_2}^\flat.
}
Using $e_j\cdot e_k\cdot + e_k\cdot e_j\cdot = -2\delta_{jk}$, it is elementary to check that
\eq{
\nabla_{e_j}\beta = 2c\,\rIm\<e_{j_1}\cdot\varphi_+,\varphi_+\>e_{j_1}^\flat\w e_j^\flat,
}
and hence $d\beta=0$. Now we show that $\alpha=\beta$.
\eqref{eq0_thm5.9} and Lemma \ref{sharp_length_constant} imply that at any $p\in M$, for an adapted basis $\{e_j\}$ we have
\eq{
e_{2j-1}\cdot e_{2j}\cdot (\varphi_+ + \varphi_-) = -ic_0(\varphi_+ - \varphi_-), \quad\forall \,1\leq j\leq m
}
for some constant $c_0$. Comparing the length we see that $c_0=\pm1$. By changing the sign of $c$ if necessary, we may assume $c_0=-1$. Arguing as in Lemma \ref{lem7.11} we have
\eq{\label{eq_two_vacuum_thm5.9}
    e_{2j-1}\cdot e_{2j}\cdot\varphi_+ = i\varphi_+, \qquad
    e_{2j-1}\cdot e_{2j}\cdot\varphi_- = -i\varphi_-, \quad\forall \,1\leq j\leq m.
} 
Combining with \eqref{lem5.7_eq1} we have
\eq{
c = -\frac{n}{2m\lambda}.
}
The same argument as in Lemma \ref{lem_vacuum} implies that \eqref{basis_rmk4.6} is an orthonormal basis of $\Sigma_pM$.
Hence
    \eq{
    \rIm\<e_{j_1}\cdot e_{j_2}\cdot\varphi_+,\varphi_+\> &= 1 \quad\hbox{if}\quad (j_1,j_2)=(2j-1,2j) \quad\hbox{for some}\quad 1\leq j \leq m,\\ 
    \rIm\<e_{j_1}\cdot e_{j_2}\cdot\varphi_+,\varphi_+\> &= 0 \quad\hbox{otherwise}.
    }
Therefore \eq{\beta=  c \sum_{j=1}^m \rIm\<e_{2j-1} \cdot e_{2j }\cdot\varphi_+,\varphi_+\>e_{2j-1}^\flat\w e_{2j}^\flat = c \sum_{j=1}^m e_{2j-1}^\flat\w e_{2j}^\flat=\alpha.}  
Therefore $\rd\alpha=0$, the claim.
Now combining with \eqref{eq3_thm5.9}, \eqref{eq4_thm5.9} and \eqref{eq5_thm5.9} we have
\eq{
\rd^*\alpha\cdot\varphi_+ = -i\lambda^{-1}n\varphi_+.
}
Hence by the  definition of $\xi$ we have $\xi\cdot\varphi_+ = -i\varphi_+$. Similarly we have $\xi\cdot\varphi_- = -i\varphi_-$.
Moreover, by \eqref{lem5.7_eq1} and \eqref{eq_two_vacuum_thm5.9} we have
\eq{
(c^{-1}\Psi(\alpha)(e_j)+ie_j)\cdot\varphi_+ = 0, \quad\forall 1\leq j\leq 2m.
}
Now Theorem \ref{rest_of_proof} implies that $(M,g,\xi,\eta, \Phi)$ with  $\xi=\lambda n^{-1}(\rd^*\alpha)^\sharp$,  $\eta=\lambda n^{-1}\rd^*\alpha,$ and $ \Phi=-2m\lambda n^{-1}\Psi(\alpha)$ is Sasaki-Einstein. 

It remains to show that either $\varphi_+=0$ or $\varphi_-=0$ when $n\equiv3\!\!\mod4$. In this case, $m$ is odd.
We denote the volume form by
\eq{
\omega \coloneqq e_1^\flat\w\cdots\w e_{2m}^\flat\w e_{2m+1}^\flat.
}
Since $\omega^2=\rid$, 
\eq{
\omega\cdot = \epsilon\,\rid,
}
where $\epsilon\in\{1,-1\}$ is determined by the choice of the  Clifford representation.
Therefore
\eq{
e_{2m+1}\cdot\varphi_+ = \epsilon (-i)^m\varphi_+, \qquad e_{2m+1}\cdot\varphi_- = \epsilon\, i^m\varphi_-.
}
We define a vector field $\bar{\xi}$ by
\eq{\label{def_canonical_vector}
\bar{\xi} \coloneqq \epsilon (-1)^{\frac{m+1}{2}} \sum_{j=1}^n \rIm\<e_j\cdot\varphi_+,\varphi_+\>e_j,
}
which is independent of the choice of basis, hence well defined.
In view of \eqref{eq_two_vacuum_thm5.9} we see that $\bar{\xi}=e_{2m+1}$ at any point $p\in M$ and hence
\eq{
\bar{\xi}\cdot\varphi_+ = -i\varphi_+, \quad \bar{\xi}\cdot\varphi_- = i\varphi_-.
}
Recall that
\eq{
\xi\cdot\varphi_+ = -i\varphi_+ = \bar{\xi}\cdot\varphi_+, \quad \xi\cdot\varphi_- = -i\varphi_- = -\bar{\xi}\cdot\varphi_-.
}
Since $|\varphi_+|=1$, we have $\xi=\bar\xi$, and hence $\xi\cdot\varphi_-=0$ and  $\varphi_-=0$.

The other direction is easy to show.
Again we may assume the Killing number is $b=-\frac{1}{2}$.
In both cases we have 
\eq{
2\alpha = \sum_{j=1}^n e_j\w e_j\ip\alpha = \sum_{j=1}^n e_j\w\Psi(\alpha)(e_j) =  -\frac{n}{2m\lambda}\sum_{j=1}^n e_j\w\Phi(e_j).
}
Therefore $\alpha$ has the same form as \eqref{lem5.7_eq1} and
\eq{
\abs{\alpha}^2 = m\Big(-\frac{n}{2m\lambda}\Big)^2 = \frac{n^2}{4m\lambda^2}.
}
Since the normalization $\norm{\alpha}_n=1$, we can show as the last paragraph  of the proof for  Theorem \ref{thm7.12}
\eq{
\lambda^2 = \frac{n^2}{4m}\rVol^{\frac{2}{n}} = \frac{n}{4m(n-1)}Y(M,[g]),
}
in view of the fact that $g$ is a Yamabe minimizer.
\end{proof}

\begin{remark}

In the proof, \eqref{vaccum2} implies that $\varphi_+$ and $\varphi_-$ are vacuums, but one w.r.t. $\Phi$, another w.r.t. $-\Phi$. Hence one can not use the uniqueness of vacuum to show that one of them must be zero. In fact,  as stated in the Theorem, in the case $n\equiv 1$ mod $4$, both can be non-zero, and $\varphi$ itself is not necessarily a Killing spinor.

\end{remark}

 \begin{remark}\label{rmk5.11} We give three remarks for the equality case.

    (1) The Reeb field $\xi$ can also be defined by    
    \eq{
    \xi \coloneqq c[*(\alpha\w\alpha\w\cdots\w\alpha)]^\sharp \quad\hbox{(m times)}.
    }

    (2) Using the same argument as in Theorem \ref{thm5.4} and \cite{Friedrich_Kim_2000}*{Theorem 6.3 and Corollary 6.1}, we can in fact prove that: if $(M,g,\xi,\eta,\Phi)$ is Sasaki-Einstein and $\varphi$ is
    \begin{enumerate}
    \item[(a)] a vacuum if $n\equiv3\!\!\mod4$;
    \item[(b)] the sum of a vacuum w.r.t. $\Phi$ and a vacuum w.r.t. $-\Phi$ if $n\equiv1\!\!\mod4$;
\end{enumerate}
then $(\varphi,\alpha)$ is a zero mode achieving equality, where $\alpha$ is determined by $\Phi$ as above.

(3) According to \cite{Friedrich_Kim_2000}*{Theorem 6.3 and Corollary 6.1} or C. B\"ar \cite{Bar1993}, when $n\equiv3\!\!\mod4$, we have in fact $\varphi_-=0$ and hence $\varphi=\varphi_+$ is a $-\frac{1}{2}$-Killing spinor up to a conformal transformation.
\end{remark}

 \subsection{Case \texorpdfstring{$k=0$}{k=0} plus \texorpdfstring{$k=1$}{k=1}}
 Just before posting the paper into ArXiv, we have learned from Jonah Reu\ss{} a paper
by V. Branding, N. Ginoux and G. Habib \cite{BGH25}.
They considered (in our notation) the following equation
\eq{\label{BGH}\D\varphi=iA\cdot \varphi+\lambda \varphi, \qquad \varphi \not= 0}
for a one form $A$ and a constant $\lambda$ and obtained
\eq{
(|\lambda| \rVol(M,g)^{\frac{1}{n}} + \|A\|_{L^n})^2 \ge \frac  n{4(n-1)}Y(M,[g]),
}
by using a similar method presented in \cite{Frank_Loss_2024} and \cite{Reuss25}, which uses the $\epsilon$-regularization $\sqrt{|\varphi|^2 +\epsilon^2}$. If equality holds, they showed that  up to a conformal transformation $(M,g)$ is Sasaki-Einstein (in the case $A\not =0$) and $\varphi$ is a twistor with constant length and hence $\varphi=\varphi_++\varphi_-$ with $\pm b$-Killing spinor $\varphi_\pm$. Their case  is also included in  our generalization.

 We  can do slightly generally by considering 
 \eq{\label{eq_k=0_plus_k=1}
 \D\varphi = iA\cdot\varphi + f\varphi,
 }
 where $A$ is  a one-form (or a vector field) and $f$ is a function, i.e., combining case $k=0$ and  case $k=1$.  We denote 
 \eq{
 \alpha \coloneqq iA + f.
 } 
 We may assume $A$
 and $f$  both are not zero, otherwise it was considered above.

 Note that by Lemma \ref{conformal_invariance} and Lemma \ref{k-form_conformal_invariance} we have
 \eq{
 \abs{\alpha_{\tilde{g}}}_{\tilde{g}}^2 = \abs{A_{\tilde{g}}}_{\tilde{g}}^2 + f_{\tilde{g}}^2 = h^{-\frac{4}{n-2}}\abs{A}_g^2 + h^{-\frac{4}{n-2}}f^2 = h^{-\frac{4}{n-2}}\abs{\alpha}^2.
 }
It is clear that
 \eq{\label{eq1_k=0_plus_k=1}
 \abs{\alpha\cdot\varphi}^2&=\abs{iA\cdot\varphi + f\varphi}^2 = (\abs{A}^2+f^2)\abs{\varphi}^2+2\rRe\<iA\cdot\varphi,f\varphi\>\\
 &\leq (\abs{A}^2+f^2)\abs{\varphi}^2+ 2\abs{A}\abs{\varphi}\cdot\abs{f}\abs{\varphi} \leq (\abs{A}+\abs{f})^2\abs{\varphi}^2.
 } 
 Therefore, using the same argument as in Section \ref{sec3}  and replacing $|A|^2$ by $(|A|+|f|)^2$
 we immediately have 
 \eq{\label{ineq_k=0_plus_k=1}
 \norm{\abs{A}+\abs{f}}_{L^n}^2 \geq \frac{n}{4(n-1)}Y(M,[g]),
 }
 which implies the case considered in \cite{BGH25}.

 Now assume that equality holds, then again up to a conformal transformation $\varphi$ is a twistor spinor with constant length and $\abs{A}+\abs{f}\equiv{\rm const}\not =0$.

 Assume now $A$ is nowhere vanishing. Then 
  equality in \eqref{eq1_k=0_plus_k=1} implies
 \eq{ f\varphi= h
 \,iA\cdot\varphi
 }
 for some non-negative function $h$. It follows
 $\abs{f} = h\abs{A}$
 and hence
 $(1+h)\abs{A}=|A|+|f|\equiv{\rm const}\not =0$. 
 Together with \eqref{eq_k=0_plus_k=1} we have
 \eq{
 \D\varphi = i(1+h)A\cdot\varphi
 }
 and
 \eq{
 \norm{(1+h)A}_{L^n}^2 = \frac{n}{4(n-1)}Y(M,[g]).
 }
 It  goes back to Theorem \ref{main_thm} with $\tilde A:=(1+h)A$.
Hence equality in \eqref{ineq_k=0_plus_k=1} holds if and only if $\varphi$ is a non-parallel real Killing spinor up to a conformal transformation, and if and only if 
$(M,g)$ is a Sasaki-Einstein manifold with $(1+\abs{f}\abs{A}^{-1})A$ as its Reeb field and $\varphi$  a vacuum up to a conformal transformation.

Assume $f$ is nowhere vanishing. Then 
  equality in \eqref{eq1_k=0_plus_k=1} implies
 \eq{
 iA\cdot\varphi = h f\varphi
 }
 for some non-negative function $h$. It follows similarly that 
 $\abs{A}=h \abs{f} $
 and hence
 $(1+h)\abs{f}=|A|+|f|\equiv{\rm const}\not =0$. 
 Together with \eqref{eq_k=0_plus_k=1} we have
 \eq{
 \D\varphi = (1+h) f \varphi
 }
 and
 \eq{
 \norm{(1+h)f}_{L^n}^2 = \frac{n}{4(n-1)}Y(M,[g]).
 }
 It  goes back to case $k=0$ with $\tilde f:=(1+h)f$. If we further assume $f$ is a constant, then $h$ is also a non-zero constant, and hence $A$ is nowhere vanishing. It goes back to the case $k=1$ again.

\subsection{Further generalization}

Our approach can be generalized to deal with a general endomorphism. We consider
\eq{\label{endomorphism_eq}
    \D\varphi = \lambda H\varphi,
}
where $H\in \Gamma({\rm End}^{{\rm sym}}(\Sigma M))$ is symmetric and normalized as $\int_M \norm{H}^n = 1$. Here we denote by $\norm{H}$ the operator norm of $H$ and by $\norm{H}_n$ the $L^n$-norm of $\norm{H}$.  

In this general case, we have the similar conformal invariance. In fact, for the isometry $F: (\Sigma M,g) \to (\Sigma M, \tilde{g})$ with $\varphi \mapsto \tilde{\varphi}$ given in Section \ref{sec2.2}, we define the induced $\tilde{H}$ by
\eq{
    \tilde{H} \coloneqq F \circ H \circ F^{-1} : \Gamma(\Sigma M,\tilde{g})\to \Gamma(\Sigma M,\tilde{g}).
}
It is easy to check that Lemma \ref{conformal_invariance} holds true for
\eq{\label{conformal_H}
     \varphi_{\tilde{g}} \coloneqq h^{-\frac{n-1}{n-2}}\tilde{\varphi}, \qquad H_{\tilde{g}} \coloneqq h^{-\frac{2}{n-2}}\tilde{H}
}
and for $n=2$ with $\tilde{g}=e^{2h}$.
Note that by the definition of operator norm we have
\eq{\label{def_operator_norm}
    \norm{H} = \sup_{\varphi\not\equiv0} \frac{\abs{H\varphi}}{\abs{\varphi}}.
}
Hence for any spinor field $\varphi$ we have
\eq{
    \abs{H\varphi} \leq \norm{H}\cdot\abs{\varphi}.
}
In view of Remark \ref{rmk}, using the same argument as in Section \ref{sec3} we can prove the following theorem.

\begin{theorem}\label{generalization_thm}
Let $(M^n,g)$ $(n\geq2)$ be a closed spin manifold. If $(M,g)$ admits a non-trivial solution $(\varphi,H)$ to \eqref{endomorphism_eq}, where $\varphi\in L^p$ with $p>\frac{n}{n-1}$ and $\norm{H}\in L^n$ with $\norm{H}_n = 1$, then
\eq{\label{ineq_thm5.10}
    \lambda^2 \geq \frac {n}{4(n-1)} Y(M,[g]).
}
Equality holds if and only if (up to a conformal transformation) $\varphi = \varphi_+ + \varphi_-$, where $\varphi_\pm$ is a $\mp \frac{1}{2}$-Killing spinor with $\rRe\<\varphi_+,\varphi_-\>=0$, and $\varphi$ achieves the supreme in \eqref{def_operator_norm}.

\end{theorem}

\begin{proof}
We use the same argument in Section \ref{sec3} by replacing $f^2$ by $\norm{H}^2$ and using the integral Schr\"odinger-Lichnerowicz formula \eqref{Sch-Lich} to obtain
\eq{
    0 \geq \int ({\rm R} - \frac{4(n-1)}{n}\lambda^2\norm{H}^2)\abs{\varphi}^2 + 4\int \abs{P\varphi}^2.
}
Then we follow the contradiction argument given in Section \ref{sec3} to obtain inequality \eqref{ineq_thm5.10}, as  in Remark \ref{rmk}. We skip it, and only prove the equality case.

Assume equality holds, then $\varphi$ achieves the supreme in \eqref{def_operator_norm}, i.e. $\abs{H\varphi}=\norm{H}\cdot\abs{\varphi}$.
As in the proof of Theorem \ref{main_thm}, we have
\eq{
    {\rm R}_{\tilde g} = Y(M,[g])\norm{H_{\tilde{g}}}_{\tilde g}^2 = \frac{4(n-1)}{n}\lambda^2 \norm{H_{\tilde{g}}}_{\tilde g}^2, \qquad P_{\tilde g}\varphi_{\tilde g}=0
}
for some $\tilde g\in[g]$. Therefore $\varphi_{\tilde g}$ is a twistor spinor and hence by \eqref{why_varphi_constant_lenth} one can see $\abs{\varphi_{\tilde g}}_{\tilde g}\equiv{\rm const}$ and $(M,\tilde g)$ is Einstein. By a well known result (see for instance \cite{Ginoux09}*{Proposition A.2.1}) it implies that $(M,\tilde{g})$ is Einstein and $\varphi_{\tilde{g}}=\varphi_++\varphi_-$, where $\varphi_\pm$ is a $\pm b$-Killing spinor for some $b\in\mathbb{R}\backslash\{0\}$.

For simplicity we omit the subscript $\tilde{g}$ and without loss of generalization, we assume $b=-\frac{1}{2}$. Thus ${\rm R}\equiv n(n-1)$, and again by Obata's Theorem we have
\eq{
    Y(M, [g]) = {\rm R}\cdot\rVol^{\frac{2}{n}} = n(n-1)\rVol^{\frac{2}{n}}.
}
Therefore the equality in \eqref{ineq_thm5.10} implies
\eq{\label{eq_thm5.2_value_of_lambda}
    \lambda^2 = \frac{n}{4(n-1)}Y(M, [g]) = \frac{n^2}{4}\rVol^{\frac{2}{n}}.
}
Since $\abs{\varphi}\equiv{\rm const}$, we have $\rRe\<\varphi_+,\varphi_-\>\equiv{\rm const}$. Now the equation \eqref{endomorphism_eq} is
\eq{\label{decomposition_endomorphism_eq}
    \frac{n}{2}(\varphi_+ - \varphi_-) = \D\varphi = \lambda H\varphi = \lambda H(\varphi_+ + \varphi_-).
}
Taking length yields
\eq{
    \frac{n^2}{4}(\abs{\varphi_+}^2 + \abs{\varphi_-}^2 - 2\rRe\<\varphi_+,\varphi_-\>) = \lambda^2 \norm{H}^2 (\abs{\varphi_+}^2 + \abs{\varphi_-}^2 + 2\rRe\<\varphi_+,\varphi_-\>).
}
Recall that $\norm{H}_n=1$, taking the square of $L^n$-norm of \eqref{decomposition_endomorphism_eq} yields
\eq{
    \frac{n^2}{4}\rVol^{\frac{2}{n}}(\abs{\varphi_+}^2 + \abs{\varphi_-}^2 - 2\rRe\<\varphi_+,\varphi_-\>) = \lambda^2 (\abs{\varphi_+}^2 + \abs{\varphi_-}^2 + 2\rRe\<\varphi_+,\varphi_-\>).
}
Together with \eqref{eq_thm5.2_value_of_lambda} we have
\eq{
    \rRe\<\varphi_+,\varphi_-\> = 0.
}

For the reverse, due to the conformal invariance of \eqref{endomorphism_eq}, we may assume $\varphi = \varphi_+ + \varphi_-$, where $\varphi_\pm$ is a $\mp \frac{1}{2}$-Killing spinor with $\rRe\<\varphi_+,\varphi_-\>=0$, and $\varphi$ achieves the supreme in \eqref{def_operator_norm}. It follows
\eq{
    \lambda H\varphi = \D\varphi = \D(\varphi_+ + \varphi_-) = \frac{n}{2}(\varphi_+ - \varphi_-).
}
Since $\rRe\<\varphi_+,\varphi_-\>=0$, we have
\eq{
    \abs{\varphi} = \abs{\varphi_+ + \varphi_-} = \abs{\varphi_+ - \varphi_-}
}
and hence
\eq{
    \lambda \norm{H}\cdot\abs{\varphi} = \lambda \abs{H\varphi} = \frac{n}{2}\abs{\varphi_+ - \varphi_-} = \frac{n}{2}\abs{\varphi}.
}
Therefore $\lambda\norm{H}=\frac{n}{2}$ and hence
\eq{
    \lambda^2 = \norm{\lambda\norm{H}}_n^2 = \norm{\frac{n}{2}}_n^2 = \frac{n^2}{4}\rVol^{\frac{2}{n}}.
}
Again the existence of a $\mp\frac{1}{2}$-Killing spinor implies ${\rm R}\equiv n(n-1)$, and
\eq{
    Y(M, [g]) = {\rm R}\cdot\rVol^{\frac{2}{n}} = n(n-1)\rVol^{\frac{2}{n}}.
}
Therefore
\eq{
    \lambda^2 = \frac{n^2}{4}\rVol^{\frac{2}{n}} = \frac{n}{4(n-1)}Y(M, [g]).
}

\end{proof}

\textbf{Example 1:} If $H = f$, where $f$ is a function, then
\eq{
    \norm{H} = \abs{f}.
}
It goes back to Case $k=0$.

\textbf{Example 2:} If $H = iA\cdot$, where $A$ is a vector field and $\cdot$ is the Clifford multiplication, then
\eq{
    \norm{H} = \abs{A}.
}
It goes back to Theorem \ref{main_thm} and Theorem \ref{main_thm_2_dim}.

\textbf{Example 3:} If $H = f + iA\cdot$, where $f$ is a function, $A$ is a vector field and $\cdot$ is the Clifford multiplication, then
\eq{
    \norm{H} \leq \abs{f} + \abs{A}.
}
It goes back to Case $k=0$ plus $k=1$.

\textbf{Example 4:} If $H = i\alpha\cdot$, where $\alpha$ is a differential $2$-form, then
\eq{
    \norm{H} = \big[\frac{n}{2}\big]^{\frac{1}{2}}\cdot\abs{\alpha}.
}
It goes back to Case $k=2$.

\section{A new conformal invariant}

 For any $(M,g)$  we define a new invariant
 \eq{\label{def_Y_v}
 Y_v(M,[g]) \coloneqq \inf \big\{\lambda^2 \, \big|\, \D_g \varphi =i\lambda A\cdot \varphi \ \hbox{for some} \ \varphi \not \equiv 0 \ \hbox{and vector field}\ A \ \hbox{with}\ \norm{A}_n=1 \big\}.
 }
 
 The discussion in Section \ref{Section_the_zero_mode_equation} shows that it is a conformal invariant.
 \begin{lemma}
     $Y_v$ is conformally invariant.
 \end{lemma}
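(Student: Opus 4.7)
The plan is to invoke the conformal invariance of the zero mode equation itself, Lemma \ref{conformal_invariance}, together with the conformal behaviour of the $L^n$-norm of $A$ already recorded there. Since $Y_v(M,[g])$ is defined as an infimum of $\lambda^2$ over all triples $(\lambda,\varphi,A)$ satisfying $\D_g\varphi = i\lambda A\cdot\varphi$ with $\varphi\not\equiv 0$ and $\norm{A}_{L^n(g)}=1$, it suffices to check that this admissible set of $\lambda^2$ is preserved under a conformal change $\tilde g = h^{\frac{4}{n-2}}g$.

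First I would take any admissible triple $(\lambda,\varphi,A)$ for $g$ and apply Lemma \ref{conformal_invariance} to produce the pair $(\varphi_{\tilde g}, A_{\tilde g})$, which by that lemma solves $\D_{\tilde g}\varphi_{\tilde g} = i\lambda\, A_{\tilde g}\,\tilde{\cdot}\,\varphi_{\tilde g}$ with the same $\lambda$. One checks that $\varphi_{\tilde g}\not\equiv 0$, since the conformal rescaling of spinors is a fibrewise isomorphism on $\Sigma M$. Moreover, the last displayed computation in the proof of Lemma \ref{conformal_invariance} gives directly $\norm{A_{\tilde g}}_{L^n(\tilde g)} = \norm{A}_{L^n(g)} = 1$. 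Hence $\lambda^2$ is admissible for $(M,\tilde g)$ as well, so $Y_v(M,[\tilde g])\le Y_v(M,[g])$.

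The reverse inequality follows by symmetry: applying the same argument to the conformal change $g = h^{-\frac{4}{n-2}}\tilde g$ gives $Y_v(M,[g])\le Y_v(M,[\tilde g])$. Combining these yields $Y_v(M,[g]) = Y_v(M,[\tilde g])$, which proves conformal invariance. There is no genuine obstacle in this proof; it is a formal consequence of Lemma \ref{conformal_invariance}, and the only point worth verifying in detail is the conformal transformation of the normalization condition $\norm{A}_{L^n}=1$, which is already carried out in that lemma.
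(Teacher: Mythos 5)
Your proof is correct and is essentially the argument the paper intends: the paper proves the lemma simply by appealing to the conformal invariance of the zero mode equation (Lemma \ref{conformal_invariance}), which shows that each admissible triple $(\lambda,\varphi,A)$ for $g$ transforms to an admissible triple for $\tilde g$ with the same $\lambda$ and preserved normalization $\norm{A}_{L^n}=1$, and the two-sided comparison of infima then follows exactly as you describe.
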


 \noindent{\it Question}: {\it 
 Is it true that $Y_v(M,[g])\le Y_v(\mathbb{S}^n)$? }

 \

We conjecture that it is true at least when $n$ is odd.

The result of Frank-loss \cite{Frank_Loss_2024} implies that on the odd-dimensional standard sphere we have
\[
Y_v (\SS^n) = \frac{n}{4(n-1)} Y(\SS^n).
\]
Theorem \ref{main_thm} implies clearly
\eq{
    Y_v(M,[g]) \geq \frac{n}{4(n-1)} Y(M,[g])
}
on any $(M,g)$. 
If $(M,g)$ is a simply connected Sasakian-Einstein, then using Friedrich-Kim \cite{Friedrich_Kim_2000} and Theorem \ref{main_thm} we have
\eq{
Y_v(M,[g]) = \frac{n}{4(n-1)} Y(M,[g]).
}

It natural   to ask when  
equality holds and if  $Y_v $ is achieved.

\appendix

\section{The regularity} \label{sec_a}

\begin{lemma}\label{L^r}
    If $\varphi\in L^p$ is a solution to \eqref{eq1.1} with $p>\frac{n}{n-1}$, then $\varphi\in L^r$ for any $\frac{n}{n-1}<r<\infty$.
\end{lemma}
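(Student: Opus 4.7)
The equation $\D\varphi = i\lambda A\cdot\varphi$ is conformally critical, so the naive bootstrap $\varphi\in L^q \Rightarrow A\cdot\varphi\in L^{nq/(n+q)} \Rightarrow \varphi\in L^q$ yields no gain in integrability. My plan is to follow the truncation strategy of \cite{FL1, Frank_Loss_2024}, exploiting absolute continuity of the $L^n$-norm to extract smallness from the high-intensity part of $A$. Fix a target $r>n/(n-1)$ and decompose $A = A_1 + A_2$ with $A_1 := A\chi_{\{|A|\leq N\}}\in L^{\infty}$ and $A_2 := A\chi_{\{|A|>N\}}$, where $N$ is chosen large enough that $\|A_2\|_{L^n}$ is as small as we need.

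Let $G$ denote the inverse of $\D$ on the $L^2$-orthogonal complement of $\ker\D$ (the finite-dimensional space of harmonic spinors is smooth, so its contribution to $\varphi$ is already in every $L^q$ and can be absorbed throughout). By elliptic regularity $G:L^s \to W^{1,s}$ is bounded and composing with the Sobolev embedding $W^{1,s}\hookrightarrow L^{s^*}$ for $\tfrac{1}{s^*}=\tfrac{1}{s}-\tfrac{1}{n}$, Hölder's inequality yields for $q\in(n/(n-1),\infty)$ the bound
\eq{
\|G(A_2\cdot\psi)\|_{L^q} \leq C_q\|A_2\|_{L^n}\|\psi\|_{L^q},
}
with $C_q$ depending continuously on $q$ on compact subintervals of $(n/(n-1),\infty)$. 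Hence, for $N$ sufficiently large, the operator $T\psi := G(i\lambda A_2\cdot\psi)$ is a $\tfrac{1}{2}$-contraction on $L^q$ uniformly in $q\in[p,r]$, so $I-T$ is invertible on each such $L^q$.

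Rewriting the zero mode equation (after projecting off the harmonic part of $\varphi$) as $(I-T)\varphi = u_1$ with $u_1 := G(i\lambda A_1\cdot\varphi)$, the boundedness of $A_1$ gives $A_1\cdot\varphi\in L^p$, and Sobolev embedding (or Morrey when $p\geq n$) upgrades this to $u_1\in L^{p^{*}}$ with $\tfrac{1}{p^{*}}=\tfrac{1}{p}-\tfrac{1}{n}$. The equation $(I-T)\tilde\varphi = u_1$ admits a unique solution $\tilde\varphi\in L^{p^{*}}$; since the closed manifold has finite volume we have $L^{p^{*}}\subset L^{p}$, and the uniqueness of solutions in $L^p$ forces $\tilde\varphi=\varphi$, upgrading $\varphi$ to $L^{p^{*}}$. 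Iterating with $p$ replaced by $p^{*}$ the integrability exponent jumps by $-\tfrac{1}{n}$ each step, so after finitely many iterations $\varphi\in L^r$; once $p^{(k)}\geq n$, Morrey gives $L^\infty$ and the conclusion for all $r<\infty$ follows.

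The main technical obstacle is ensuring that the truncation parameter $N$ can be chosen once and for all so that $T$ is a uniform contraction on every $L^q$ encountered along the iteration — this is what requires continuity of the Sobolev constants $C_q$ on the compact interval $[p,r]$. The secondary issue, which is genuinely a feature of the closed-manifold setting rather than $\mathbb{R}^n$, is to handle $\ker\D$ carefully when defining $G$; this is routine since $A\cdot\varphi = -i\lambda^{-1}\D\varphi \in \mathrm{Im}\,\D$ by the equation itself, so every application of $G$ is well-defined.
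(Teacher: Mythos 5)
Your argument is correct, and it closes the bootstrap differently from the paper. Both proofs rest on the same two ingredients: the splitting of $\lambda A$ into a bounded part plus a part with small $L^n$-norm, and the elliptic estimate \eqref{apriori_estimate} for $\D$ combined with H\"older and Sobolev, which is exactly why the small part gives an operator of norm $O(\|A_2\|_{L^n})$ on $L^q$. Where you diverge is in how the a priori lack of $\varphi\in L^r$ is handled: the paper (following \cite{FL1}) introduces the truncated dual functional $S_M=\sup\{|\int\<\phi,\varphi\>|:\|\phi\|_{r^*}\le 1,\ \|\phi\|_{p^*}\le M\}$, which is finite by construction, proves $S_M\le C\|\varphi\|_p+C\epsilon S_M$ and absorbs, then lets $M\to\infty$ by density; you instead invert $I-T$ by a Neumann series simultaneously on $L^{p^*}$ and on $L^p$, solve $(I-T)\tilde\varphi=u_1$ in the better space, and identify $\tilde\varphi=\varphi$ through uniqueness in $L^p$ and the inclusion $L^{p^*}\subset L^p$ on a closed manifold. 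Your route is the more standard fixed-point bookkeeping and makes the gain of one Sobolev step per iteration transparent; the paper's duality trick achieves the same absorption without ever discussing invertibility of $I-T$, at the cost of the slightly less intuitive $S_M$ device. Your worry about uniformity of the constants is easily discharged: only the finitely many exponents $\tfrac1{p_k}=\tfrac1p-\tfrac kn$ (capped at $r$) ever occur, so one may simply take the maximum of finitely many constants when choosing $N$.

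Two small points deserve care. First, in the paper's setting the lemma is applied after the conformal normalization ${\rm R}_g>0$, so $\ker\D=\{0\}$ and $G=\D^{-1}$ outright; in your more general formulation, the justification ``$A\cdot\varphi=-i\lambda^{-1}\D\varphi\in\mathrm{Im}\,\D$'' covers only the full field, not $A_2\cdot\psi$ for the arbitrary spinors (Neumann iterates, $u_1$, etc.) to which $T$ is applied, so you should define $G=\D^{-1}P^\perp$ with $P^\perp$ the projection off the finite-dimensional smooth kernel; the equation then reads $(I-T)\varphi=u_1+\varphi_h$ with $\varphi_h$ smooth, and the argument is unchanged. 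Second, the closing phrase ``Morrey gives $L^\infty$'' applies to $u_1$, not to $\varphi$ itself (since $T$ does not improve beyond $W^{1,n}$); this is harmless because the target $r<\infty$ was fixed at the outset and the iteration already lands $\varphi$ in $L^r$.
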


\begin{proof} The proof follows completely the one in \cite{FL1} with one exception that we use the ellipticity theory instead of the Hardy-Littlewood-Sobolev (HLS) inequality. For convenience of the reader we repeat it here.

First we fix $r$ with $\frac{n}{n-1}<r\leq\frac{np}{n-p}$ if $\frac{n}{n-1}<p<n$ and $\frac{n}{n-1}<r<\infty$ if $p\geq n$. It suffices to prove $\varphi\in L^r$ for our choice of $r$, since in the case $\frac{n}{n-1}<p<n$ we have $\varphi\in L^{\frac{np}{n-p}}$ where $\frac{np}{n-p}>n$, hence we can repeat the argument to complete the proof.

We need the following elliptic apriori estimate for the Dirac operator (see for instance \cite{A03} or \cite{CJSZ2018}): let $\psi$ be a solution to $\D\psi=\chi$ and $\chi\in L^q$ with $1<q<\infty$, then
\eq{\label{apriori_estimate}
C^{-1}\norm{\psi}_{\frac{nq}{n-q}} \leq \norm{\psi}_{W^{1,q}}\leq C \norm{\chi}_q
}
for some $C=C(n,q)>0$.

Given any $M>0$, we denote
\eq{
S_M \coloneqq \sup \left\{ \abs{\int\<\phi,\varphi\>}: \, \norm{\phi}_{r^*}\leq 1, \norm{\phi}_{p^*}\leq M \right\},
}
where $r^*$ is defined by $\frac{1}{r}+\frac{1}{r^*}=1$, so is $p^*$.
It is clear that
\eq{
S_M \leq \norm{\varphi}_r \quad \hbox{and} \quad S_M\leq M\norm{\varphi}_p<\infty.
}
It suffices to prove that
\eq{\label{it_suffices}
S_M \leq C\norm{\varphi}_p, \quad\forall M>0,
}
for a constant $C>0$ independent of $M$.
In fact, since $L^{r^*}\cap L^{p^*}$ is dense in $L^{r^*}$, it follows $\norm{\varphi}_r \leq C(n,p,r)\norm{\varphi}_p$, which finishes the proof.

In order to prove \eqref{it_suffices}, we decompose
\eq{
\lambda A = A_0 + A_\epsilon
}
with
\eq{
\norm{A_0}_\infty \eqcolon C_0<\infty \quad\hbox{and}\quad \norm{A_\epsilon}_n\leq \epsilon
}
for small $\epsilon>0$.
It is possible since $\lambda A\in L^n$. By our assumption ${\rm R}_g>0$, $(M,g)$ does not admit harmonic spinors, hence $\D$ is invertible. Therefore, there exist $\varphi_0$ and $\varphi_\epsilon$ such that
\eq{
\D\varphi_0 = iA_0\cdot\varphi \quad\hbox{and}\quad \D\varphi_\epsilon = iA_\epsilon\cdot\varphi.
}
It follows
\eq{
\D\varphi = i\lambda A\cdot\varphi = i(A_0+A_\epsilon)\cdot\varphi = \D(\varphi_0+\varphi_\epsilon).
}
Hence
\eq{
\varphi = \varphi_0 + \varphi_\epsilon.
}
Given any test spinor $\phi$ satisfying
\eq{\label{test_spinor}
\norm{\phi}_{r^*}\leq 1 \quad\hbox{and}\quad \norm{\phi}_{p^*}\leq M.
}
On one hand, we have
\eq{\label{estimate_varphi_0}
\abs{\int\<\phi,\varphi_0\>} \leq \norm{\varphi_0}_r \leq C_1 \norm{A_0\cdot\varphi}_{\frac{nr}{n+r}} \leq C_2 \norm{A_0}_\infty\cdot\norm{\varphi}_p \leq C_3\norm{\varphi}_p,
}
where we have used \eqref{apriori_estimate}. Here we need $\frac{nr}{n+r}<p$, which is exactly our assumption for $r$.

On the other hand, we have to estimate $\int\<\phi,\varphi_\epsilon\>$. 
Denote
\eq{
\phi_\epsilon \coloneqq iA_\epsilon\cdot\D^{-1}\phi.
}
Note that
\eq{
\norm{\phi_\epsilon}_{r^*} \leq \norm{A_\epsilon}_n\cdot\norm{\D^{-1}\phi}_{\frac{nr^*}{n-r^*}} \leq C_4\epsilon\norm{\phi}_{r^*} \leq C_4\epsilon,
}
where we have used \eqref{apriori_estimate}. Moreover,
\eq{
\norm{\phi_\epsilon}_{p^*}\leq \norm{A_\epsilon}_n\cdot\norm{\D^{-1}\phi}_{\frac{np^*}{n-p^*}}\leq C_5\epsilon\norm{\phi}_{p^*}\leq C_5\epsilon M.
}
Let $C_6 \coloneqq \max\{C_4,C_5\}$. It implies that $(C_6\epsilon)^{-1}\phi_\epsilon$ is a well-defined test spinor satisfying \eqref{test_spinor}. Therefore
\eq{
\abs{\int\<(C_6\epsilon)^{-1}\phi_\epsilon,\varphi\>} \leq S_M
}
and hence
\eq{
\abs{\int\<\phi_\epsilon,\varphi\>} \leq C_6\epsilon S_M.
}
By definition we have
\eq{
\int\<\phi_\epsilon,\varphi\> = \int\<iA_\epsilon\cdot\D^{-1}\phi,\varphi\> = \int\<\D^{-1}\phi,iA_\epsilon\cdot\varphi\> = \int\<\phi,\D^{-1}(iA_\epsilon\cdot\varphi)\> = \int\<\phi,\varphi_\epsilon\>,
}
where we have used the fact that $\D^{-1}$ is self-adjoint. Hence
\eq{\label{estimate_varphi_epsilon}
\abs{\int\<\phi,\varphi_\epsilon\>} \leq C_6\epsilon S_M.
}
Combining \eqref{estimate_varphi_0} and \eqref{estimate_varphi_epsilon} and taking supreme over $\phi$, we have
\eq{
S_M \leq C_3\norm{\varphi}_p + C_6\epsilon S_M.
}
Each constant here depends only on $n,p,r$.
Choosing $\epsilon>0$ small enough completes the proof.
\end{proof}

\

\

\noindent{\sc Acknowledgment.}
M.Z. was supported by CSC of China (No. 202306270117).

\

\noindent{\bf Statements and Declarations} 

\

{\it Data availability}. 
No data was used for the research described in the article.

\

{\it Conflict of interest Statement.} The authors declare that there is no conflict of interest regarding the publication of the work.

\bibliographystyle{alpha}
\bibliography{BibTemplate.bib}

\end{document}